\newtheorem{theorem}{Theorem}
\newtheorem{lemma}[theorem]{Lemma}
\newtheorem{corollary}[theorem]{Corollary}
\newtheorem{proposition}[theorem]{Proposition}
\newtheorem{obs}[theorem]{Observation} \newtheorem{defi}[theorem]{Definition}
\newenvironment{definition}{\begin{defi}\rm}{\end{defi}}
\newtheorem{exa}[theorem]{Example}
\newenvironment{example}{\begin{exa}\rm}{\end{exa}}
\newtheorem{rem}[theorem]{Remark}
\newenvironment{remark}{\begin{rem}\rm}{\end{rem}}
\newtheorem{rems}[theorem]{Remarks}
\newtheorem{ack}[theorem]{Acknowlegment}
\def\bsq{\blacksquare\medskip}
\def\n{\noindent}
\def\H{\mathcal H}
\def\L{\mathcal L}
\def\K{\mathcal K}
\def\B{\mathcal B}
\def\supp{{\rm supp}}
\def\ZZ{{\mathbf Z}}
\def\CCC{{\mathbf C}}
\def\RRR{{\mathbf R}}
\def\QQ{\mathbf Q}
\def\RR+{{\mathbf R}^*}
\def\TT{\mathbf T}
\def\PP{\mathbf P}
\def\Q_p{{\mathbf Q}_p}
\def\ind{{\rm Ind}}
\def\Tr{{\rm Trace}}
\def\eps{\varepsilon}
\def\Ga{\Gamma}
\def\ga{\gamma}
\def\La{\Lambda}
\def\la{\lambda}
\def\vfi{\varphi}
\def\tous{\qquad\text{for all}\quad}
\def\bs{\backslash}
\def\nil{\La\backslash N}
\def\tor{\La [N,N]\backslash N}
\def\Aut{{\rm Aut}}
\def\Aff{{\rm Aff}}
\def\Affnil{{\rm Aff}(\nil)}
\def\Autnil{\Aut (\nil)}
\def\AffT{{\rm Aff}(T)}
\def\AutT{\Aut (T)}
\def\paut{p_{\rm a}}
\def\ind{{\rm Ind}}
\def\Ker{{\rm Ker}}
\def\Ad{{\rm Ad}}
\def\Tr{{\rm Tr}}
\def\hN{\widehat N}
\def\Utor{U_{\rm tor}}
\def\ZC{{\rm Zc}}
\def\supp{{\rm supp}}
\def\tout{\qquad\text{for all}\quad}
\begin{document}

\title{On the spectral theory of groups of affine transformations of compact nilmanifolds}
\author{Bachir Bekka and Yves Guivarc'h}

\date{\today}

\maketitle

\begin{abstract}
Let $N$ be a connected and simply connected nilpotent Lie group, $\La$  a lattice in $N$,
and $\nil$ the corresponding nilmanifold. 
Let $\Affnil$ be the group of affine transformations of $\nil.$

We   characterize the countable subgroups $H$ 
 of  $\Affnil$ for which  the action of $H$ on $\nil$  has a spectral gap,
 that is, such that  the associated
unitary representation $U^0$ of  $H$ on the space of functions from $L^2(\nil)$ with zero mean
does  not weakly contain the trivial representation.
Denote by $T$ the  maximal  torus factor  associated to  $\nil$. 
We show that the action of $H$  on $\nil$   has  a spectral gap
 if and only if there exists  no proper  $H$-invariant  subtorus  $S$ of $T$ such that the projection of  $H$ on $\Aut (T/S)$  has an abelian subgroup of finite index.
 
We first  establish the result  in the case where
$\nil$ is a  torus. In the case of a general nilmanifold,
we study  the asymptotic behaviour  of matrix coefficients  of 
 $U^0$ using decay properties of  metaplectic representations  of symplectic groups. 
The result shows that the existence of a spectral gap
for subgroups of $\Affnil$ is equivalent to strong ergodicity
in the sense  of K.~Schmidt.
Moreover, we   show that  the action of $H$   on $\nil$ 
is ergodic (or strongly mixing) if and only if the corresponding action of  $H$
on $T$ is ergodic (or strongly mixing).

\end{abstract}

\section{Introduction}
\label{S0}

Let $H$ be a countable group
 acting measurably 
on a probability space $(X,\nu)$  by measure preserving transformations.
Let $U: h\mapsto U(h)$ denote the corresponding Koopman   representation
of $H$ on $L^2(X,\nu)$. We say that 
the action of $H$ on $X$ has a spectral gap
if the restriction $U^0$  of  $U$ to the $H$-invariant subspace
$$
L^2_0(X,\nu)=\{\xi\in L^2(X,\nu) \ :\ \int_X \xi (x) d\nu (x)=0\}
$$
does not have almost invariant vectors,
that is, there is no sequence of unit vectors
 $\xi_n$  in  $ L^2_0(X,\nu)$  such
 that $\lim_n\Vert U^0(h)\xi_n-\xi_n\Vert=0$ for all
 $h\in H.$
A useful  equivalent condition for the existence of a spectral gap
is as follows. Let $\mu$ be a probability
measure on $H$ such that  the support of $\mu$
generates $H$.
Let $U^0(\mu)$ be the convolution operator  defined
on $L^2_0(X,\nu)$ by 
$$
U^0(\mu)\xi =\sum_{h\in H} \mu(h) U^0(h) \xi , \qquad \xi\in L^2_0(X,\nu).
$$
Observe that  we have $\Vert U^0(\mu) \Vert \leq 1$ 
and hence  $r(U^0(\mu)) \leq 1$  for the  spectral 
radius $r(U^0(\mu))$ of  $U^0(\mu)$.
Assume that  $\mu$ is aperiodic,
(that is, if  $\supp (\mu)$ is not contained in the coset of a  proper subgroup of $H$). Then the action of $H$ on $X$ has a spectral gap
if and only if $r(U^0(\mu))<1$ and this is 
 equivalent to  $\Vert U^0(\mu) \Vert<1$.

Ergodic theoretic applications of the existence of a spectral gap (or of the stable spectral gap; see below for the
definition) to random walks  
(such as the rate of $L^2$-convergence in the
random  ergodic theorem, pointwise ergodic theorem, analogues of the law of large numbers and of the central limit theorem, etc)
 are given in  \cite{CoGu2}, \cite{CoLe}, \cite{FuSh}, \cite{GoNe} and  \cite{Yves}.  Another application
of the spectral gap property is the uniqueness of $\nu$ as $H$-invariant mean
on $L^\infty(X,\nu);$ for this as well as for further
applications, see \cite{BHV}, \cite{Lubotzky}, \cite{Popa}, \cite{Sar}.

Recall that a factor $(Y,m,H)$ of  the system $(X, \nu,H)$ is a 
probability space $(Y,m)$ equipped with an $H$-action 
by measure preserving transformations
together with a   $H$-equivariant mesurable  mapping $\Phi: X\to Y$
with $\Phi_*(\nu)=m.$   Observe that $L^2(Y,m)$
can be identified with a $H$-invariant closed subspace of 
$L^2(X,\nu).$

By a result proved in \cite[Theorem 2.4]{JuRo},
 no  action of  a countable amenable group  by measure preserving transformations on a 
 non-atomic probability space  has a spectral gap.
As a consequence,  if there exists
a non-atomic factor $(Y,m, H)$  of  the system $(X, \nu,H)$
such that $H$ acts as an amenable group on $Y,$
then  the action of $H$ on $X$ has no spectral gap.
Our  main result (Theorem~\ref{Theo1}) shows in particular 
that this is the only obstruction for the existence of a spectral gap 
when $H$ is a countable group of affine transformations of a compact 
nilmanifold $X$.

Let $N$ be a connected and simply connected nilpotent Lie group.
Let $\La$ be a lattice in $N;$ the associated nilmanifold
 $\nil$ is known to be compact.
The group $N$ acts by right translations on
$\nil:$  every $n\in N$ defines a transformation
$\rho(n)$ on $\nil$ given by $\La x\mapsto \La xn.$
Denote by $\Aut(N)$  the group  of  continuous automorphisms of $N$
and by  $\Aut (\nil)$ the subgroup
of continuous automorphisms  $\varphi$ of $N$ such that $\varphi (\La) =\La.$
The group $\Aut(N)$ is   a linear algebraic group
defined  over $\mathbf Q$ and  $\Aut (\nil)$
is a discrete subgroup of $\Aut(N).$
An affine transformation of $\nil$  is a
mapping $\nil\to \nil$ of the form $\vfi \circ \rho(n)$
for some $\vfi\in \Aut(\nil)$ and $n\in N.$
The group $\Affnil$ of affine transformations of $\nil$
is the semi-direct product $\Aut (\nil)\ltimes N.$

Every $g\in \Affnil$  
preserves the translation invariant probability
measure $\nu_{\nil}$ induced by a Haar measure
on $N.$  The action of   $\Affnil$  on $\nil$ is a  natural
generalization of  the action  of $SL_n(\mathbf Z)\ltimes \TT ^n$
on the torus $\TT ^n=\RRR^n/\ZZ^n.$
In fact,  let $T=\tor$ be the maximal
torus factor of $\nil.$ Then the nilsystem $(\nil, H)$  can be viewed
as  the result, starting with  $T,$  of a finite sequence 
of extensions by tori, with induced actions of $H$ on  every stage.

Actions of  of higher rank lattices by affine transformations
on nilmanifolds arise  in Zimmer's programme
 as one  of the standard actions for such groups
(see the survey \cite{Fisher}).
 The action of a single  affine transformation (or a flow of 
such transformations) on  a nilmanifold have been
studied by W.~Parry from the  ergodic, spectral or topological 
point of view (see \cite{Parry1},\cite{Parry2},\cite{Parry3}; see
also \cite{AGH} for the case of  translations).

Let  $V$ be a finite
dimensional real vector space
and  $\Delta$ a lattice in $V.$
As is well-known, $T=V/\Delta$ is a torus and $\Delta$ defines a 
rational structure on $V.$ Let $W$ be a
rational linear subspace of $V$. 
Then  $S= W/ (W\cap \Delta)$ is a subtorus  of $T$ 
and we have a torus factor  $\overline T= T/S.$ 
 Let $H$ be a subgroup
of $\Aff (T)$ and assume that $W$ is invariant
under $\paut(H)$, where   $\paut:\Affnil\to \Autnil$
is the canonical projection.
Then  $H$  leaves  $S$  invariant and 
the induced action of $H$ on $\overline T$ is a  factor of 
the action of $H$ on $T.$ 
We will say that $\overline T$ is an 
$H$-invariant factor torus of $T.$   Here is our main result.

\begin{theorem}
\label{Theo1}
Let $\nil$ be a compact  nilmanifold  
with associated maximal torus factor $T.$ 
Let  $H$ be a countable subgroup $\Affnil$.
 The following properties are equivalent:
\begin{itemize}
 \item [(i)] The action of $H$ on $\nil$  has a spectral gap.
 \item[(ii)] The action of  $H$  on  $T$  has a spectral gap.
\item [(iii)] There exists no  non-trivial  $H$-invariant factor torus   $\overline T$ of $T$    such that
the projection of  $\paut (H)$ on $\Aut(\overline T)$ is a virtually abelian group (that is,  it  contains an abelian subgroup of finite index).
\end{itemize}
\end{theorem}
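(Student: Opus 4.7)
The plan is the cyclic chain (i) $\Rightarrow$ (ii) $\Rightarrow$ (iii) $\Rightarrow$ (i). Two of these are essentially formal. The implication (i) $\Rightarrow$ (ii) follows because the $H$-equivariant projection $\nil\to T$ realises $L^2_0(T)$ as an $H$-invariant closed subspace of $L^2_0(\nil)$, so almost invariant vectors transfer. For (ii) $\Rightarrow$ (iii) I would contrapose: if $\overline T$ is a non-trivial $H$-invariant factor torus of $T$ on which $\paut(H)$ has virtually abelian image in $\Aut(\overline T)$, then the image of $H$ in $\Aff(\overline T)$ is a semidirect product of a virtually abelian group by the abelian translation group $\overline T$, hence virtually metabelian and in particular amenable. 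The theorem of \cite{JuRo} cited in the introduction then denies this action on the non-atomic space $\overline T$ a spectral gap, and this failure propagates to $T$ via the $H$-equivariant inclusion $L^2_0(\overline T)\hookrightarrow L^2_0(T)$.

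The substantive direction is (iii) $\Rightarrow$ (i), which I would handle in two stages. First I treat the torus case and prove (iii) $\Rightarrow$ (ii) under the assumption $\nil=T$. Writing $T=V/\Delta$, decompose $L^2_0(T)$ over $\hat T\setminus\{0\}\simeq\Delta^*\setminus\{0\}$ and note that $g=(\vfi,v)\in\Aff(T)$ acts on $\chi\in\hat T$ by $\chi\mapsto\chi(v)(\chi\circ\vfi^{-1})$. An almost invariant sequence $\xi_n\in L^2_0(T)$ then yields, via squared Fourier coefficients, an asymptotically $\paut(H)$-invariant sequence of probability measures on $\Delta^*\setminus\{0\}$ for the dual linear action. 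Using the rational structure on $\Delta^*$ together with a Tits-alternative / Zariski-closure analysis of $\paut(H)\subset GL(\Delta)$, the plan is to extract from such a sequence a proper $\paut(H)$-invariant rational subspace $W\subset V$ on which $\paut(H)$ has virtually abelian image modulo $W$; the factor torus $T/(W/(W\cap\Delta))$ then contradicts (iii).

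To pass from (ii) to (i) on a general nilmanifold I would induct on the nilpotency class of $N$, using the central series $1=Z_0\subset\cdots\subset Z_s=N$ to present $\nil$ as a tower of torus extensions terminating at $T$. At each level, Fourier decomposition along the characters of the central torus fibre splits off the trivial-character piece (which is $L^2$ of the previous level, handled by induction) from the non-trivial character pieces, which by Kirillov theory form sums of induced Schr\"odinger-type irreducibles of $N$. For each non-trivial central character, a finite-index subgroup of $H$ fixes the character and acts on the corresponding component, modulo translations, through the Weil metaplectic representation of the symplectic group attached to the commutator pairing on the relevant central quotient. Howe's decay estimates for matrix coefficients of metaplectic representations then deliver a uniform spectral gap on each such piece, and assembling the pieces with the torus case at the base yields (i).

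The hard part is the first stage, the torus case (iii) $\Rightarrow$ (ii): extracting $W$ from almost invariant Fourier data is what forces the virtual-abelianness clause in (iii), and demands a careful interplay between the orbit structure of $\paut(H)$ on $\Delta^*$ and its $\QQ$-algebraic structure inside $GL(\Delta)$. By contrast, the metaplectic lifting is technically heavier but largely an assembly of Kirillov's orbit description with Howe's decay estimates for the metaplectic representation.
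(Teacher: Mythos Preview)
Your cyclic scheme and the two formal implications are correct and match the paper. The torus step (iii)$\Rightarrow$(ii) is also in the right spirit: the paper passes from almost invariant vectors to an invariant \emph{mean} on $\widehat T\setminus\{1_T\}$, pushes it to $\mathbf P(V)$, and invokes Furstenberg's lemma on stabilizers of measures on projective spaces (rather than the Tits alternative) to produce a canonical subspace $V(H)$ on which $\paut(H)$ acts amenably; a separate integrality argument (eigenvalues of modulus~$1$ in $GL_d(\ZZ)$ are roots of unity of bounded order) then upgrades amenable to virtually abelian. Your outline is close enough that I would call it the same approach, modulo the precise tool.

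The genuine gap is in the passage (ii)$\Rightarrow$(i) on a general nilmanifold. You write that ``for each non-trivial central character, a finite-index subgroup of $H$ fixes the character.'' This is false: $\paut(H)$ acts on the dual of the central torus $Z(N)/(Z(N)\cap\La)$ and the orbits can be infinite (already for a two-step $N$ with $\dim Z(N)\ge 2$). When the stabilizer $\Ga_\pi\subset\Ga=\paut(H)$ has infinite index, the metaplectic decay controls only the $\Ga_\pi\ltimes N$-representation, and Herz's majoration for the induction to $\Ga\ltimes N$ gives only $\Vert U_\pi(\mu)\Vert\le\Vert\la_{\Ga/\Ga_\pi}(\paut(\mu))\Vert$, which need not be bounded away from~$1$. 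Your induction on nilpotency class does not address this.

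The paper's fix is to induct instead on $\dim\ZC(\Ga)$ in $\Aut(N)$. It decomposes $L^2(T)^\perp$ into $N$-isotypical components $\H_\pi$ and separates the $\pi$ with $[\Ga:\Ga_\pi]<\infty$ from those with $[\Ga:\Ga_\pi]=\infty$. For the latter, since stabilizers of irreducible representations are Zariski closed (Proposition~\ref{Prop-StabAlg}), one has $\dim\ZC(\Ga_\pi)<\dim\ZC(\Ga)$, and the induction hypothesis applied to $\Ga_\pi$ together with continuity of induction feeds back a bound by $\Vert U_{\rm tor}(\paut(\mu))\Vert$. For the finite-index pieces, the metaplectic decay is not used to produce a spectral gap directly but, via a tensor-power trick and a careful identification of the projective kernel with a subgroup fixing some $\chi\in\widehat T\setminus\{1_T\}$, to prove the uniform comparison $\Vert U_\pi(\mu)\Vert\le\Vert U_{\rm tor}(\paut(\mu))\Vert^{1/2k}$ (Proposition~\ref{Pro-FixAut}). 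Both halves of this dichotomy, and the Zariski-dimension induction that organizes them, are missing from your sketch.
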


To give an  an example, let  $T= \RRR^d/\ZZ^d$ be the $d$-dimensional torus.
Observe that   $\Aut (T)$ can be identified with $GL_d(\ZZ)$.
Let $H$ be a subgroup of $\Aff(T)=GL_d(\ZZ)\ltimes T$.
Assume that $\paut(H)$ is not virtually abelian and that 
$\paut(H)$ acts   $\QQ$-irreducibly  on $\RRR^d$
(that is, there is no non-trivial  $\paut(H)$-invariant rational subspace
of $\RRR^d$). Then the action of $H$ on $T$ has a spectral gap.
For more details, see Corollary~\ref{Cor-QIrred-Torus}
and Example~\ref{Exa1} below.

The result above is new even in the case where $\nil$ is a torus;
see however \cite[Theorem~6.5.ii]{FuSh} for a sufficient condition
for the existence of a spectral gap for groups of torus automorphisms.
 Our results  shows, in particular, that
 the spectral  gap property for a countable subgroup
 $H$ of $\Affnil$ is equivalent to the spectral gap
 property for its automorphism part $\paut (H).$  
   
The proof of Theorem~\ref{Theo1} breaks into  two parts. We first  establish
 the result in the case  where $\nil$ is a torus (see Theorem~\ref{Theo3} below ). 
 Our proof  is based here on
 the  existence of appropriate invariant means
 on finite dimensional vector spaces.  A crucial tool 
 will be  (a version of) Furstenberg's result
 on stabilizers of probability measures on projective spaces over local fields.
  In the case of a general nilmanifold
 $\nil$ with associated  maximal torus factor $T,$ 
we  show that (ii) implies (i) by studying
  the asymptotic behaviour  of matrix coefficients  of 
the Koopman representation $U$ of  $H$ 
restricted to the orthogonal   complement of $L^2(T)$ in
 $L^2(\nil)$; for this, we will use 
 decay properties of the metaplectic representation 
 of symplectic groups due to R.~Howe and C. C.Moore  \cite{HoMo}.
The equivalence of (i) and (ii) was proved
in \cite{BeHe}  in the special case
of a group of automorphisms of Heisenberg nilmanifolds.

Actions of countable amenable groups on a 
 non-atomic probability space fail to 
 have a  property which is weaker 
 than the spectral gap property. Recall that the action of a  countable group $H$
 by measure preserving transformations on a probability space $(X, \nu)$
 is said to be \emph{strongly ergodic} in Schmidt's sense
 (see \cite{Schmidt1}, \cite{Schmidt2}) if every
 sequence $(A_n)_n$ of measurable subsets of  $X$ which is asymptotically invariant
 (that is,  which is such that  $\lim_n\nu(g A_n \bigtriangleup A_n)=0$ for all $g\in H$)
is trivial (that is, $\lim_n\nu( A_n)(1-\nu(A_n))=0$).
It is easy to see that if the action
of $H$ on $X$ has a spectral gap, then the action is
strongly ergodic (see, for instance, \cite[Proposition 6.3.2]{BHV}).
The converse does not hold in general (see Example (2.7) in  \cite{Schmidt2}).
As shown in \cite{Schmidt2}, no action of a countable {amenable}
 group by measure preserving transformations  on a non-atomic, 
  probability space can be  strongly ergodic.

An interesting feature
of  strong ergodicity (as opposed to the spectral gap property) is that this notion
only depends on the equivalence relation
on $X$ defined by the partition of $X$  into $H$-orbits.
Our result shows that the existence of a spectral gap
for subgroups of $\Affnil$ is equivalent to strong ergodicity.

\begin{corollary}
\label{Cor1}
The action of a countable subgroup
of $\Affnil$ on a  compact  nilmanifold $\nil$
 has a spectral gap if and only if it  is strongly ergodic.
\end{corollary}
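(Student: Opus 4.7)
The plan is to use Theorem~\ref{Theo1} together with Schmidt's theorem on amenable actions to handle the non-trivial direction, and to dispose of the other direction by a general fact. Indeed, that the existence of a spectral gap implies strong ergodicity for any countable group action is already recalled in the excerpt (\cite[Proposition~6.3.2]{BHV}), so I would simply invoke it and focus on proving the converse by contrapositive: assuming the action of $H$ on $\nil$ has no spectral gap, I would show it is not strongly ergodic.

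By Theorem~\ref{Theo1}, the failure of the spectral gap provides a non-trivial $H$-invariant factor torus $\overline T$ of $T$ such that the image of $\paut(H)$ in $\Aut(\overline T)$ is virtually abelian. Let $\overline H$ denote the image of $H$ in $\Aff(\overline T)=\Aut(\overline T)\ltimes \overline T$. Then $\overline H$ is an extension of a virtually abelian group by the abelian group $\overline T$; since extensions of amenable groups by amenable groups are amenable, $\overline H$ is a countable amenable group. It acts by measure preserving transformations on the non-atomic probability space $(\overline T,\nu_{\overline T})$, where $\nu_{\overline T}$ is the normalized Haar measure.

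By Schmidt's theorem \cite{Schmidt2}, no such action can be strongly ergodic, so there exists a sequence $(A_n)$ of Borel subsets of $\overline T$ which is asymptotically $\overline H$-invariant and satisfies $\inf_n \nu_{\overline T}(A_n)(1-\nu_{\overline T}(A_n))>0$. Let $\Phi:\nil\to \overline T$ be the $H$-equivariant factor map, so that $\Phi_*(\nu_{\nil})=\nu_{\overline T}$. Setting $B_n=\Phi^{-1}(A_n)$, the $H$-equivariance of $\Phi$ shows that $(B_n)$ is asymptotically $H$-invariant on $\nil$, while the identity $\nu_{\nil}(B_n)=\nu_{\overline T}(A_n)$ keeps it non-trivial. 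Thus the $H$-action on $\nil$ is not strongly ergodic, completing the contrapositive.

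All the substance is carried by Theorem~\ref{Theo1}, so there is no real obstacle; one only has to keep track of the two routine points that an extension of an amenable group by an amenable group is amenable (so that Schmidt's theorem applies to $\overline H$), and that both asymptotic invariance and non-triviality of a sequence of sets transfer from a factor back to the total space along the $H$-equivariant, measure preserving map $\Phi$.
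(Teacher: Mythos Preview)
Your proof is correct and follows exactly the approach implicit in the paper: the corollary is stated there without a written-out proof, but is presented as an immediate consequence of Theorem~\ref{Theo1} combined with Schmidt's result \cite{Schmidt2} that amenable group actions on non-atomic spaces are never strongly ergodic, together with the standard fact that spectral gap implies strong ergodicity. You have simply filled in the routine details of this deduction.

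One small imprecision worth tightening: you write that $\overline H$ is ``an extension of a virtually abelian group by the abelian group $\overline T$'', but in fact the kernel of $\paut|_{\overline H}$ is $\overline H\cap\overline T$, which may be a proper subgroup of $\overline T$. This does not affect the argument, since $\overline H\cap\overline T$ is still abelian and hence $\overline H$ is amenable as an extension of a virtually abelian group by an abelian one.
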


We suspect that the previous corollary is true
for every countable group  of affine transformations  of
the quotient of a Lie group  by  a lattice. In fact,
the following stronger statement could be true.
Let $G$  be  a connected  Lie group
and $\Ga$ a lattice of $G$.  Let 
$H$ be a countable subgroup of  ${\rm Aff}(\Ga\bs G)$.
Assume that  the action of $H$ on $\Ga\bs G$
 does not have  a spectral gap. Is it true that 
 there exists a non-trivial $H$-invariant factor  $\overline{\Ga}\backslash\overline{G}$
 of $\Ga\bs G$ such that the 
 closure of the projection of $H$ on ${\rm Aff}(\overline{\Ga}\backslash\overline{G})$
 is an amenable group?
 
 As our result shows, this  is indeed the case
 if $G$  is a nilpotent Lie group; it  is also the case
 if $G$  is a simple non-compact Lie group 
 with finite centre  (see Theorem 6.10 in \cite{FuSh}).
It is worth mentioning   that the corresponding  statement in the framework
 of countable standard equivalence relations 
 has been proved in  \cite{JoSch}.

 Let again $H$ be a  countable group acting 
 by measure preserving transformations on a probability space $(X,\nu).$ 
 The following useful  strengthening of the spectral gap property has been considered
 by several  authors (\cite{Bekka}, \cite{BeGui}, \cite{FuSh}, \cite{Popa}). 
 Following  \cite{Popa}, let us say that the  action of $H$ has a \emph{stable spectral gap}
 if the diagonal  action of $H$ on $(X\times X, \nu\otimes\nu)$ has
 a spectral gap (see Lemma 3.2 in \cite{Popa} for
 the rationale of this terminology).  The following result is an immediate
 consequence of Theorem~\ref{Theo1} above and 
 of the corresponding result for    groups of torus automorphisms 
obtained in \cite[Theorem 6.4]{FuSh}.

\begin{corollary}
\label{Cor2}
If the action of a countable subgroup
of $\Affnil$ on a  compact  nilmanifold $\nil$
 has a spectral gap, then  it  is  has 
 stable spectral gap.
 \end{corollary}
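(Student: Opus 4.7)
The plan is to reduce the stable spectral gap problem for $\nil$ to the corresponding problem on the maximal torus factor $T$, where the result is already known, and then apply Theorem~\ref{Theo1} a second time to lift it back to $\nil\times\nil$. The key observation is that $\nil\times\nil$ is itself a compact nilmanifold: it is the quotient $(\La\times\La)\bs (N\times N)$, and since $[N\times N,N\times N]=[N,N]\times[N,N]$, its maximal torus factor is precisely $T\times T$. Moreover, if $H$ is a countable subgroup of $\Affnil$, then the diagonal embedding realizes $H$ as a subgroup of $\Aff(\nil\times\nil)$, and the automorphism part $\paut$ of its diagonal image is the diagonal of $\paut(H)\subset\Autnil$.

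First I would apply Theorem~\ref{Theo1} to $H$ acting on $\nil$: the hypothesis that this action has a spectral gap is equivalent, by (i)$\Leftrightarrow$(ii), to the spectral gap of $H$ (hence, via the equivalence with condition (iii), of $\paut(H)$) acting on $T$. Next I would invoke \cite[Theorem 6.4]{FuSh}, which asserts that for countable groups of torus automorphisms, spectral gap implies stable spectral gap; this is applied to $\paut(H)\subset\Aut(T)$ and yields that the diagonal action of $\paut(H)$ on $T\times T$ has a spectral gap. Since condition (iii) of Theorem~\ref{Theo1} depends only on the automorphism part, the diagonal action of $H$ on $T\times T$ equivalently has a spectral gap.

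Finally, I would apply Theorem~\ref{Theo1} a second time, this time to the diagonal subgroup $H\subset\Aff(\nil\times\nil)$ acting on the nilmanifold $\nil\times\nil$ with maximal torus factor $T\times T$: by (ii)$\Rightarrow$(i), the spectral gap on $T\times T$ we just established lifts to a spectral gap on $\nil\times\nil$. Unfolding definitions, this is exactly the statement that the action of $H$ on $\nil$ has a stable spectral gap.

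The proof is short because the hard work is already in Theorem~\ref{Theo1} and in \cite[Theorem 6.4]{FuSh}. The only non-trivial point to verify is the compatibility observation that enables the two applications of Theorem~\ref{Theo1}, namely that $\nil\times\nil$ is a nilmanifold whose maximal torus factor is $T\times T$ and that the diagonal action of $H$ lies inside $\Aff(\nil\times\nil)$ with automorphism part equal to the diagonal of $\paut(H)$; everything else is a formal concatenation of equivalences.
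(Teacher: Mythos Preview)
Your proposal is correct and follows exactly the approach the paper indicates: the paper states that Corollary~\ref{Cor2} is an immediate consequence of Theorem~\ref{Theo1} together with \cite[Theorem 6.4]{FuSh}, and your argument spells out precisely this reduction via the observation that $\nil\times\nil$ is a nilmanifold with maximal torus factor $T\times T$. The only detail you add beyond the paper's one-line justification is the explicit passage through $\paut(H)$ to invoke \cite{FuSh} (which concerns automorphisms rather than affine transformations), and this is handled correctly via condition~(iii) of Theorem~\ref{Theo1}.
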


Next, we turn to the question of the ergodicity or mixing 
of  the action of a (not necessarily countable) subgroup $H$
of $\Affnil$  on $\nil$.
As a consequence of our methods, 
we will see that this  reduces to the same question for the
action of   $H$ on the associated torus. 

Recall that an action of a  group $H$  on a probability space $(X,\nu)$ is  weakly mixing if the Koopman representation $U$ of $H$ on $L^2(X,\nu)$
has no finite dimensional subrepresentation, and that the action of 
of a countable group $H$
is strongly mixing if the matrix coefficients $g\mapsto \langle U (g)\xi, \eta\rangle$ vanish at infinity for all $\xi,\eta\in L^2_0(X,\nu).$
\begin{theorem}
\label{Theo2} 
Let $H$ be a   group of affine transformations of the compact nilmanilfold
 $\nil.$   Let $T$ be the  maximal $T$ torus factor  associated to  $\nil.$
\begin{itemize}
 \item [(i)] If the action of  $H$ on $T$ is ergodic (or weakly mixing), then  its
action on $\nil$ is ergodic (or weakly mixing).
 \item [(ii)] Assume that $H$ is as subgroup of $\Autnil.$ If the action of  $H$ on $T$ is strongly mixing, then  its action  on $\nil$ is strongly mixing.
\end{itemize}
\end{theorem}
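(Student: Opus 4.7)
The plan is to exploit the canonical $H$-equivariant fibration $\pi:\nil \to T$ arising from the projection $N \to N/[N,N]$ (which descends to $\nil$ because every automorphism of $N$ preserves $[N,N]$ and every right translation descends to $T$). Pulling back gives an $H$-invariant orthogonal decomposition $L^2(\nil) = \pi^*(L^2(T)) \oplus \mathcal{H}$. The Koopman representation on $L^2_0(\nil)$ is ergodic (respectively weakly mixing, strongly mixing) precisely when its restrictions to both $L^2_0(T)$ and $\mathcal{H}$ are, so both parts of Theorem~\ref{Theo2} reduce to intrinsic statements about $\mathcal{H}$: (a) for $H \subseteq \Affnil$, the Koopman representation on $\mathcal{H}$ contains no nonzero finite-dimensional $H$-subrepresentation (covering part (i)); and (b) when $H \subseteq \Autnil$, its matrix coefficients on $\mathcal{H}$ vanish at infinity in $H$ (covering part (ii)).

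I would next induct on the nilpotency class of $N$ along the lower central series $N = N_0 \supset N_1 \supset \cdots \supset N_s = \{e\}$. Each step $Y_k \to Y_{k-1}$ of the $H$-equivariant tower $Y_s = \nil \to Y_{s-1} \to \cdots \to Y_1 = T$, with $Y_k := (\La/\La \cap N_k)\backslash (N/N_k)$, is a principal torus bundle whose fiber is a central torus, so it suffices to prove (a) and (b) for a single central torus extension $\nil\to\nil'$ with fiber $(\La \cap Z_0)\backslash Z_0$, $Z_0 \subseteq Z(N)$. For such an extension, decompose
\[
L^2(\nil) \ominus L^2(\nil') \;=\; \bigoplus_{\chi\neq 1} L^2(\nil)_\chi
\]
into $\chi$-isotypic subspaces for the right action of $Z_0$. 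The group $H$ permutes these components through the action of $\paut(H)$ on the discrete dual $\widehat{(\La\cap Z_0)\backslash Z_0}$, while translations preserve each component. A standard square-summability argument rules out contributions from $\chi$'s with infinite $H$-orbit to both finite-dimensional $H$-invariant subspaces and to non-vanishing matrix coefficients at infinity, reducing the problem to $\chi$'s with finite orbit and to the finite-index stabilizer $H_\chi$ acting on a single $L^2(\nil)_\chi$.

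The decisive input, shared with the proof of Theorem~\ref{Theo1}, is that by Kirillov--Mackey theory $L^2(\nil)_\chi$ is a finite multiple of an irreducible infinite-dimensional unitary representation of $N$ with central character $\chi$, and the action of $N \rtimes \Aut(N)_\chi$ on this space is projectively equivalent to the Schr\"odinger--Weil metaplectic representation of the symplectic group attached to the Heisenberg quotient $N/\ker(\chi)$. Howe and Moore's theorem~\cite{HoMo} applied to this metaplectic representation implies that its matrix coefficients vanish at infinity in the symplectic group, and in particular that it admits no nonzero finite-dimensional subrepresentation; this delivers (a). For (b), one decomposes a sequence $h_n \to \infty$ in $H_\chi\subseteq\Autnil$ into a component escaping in $\Aut(T)$ (handled inductively by the strong-mixing hypothesis on $T$) and a component escaping in the symplectic direction (handled by Howe--Moore). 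The main obstacle I anticipate lies precisely in this last step: identifying cleanly the $\Aut(\nil)_\chi$-action on $L^2(\nil)_\chi$ with a subgroup of an appropriate metaplectic group, and showing that unboundedness of $(h_n)$ in $H_\chi$ forces unboundedness either in $\Aut(T)$ or in the symplectic factor --- essentially the same technical heart that drives the proof of Theorem~\ref{Theo1}.
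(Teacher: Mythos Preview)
Your overall architecture --- the decomposition $L^2(\nil)=L^2(T)\oplus\H$, an induction along a central tower, and an appeal to Howe--Moore decay for the metaplectic extension --- is in the right spirit, but there are two genuine gaps.

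First, the structural claim that ``$L^2(\nil)_\chi$ is a finite multiple of an irreducible infinite-dimensional representation of $N$'' and that ``$N/\ker(\chi)$ is a Heisenberg group'' is false beyond the Heisenberg case. Already for the free $2$-step group $N=N_{3,2}$ (Example~\ref{Exo-Libre}) the centre is $[N,N]\cong\RRR^3$, and for a nontrivial central character $\chi=\chi_{Y_0}$ the alternating form $(X_1,X_2)\mapsto\chi([X_1,X_2])$ on $N/[N,N]\cong\RRR^3$ has radical $\RRR Y_0$; hence $N/\ker\chi\cong H_3\times\RRR$ and $L^2(\nil)_\chi$ decomposes into the infinitely many inequivalent irreducibles $\pi_{\la_0,Y_0}$. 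In the inductive step for higher nilpotency class the situation is worse: $N/N_k$ is $k$-step, so $N/\ker\chi$ is not even $2$-step. The paper avoids this by working directly with the $N$-isotypical decomposition $\H=\bigoplus_\pi\H_\pi$ via Kirillov theory (Section~\ref{S7}) rather than with central characters.

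Second, and more fundamentally, your argument for~(a) never invokes the hypothesis that $H$ acts ergodically on $T$, yet~(a) is plainly false without it (take $H$ finite, or $H$ generated by a single central translation). Howe--Moore says the matrix coefficients of $\widetilde\pi$ vanish at infinity in $G_\pi$ \emph{modulo the projective kernel} $P_{\widetilde\pi}$; this yields nothing about a subgroup $H_\chi$ whose image in $G_\pi/P_{\widetilde\pi}$ is bounded. The missing step --- which is exactly the content of the paper's proof of~(i) --- is: if $\H_\pi$ carries a nonzero $H$-fixed vector, then the strong $L^p$ estimate forces $H$ to lie, up to finite index, inside $P_{\widetilde\pi}\cap(\Ga_\pi\ltimes N)$; Proposition~\ref{Pro-ProjKernNilman} then identifies this group concretely and shows it acts trivially on a nontrivial factor torus of $T$, contradicting ergodicity on $T$. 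Your sketch for~(b) already senses this difficulty, but the clean mechanism the paper uses is the same: strong mixing on $T$ forces the projective kernel restricted to $H_\pi\subset\Autnil$ to be \emph{finite}, after which Corollary~\ref{Cor-DecayRatRep} gives $c_0$ matrix coefficients directly, without any dichotomy on directions of escape.
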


Part (i) of the  previous theorem has been independently established  in  \cite{CoGu2}) with a different method 
of proof.  In the case of a single  affine transformation
 (that is, in the case of $H=\ZZ),$   the result  is due  
 to W.Parry (see \cite{Parry1}, \cite{Parry2}).
  Also,  \cite{CoGu2} gives  an example
of a group of automorphisms $H$
acting ergodically  on a nilmanilfold $\nil$
for which no single automorphism from $H$ acts ergodically
on $\nil,$ showing that the previous theorem
does not follow from Parry's result.

Sections 1-7 are devoted to the proof 
our  main result Theorem~\ref{Theo1}
in the case where $\nil$ is a torus.
The  proof of the extension to general nilmanifold
is given in  Sections 8-14.
Theorem~\ref{Theo2} is treated in Section~15.

\medskip
\noindent
\textbf{Acknowlegments} We are grateful to  J-P.~Conze, A.~Furman, and A.~Gamburd 
for useful discussions.

\section{Spectral gap  property for groups of affine transformations of a torus: statement of the main result}
\label{S1}

Let $V$ be a  finite dimensional  real vector space of dimension $d\geq 1$ and let 
$\Delta$  be a lattice in $V.$ Let $T$ be the torus
$T=V/\Delta.$  The group of affine transformations of $T$
is  the semi-direct product $\Aff(T)=\Aut (T)\ltimes T$.

The aim of this section is to state the following result,
which will be proved in the next two sections. Recall that $\paut$
denotes the canonical homomorphism  $ \AffT\to \AutT$.

\begin{theorem}
\label{Theo3}
Let  $H$ be a countable subgroup of $\AffT.$
The following properties are equivalent.
 The following properties are equivalent:
\begin{itemize}
 \item [(i)] The action of $H$ on $T$ does not have a spectral gap.
\item [(ii)] There exists a  non-trivial $H$-invariant factor torus $\overline T$
such that the projection  of  $\paut(H) $ on $\Aut (\overline T)$  is 
amenable.
\item [(iii)] There exists a  non-trivial $H$-invariant factor torus $\overline T_0$  such that
the projection  of  $\paut(H)$ on $\Aut(\overline T_0)$  is 
virtually abelian.
\end{itemize}

\end{theorem}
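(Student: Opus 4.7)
The implications $(iii)\Rightarrow(ii)$ and $(ii)\Rightarrow(i)$ are the easy directions. The first is immediate since virtually abelian groups are amenable. For the second, let $\overline T$ be a non-trivial $H$-invariant factor torus on which the projection $q$ of $\paut(H)$ is amenable. The image of $H$ in $\Aff(\overline T)$ is an extension of the countable amenable group $q(\paut(H))$ by a subgroup of the abelian group $\overline T$, hence is amenable. By \cite[Theorem 2.4]{JuRo}, the action of this countable amenable group on the non-atomic space $\overline T$ has no spectral gap, and since $L^2_0(\overline T)$ embeds $H$-equivariantly in $L^2_0(T)$, the action of $H$ on $T$ inherits the failure of a spectral gap.

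The content of the theorem is $(i)\Rightarrow(iii)$. Pontryagin duality identifies the non-trivial characters of $T$ with $\Delta^*\setminus\{0\}\subset V^*$ and decomposes $L^2_0(T)=\bigoplus_{\chi\in\Delta^*\setminus\{0\}}\CCC\chi$. The automorphism part $\paut(h)$ permutes the character lines via its contragredient action on $\Delta^*$, while a translation acts diagonally by phases. A sequence $\xi_n=\sum_\chi c_n(\chi)\chi$ of almost invariant unit vectors yields probability measures $\mu_n=\sum_\chi|c_n(\chi)|^2\delta_\chi$ on $\Delta^*\setminus\{0\}$ that become asymptotically $\paut(H)$-invariant, with the additional feature that the phase coefficients are asymptotically aligned with the translation components of $H$.

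The plan is to push these measures to projective space and extract a limit. For each completion $k$ of $\QQ$ relevant to the integer structure carried by $\Delta^*$ and by a generating set of $\paut(H)$, the lattice $\Delta^*$ sits inside $V^*\otimes k$, and after radial normalization one obtains a probability measure on the compact space $P(V^*\otimes k)$. Passing to a weak-$*$ limit across a chosen finite set of places yields a probability measure $\mu$ on a product of local projective spaces, invariant under $\paut(H)$. A version of Furstenberg's theorem on stabilizers of measures on projective spaces over local fields then produces a dichotomy: either (a) the support of $\mu$ projects into the projectivization of a proper rational subspace $W\subsetneq V^*$, or (b) the Zariski closure of $\paut(H)$ in $GL(V^*)$ is, up to finite index, a torus, so that $\paut(H)$ is virtually abelian and conclusion $(iii)$ holds on $\overline T=T$ itself.

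In case (a), the annihilator $W^\perp\subset V$ is a proper non-zero rational subspace stable under $\paut(H)$, producing the non-trivial $H$-invariant subtorus $S=W^\perp/(W^\perp\cap\Delta)$ and the non-trivial $H$-invariant factor torus $\overline T=T/S$ whose character group is $W\cap\Delta^*$. One separates the almost invariant vectors into the part supported on characters of $\overline T$ and the part supported on characters non-trivial on $S$; at least one of these parts remains almost invariant and exhibits failure of the spectral gap either for the action of $H$ on $\overline T$ (a strictly smaller-dimensional torus) or for the action of $H$ on $S$ (again strictly smaller). Induction on $\dim T$ completes the argument. The main obstacle I expect is twofold: first, ensuring that the weak-$*$ limit $\mu$ on the product of projective spaces is non-trivial and not concentrated on a finite $\paut(H)$-orbit (which would itself quickly yield the virtually abelian conclusion via the stabilizer of a character); and second, upgrading Furstenberg's dichotomy from ``virtually solvable'' to ``virtually abelian'', which requires peeling off any residual unipotent part of the Zariski closure by iterating the construction on further factor tori obtained from its invariant flag.
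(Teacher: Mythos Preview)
Your easy implications are fine and match the paper. The substantive direction $(i)\Rightarrow(iii)$ has a real gap in the induction step, and the paper takes a different route that avoids it.

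The gap is in your dichotomy ``failure of spectral gap either on $\overline T$ or on $S$''. The orthogonal decomposition is $L^2_0(T)=L^2_0(\overline T)\oplus L^2_0(\overline T)^\perp$, and the second summand is \emph{not} $L^2_0(S)$: it is spanned by the characters $\chi\in\Delta^*\setminus W$, and the $H$-representation there is not the Koopman representation of any smaller torus. So if the almost invariant vectors live in that complement, you have nothing to induct on. Worse, even in your case~(a) you cannot conclude that the almost invariant sequence concentrates on $L^2_0(\overline T)$: a weak-$*$ limit on projective space supported in $P(W)$ only says the lattice points are \emph{projectively close} to $W$, not that they lie in $W\cap\Delta^*$ (think of $(n,1)\in\ZZ^2$ converging to $[1:0]$). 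Your proposed use of several places does not repair this. Finally, in case~(b) Furstenberg's lemma yields only that the identity component of the Zariski closure has relatively compact derived group, i.e.\ amenability; getting ``virtually abelian'' genuinely needs the arithmetic input you allude to at the end, and that step is not a small add-on.

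The paper's argument sidesteps all of this. It first reduces from $H\subset\Aff(T)$ to $\paut(H)\subset\Aut(T)$ by a Herz majoration (Proposition~\ref{Pro-AffAut}), so one may assume $H\subset\Aut(T)$. Then, instead of weak-$*$ limits of probability measures, it uses the equivalence between ``no spectral gap for the permutation representation on $\ell^2(\Delta^*\setminus\{0\})$'' and ``existence of an $H$-invariant \emph{mean} on $\Delta^*\setminus\{0\}$''. A mean is a much more rigid object: the paper introduces the canonical largest subspace $V(H)\subset V^*$ on which $\overline{H_{V(H)}}$ is amenable, and proves directly (Proposition~\ref{Pro-InvMean}) that any $H$-invariant mean on $\Delta^*\setminus\{0\}$ gives full mass to $V(H)\cap\Delta^*$. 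This yields a non-trivial rational $H$-invariant subspace with amenable image in one stroke, with no induction on $\dim T$. The upgrade from amenable to virtually abelian is then a separate algebraic argument (Proposition~\ref{Pro-AmenableQuotient-Bis} and Corollary~\ref{Pro-AmenableQuotient-Discret}) exploiting that eigenvalues of integer matrices of modulus~$1$ are roots of unity of bounded order, which forces the derived group to be virtually unipotent and lets one pass to its fixed subspace.
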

The following corollary is an immediate consequence
of the implication $(i)\Rightarrow (iii)$ in the previous theorem.
\begin{corollary}
\label{Cor-QIrred-Torus}
Let $T=V/\Delta$ be a torus. Let $H$ be a countable subgroup of $\AffT$ such that 
$\paut(H)\subset \Aut (T)$ is not virtually abelian. 
Assume that the action of $H$ on $V$ is $\QQ$-irreducible
for the rational structure on $V$ defined by $\Delta.$
Then the action of  $H$ on $T$ has a spectral gap.
\end{corollary}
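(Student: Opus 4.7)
The plan is to derive the corollary as a direct contrapositive application of the implication $(i) \Rightarrow (iii)$ of Theorem~\ref{Theo3}. Assume, for contradiction, that the action of $H$ on $T$ does not have a spectral gap. Theorem~\ref{Theo3} then furnishes a non-trivial $H$-invariant factor torus $\overline T$ of $T$ such that the projection of $\paut(H)$ on $\Aut(\overline T)$ is virtually abelian.

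Unpacking the definition of $H$-invariant factor torus recalled just before the statement of Theorem~\ref{Theo1}, one has $\overline T = T/S$ for a subtorus $S = W/(W\cap \Delta)$, where $W$ is a rational linear subspace of $V$ that is invariant under $\paut(H)$. Non-triviality of $\overline T$ amounts to $S \neq T$, equivalently $W \neq V$.

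At this point the $\QQ$-irreducibility hypothesis enters decisively: by assumption, the only $\paut(H)$-invariant rational subspaces of $V$ are $\{0\}$ and $V$. Combined with $W \neq V$, this forces $W = \{0\}$, so that $S$ is trivial and $\overline T = T$ itself. The projection of $\paut(H)$ on $\Aut(\overline T) = \Aut(T)$ is then $\paut(H)$, which must therefore be virtually abelian --- contradicting the standing hypothesis that $\paut(H)$ is not virtually abelian. Hence the action of $H$ on $T$ has a spectral gap.

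There is no genuine obstacle here once Theorem~\ref{Theo3} is in hand: the corollary is essentially a packaging of the implication $(i) \Rightarrow (iii)$ in a situation where the irreducibility assumption rules out a priori the existence of any non-trivial $H$-invariant factor torus other than $T$. All the mathematical substance lies in the proof of Theorem~\ref{Theo3}, in particular in establishing $(i) \Rightarrow (iii)$.
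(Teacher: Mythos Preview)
Your argument is correct and matches the paper's approach: the paper states that the corollary is an immediate consequence of the implication $(i)\Rightarrow(iii)$ in Theorem~\ref{Theo3}, and you have simply spelled out this implication via the contrapositive, using $\QQ$-irreducibility to force $W=\{0\}$ and hence $\overline T=T$.
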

This last result  
was proved in   \cite[Theorem~6.5.ii]{FuSh} for a 
subgroup $H$ of  $\Aut(T)$ under the stronger assumption
that the action of $H$ on $V$ is $\RRR$-irreducible.
We give an example  of a subgroup $H$ 
of automorphisms of a $6$-dimensional torus $T=V/\Delta$
 which acts $\QQ$-irreducibly but not $\RRR$-irreducibly
on $V$ and which has a spectral gap on $T.$
\begin{example}
\label{Exa1}
Let $q$ be the quadratic form on $\RRR^3$ given by 
$$q(x)=x_1^2+ x_2^2- \sqrt{2}x_3^2,$$
 and let $SO(q,\RRR)\subset GL_3(\RRR)$
be the orthogonal group of $q.$ Set 
$$H=SL_3(\ZZ[\sqrt{2}]\cap SO(q,\RRR).$$
Let $\sigma$ be the  non-trivial automorphism
of the field $\QQ[\sqrt{2}].$  For every $g\in  SO(q,\RRR),$
the matrix $g^\sigma$, obtained by conjugating each entry of $g,$ preserves  the conjugate
form  $q^\sigma$  of $q$ under $\sigma.$ 
The mapping 
$$
\QQ[\sqrt{2}]  \to \RRR\times \RRR, \qquad x\mapsto (x,\sigma(x))
$$
induces an isomorphism between  $\ZZ[\sqrt{2}]^3$ and a lattice $\Delta$ in 
$\RRR^3\times\RRR^3.$ 
It induces also   an isomophism
$\ga\mapsto (\ga,\ga^\sigma)$
between $H$ and a lattice  $\Ga$ in  $SO (q,\RRR)\times SO (q^\sigma,\RRR).$
 Moreover, $H$ leaves $\ZZ[\sqrt{2}]^3$ invariant  and 
$\Ga$ leaves $\Delta$ invariant. 
We obtain in this way an action of $H$ on the torus $T= \RRR^6/\Delta.$

Since $SO(q^\sigma,\RRR)\cong SO(3)$ is compact,  $H$ is a lattice
in $SO (q,\RRR).$ This implies (Borel density theorem) that 
the Zariski closure of $H$ in $SL_3(\RRR)$ is 
the simple Lie group $SO(q,\RRR),$
so that the action of $H$ on $\RRR^3$ is 
$\RRR$-irreducible and hence $\QQ$-irreducible
for the usual rational structure on $\RRR^3.$
It follows that the action of $H$ on $\RRR^6$ is $\QQ$-irreducible for the
rational structure defined by the lattice $\Delta$ of $\RRR^6.$
Observe that the action of $H$ on $\RRR^6$ is not $\RRR$-irreducible 
since $\Ga$ leaves invariant  each copy  of $\RRR^3$  
in $\RRR^6= \RRR^3\oplus \RRR^3.$
Moreover, $H$ is not virtually abelian as it is a lattice in
$SO (q,\RRR)\cong SO(2,1).$  As a consequence of the previous corollary,
the action of $H$ on $T$ has a spectral gap.
\end{example}

Concerning the proof of Theorem~\ref{Theo3},
we will first treat the case of groups of toral automorphisms.

Choosing  a basis for the $\ZZ$-module  $\Delta,$
we identify $V$ with $\RRR^d$ and $\Delta$ with $\ZZ^d.$
By means of the standard scalar product on $\RRR^d,$
we identify 
the  dual group $\widehat V$  of
   $V$ (that is, the group of unitary characters
of $V$) with  $V$. 
The dual action of an element $g\in GL(V)$ on $\widehat V$ corresponds to the 
action of $(g^{-1})^t$ on $V.$
Since  $T=V/\Delta,$ the  dual group $\widehat T$ 
 can be identified with  $\Delta.$
Let $W$ be a
rational linear subspace of $V$.
The dual group of the quotient $V/W$  corresponds
to the orthogonal complement $W^\perp$  of $W,$
which is also a rational linear subspace of $V$.
The dual group of the torus factor  $\overline T= (V/W)/((W+\Delta)/\Delta)$ 
corresponds to $W^\perp \cap \Delta.$

The discussion above shows that   Theorem~\ref{Theo3},
in the case of a group of toral automorphisms is equivalent to the following theorem.

\begin{theorem}
\label{Theo4}
Let  $H$ be a subgroup of $GL_d(\ZZ).$
The following properties are equivalent.
\begin{itemize}
 \item [(i)] The action of $H$ on $T=\RRR^d/\ZZ^d$ does not have a spectral gap.
\item [(ii)] There exists a  non-trivial  rational subspace $W$
of $\RRR^d$ which is  invariant  under  the subgroup  $H^t$
of $GL_d(\ZZ)$  and such that  the image of $H^t$ in $GL(W)$ is 
an amenable group.
\item [(iii)] There exists a  non-trivial  rational subspace $W$
of $\RRR^d $ which is invariant under $H^t$    and 
such that the  the image of $H^t$   in $GL(W)$ is 
a virtually abelian  group.
\end{itemize}

\end{theorem}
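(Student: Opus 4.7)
I would prove the equivalence by the chain $(iii) \Rightarrow (ii) \Rightarrow (i) \Rightarrow (ii) \Rightarrow (iii)$. The implication $(iii) \Rightarrow (ii)$ is immediate, since virtually abelian groups are amenable.

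For $(ii) \Rightarrow (i)$: let $W \subset \RRR^d$ be the non-trivial rational $H^t$-invariant subspace of the hypothesis and write $\rho : H^t \to GL(W)$ for the induced homomorphism, with amenable image $\rho(H^t)$. The lattice $W \cap \ZZ^d$ is non-zero and $H^t$-invariant; pick $m_0 \in W \cap \ZZ^d \setminus \{0\}$ with orbit $O = H^t \cdot m_0$. The closed $H$-invariant subspace $\mathcal{H}_O = \overline{\mathrm{span}}\{\chi_m : m \in O\}$ of $L^2_0(T)$ is unitarily equivalent, as an $H^t$-representation, to the quasi-regular representation $\ell^2(\rho(H^t)/\rho(H^t)_{m_0})$ (the $H^t$-action on $\mathcal{H}_O$ factoring through $\rho$). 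Amenability of $\rho(H^t)$ yields an invariant mean on $\ell^\infty(\rho(H^t)/\rho(H^t)_{m_0})$, and the square-root trick produces almost invariant unit vectors in $\mathcal{H}_O \subset L^2_0(T)$; hence there is no spectral gap.

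For $(ii) \Rightarrow (iii)$: pass to a minimal non-zero rational $H^t$-invariant subspace $W_0 \subset W$, with induced homomorphism $\rho_0 : H^t \to GL(W_0)$. Since $\rho_0$ factors through $\rho$ by restriction, $\rho_0(H^t)$ is an amenable quotient of $\rho(H^t)$, and $H^t$ acts $\QQ$-irreducibly on $W_0$ by minimality. By the Tits alternative $\rho_0(H^t)$ is virtually solvable, so its Zariski closure $G \subset GL(W_0)$ over $\QQ$ has a solvable connected component $G^\circ$. The unipotent radical $R_u(G^\circ)$ is normal in $G$ and normalized by $H^t$; Kolchin's theorem gives a non-zero $\QQ$-rational $R_u(G^\circ)$-fixed subspace which is $G$-invariant, hence $H^t$-invariant, so by $\QQ$-irreducibility equals all of $W_0$. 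Thus $R_u(G^\circ)$ acts trivially on $W_0$, and the image of $G^\circ$ in $GL(W_0)$ is then a reductive solvable algebraic group, i.e., an algebraic torus, which is commutative; consequently $\rho_0(H^t)$ is virtually abelian.

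The main direction $(i) \Rightarrow (ii)$: take almost invariant unit vectors $\xi_n = \sum_{m \neq 0} c_m^{(n)} \chi_m \in L^2_0(T)$. The probability measures $\mu_n$ on $\ZZ^d \setminus \{0\}$ given by $\mu_n(m) = |c_m^{(n)}|^2$ are almost $H^t$-invariant in total variation. For each place $v$ of $\QQ$ (including $v = \infty$), push $\mu_n$ forward to the compact projective space $\PP(\QQ_v^d)$ via the diagonal embedding $\ZZ^d \hookrightarrow \QQ_v^d$; the resulting measures are almost invariant under the projective action of $H^t$, and by weak-$*$ compactness converge along a subsequence to an $H^t$-invariant probability measure $\bar\mu^{(v)}$ on $\PP(\QQ_v^d)$. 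A version of Furstenberg's theorem on stabilizers of probability measures on projective spaces over local fields then yields, at each $v$, either amenability of $H^t$ as a linear group (in which case $(ii)$ holds with $W = \RRR^d$) or a proper $\QQ_v$-rational $H^t$-invariant subspace of $\QQ_v^d$. Since $H^t \subset GL_d(\ZZ)$ has a $\QQ$-algebraic Zariski closure in $GL_d$, combining the local invariant subspaces through this rational structure gives a proper $\QQ$-rational $H^t$-invariant subspace $W$ on which $H^t$ acts with amenable image. The main obstacle is precisely this rationality control: the pushforward to $\PP(\RRR^d)$ alone, together with a naive application of Furstenberg, produces only an $\RRR$-linear invariant subspace, which need not be the real span of a rational one; combining the real and $p$-adic pushforwards is what forces rationality, and the careful formulation of the local-field version of Furstenberg's theorem is the heart of the proof.
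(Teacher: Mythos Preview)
Your arguments for $(iii)\Rightarrow(ii)$ and $(ii)\Rightarrow(i)$ are fine. Your proof of $(ii)\Rightarrow(iii)$ via a minimal rational invariant subspace, the Tits alternative, and the structure of connected solvable algebraic groups is correct and is different from the route taken in the paper: there the authors first characterise $V(H)$ as the largest subspace on which every element of $[H^0,H^0]$ has all eigenvalues of modulus~$1$ (Proposition~\ref{Pro-AmenableQuotient-Bis}), then use integrality to upgrade ``modulus~$1$'' to ``root of unity of bounded order'', extract a characteristic unipotent finite-index subgroup (Corollary~\ref{Pro-AmenableQuotient-Discret}), and pass to its fixed subspace $W_1\subset W$. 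Your argument bypasses this eigenvalue analysis entirely at the cost of invoking Tits; both are short once the right ingredients are named.

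The gap is in $(i)\Rightarrow(ii)$. Your appeal to the $p$-adic places is vacuous: since $H^t\subset GL_d(\ZZ)\subset GL_d(\ZZ_p)$ and $GL_d(\ZZ_p)$ is compact, the closure of $H^t$ in $GL_d(\QQ_p)$ is automatically amenable for every finite prime $p$, so Furstenberg's lemma at a non-archimedean place imposes no constraint whatsoever --- it cannot single out any subspace, and in particular cannot force rationality. You are therefore left with only the archimedean information: a real subspace $W_\infty\subset\RRR^d$ (the span of the support of $\bar\mu^{(\infty)}$) on which the \emph{closure} of the image of $H^t$ is amenable. As you note yourself, $W_\infty$ need not be rational, and when it is not, the image of $H^t$ in $GL(W_\infty)$ need not be discrete, so amenability of its closure does not give amenability of the image as an abstract group.

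The paper resolves this without ever leaving the real place. It introduces $V(H^t)$, the \emph{largest} $H^t$-invariant real subspace on which the closure of $H^t$ acts amenably (Proposition~\ref{Pro-AmenableQuotient}), and then proves (Proposition~\ref{Pro-InvMean}) that any $H^t$-invariant mean $m$ on $\ZZ^d\setminus\{0\}$ satisfies $m(V(H^t)\cap\ZZ^d)=1$. The mechanism is a quotient argument: if $m$ gave mass to $\ZZ^d\setminus V(H^t)$, push $m$ to the quotient lattice in $\RRR^d/V(H^t)$ and apply Furstenberg there to manufacture a strictly larger subspace with amenable action, contradicting maximality of $V(H^t)$. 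Once $m(V(H^t)\cap\ZZ^d)=1$, rationality is automatic: $V(H^t)\cap\ZZ^d\neq\{0\}$, its $\RRR$-span is a non-trivial rational $H^t$-invariant subspace contained in $V(H^t)$, and on any such rational subspace the image of $H^t$ is a discrete (hence closed) subgroup of an amenable group, so itself amenable. This iterate-on-the-quotient step is the idea your sketch is missing.
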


Observe that  the implication $(iii) \Longrightarrow (ii)$ is obvious and that
the implication $(ii) \Longrightarrow (i)$  follows from the result
in \cite{JuRo} quoted in the introduction.
Therefore, it remains to show that (i) implies (ii) and that (ii) implies (iii).

\section{A canonical amenable group associated to  a linear group }
\label{S10}
Let $V$ be a finite-dimensional  real vector space.
(Although we will consider only real vector spaces,
the results in this section are valid for vector spaces
over any local field.)
 Let $g\in GL(V)$ and $ W$ a $g$-invariant  linear subspace
of $V.$ We denote by $g_W\in GL(W)$ the automorphism of $W$
given by the restriction  of $g$ to $W.$
If $W'$ is  another $g$-invariant subspace contained in $W,$
we will denote by $g_{W/W'}\in GL(W/W')$ the automorphism
of $W/W'$ induced by $g.$
Also, if $H$ is a subgroup of $GL(V)$ and $W'\subset W$ are $H$-invariant subspaces of $V,$
 we will denote by $H_W$ and $H_{W/W'}$ the corresponding subgroups
of $GL(W)$ and $GL(W/W'),$ respectively.

 For a subgroup $H$ of $GL(V),$ we denote by $\overline H$
its closure for the usual locally compact  topology on  $GL(V).$ 
The aim of this section is to prove the following  result. 
\begin{proposition}
 \label{Pro-AmenableQuotient}
Let $H$ be a  subgroup of $GL(V).$ There exists a largest $H$-invariant 
linear subspace $V(H)$
of $V$ such that the group $\overline{H_{V(H)}} $ is amenable.
More precisely,  let $V(H)$ be the subspace of $V$ generated by the union
of the $H$-invariant subspaces $W\subset V$ for which  $\overline{H_W}$ is amenable.
Then  $\overline{H_{V(H)}}$ is amenable.
\end{proposition}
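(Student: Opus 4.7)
The proof reduces to the following claim: if $W_1,W_2\subset V$ are two $H$-invariant subspaces with $\overline{H_{W_1}}$ and $\overline{H_{W_2}}$ both amenable, then $\overline{H_{W_1+W_2}}$ is also amenable. Granting this claim, the proposition follows: because $\dim V<\infty$, the subspace $V(H)$ defined in the statement is already a finite sum $W_1+\cdots+W_n$ of members of the defining family, and iterating the claim gives amenability of $\overline{H_{V(H)}}$. The maximality is built into the definition of $V(H)$ as the span of all admissible $W$.

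To establish the claim, consider the restriction homomorphism
$$
\phi\colon H_{W_1+W_2}\longrightarrow H_{W_1}\times H_{W_2},\qquad g\longmapsto (g_{W_1},\,g_{W_2}),
$$
which is well defined, continuous, and injective, since a linear endomorphism of $W_1+W_2$ is determined by its restrictions to $W_1$ and to $W_2$. Fix a basis of $W_1+W_2$ adapted to the flag $W_1\cap W_2\subset W_1,\,W_2$. In this basis, elements of the closed subgroup $S\subset GL(W_1+W_2)$ preserving both $W_1$ and $W_2$ take the block form
$$
\begin{pmatrix} A & B & C\\ 0 & D & 0\\ 0 & 0 & E\end{pmatrix},
$$
with $A$ acting on $W_1\cap W_2$, $D$ on $W_1/(W_1\cap W_2)$, and $E$ on $W_2/(W_1\cap W_2)$. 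The pair $(g_{W_1},g_{W_2})$ reads off directly as
$$
\left(\begin{pmatrix} A & B\\ 0 & D\end{pmatrix},\begin{pmatrix} A & C\\ 0 & E\end{pmatrix}\right),
$$
and from this description one sees at once that $\phi$ extends to a topological group isomorphism of $S$ onto the closed subgroup of $GL(W_1)\times GL(W_2)$ consisting of pairs that agree on $W_1\cap W_2$.

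Because $\overline{H_{W_1+W_2}}\subset S$, the image $\phi(\overline{H_{W_1+W_2}})$ is closed in $GL(W_1)\times GL(W_2)$; by continuity of $\phi$ it lies in $\overline{H_{W_1}}\times\overline{H_{W_2}}$. The latter is amenable as a finite product of amenable locally compact groups, so the closed subgroup $\phi(\overline{H_{W_1+W_2}})$ is amenable, and hence so is $\overline{H_{W_1+W_2}}$. The principal delicacy is precisely this topological point: a merely continuous injective homomorphism into an amenable locally compact group would not suffice to transfer amenability back to the source, which is why the explicit block-matrix description of $S$ is needed to verify that $\phi$ is a homeomorphism onto a \emph{closed} subgroup of the product.
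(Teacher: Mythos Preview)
Your proof is correct. Both you and the paper reduce to the two-subspace case and then iterate, but the key step is handled differently. The paper first isolates an auxiliary lemma: for a closed subgroup $L\subset GL(V)$ with an $L$-invariant subspace $W$, the group $L$ is amenable if and only if $\overline{L_W}$ and $\overline{L_{V/W}}$ are amenable. It then applies this to the flag $W_1\subset W_1+W_2$, using the isomorphism $(W_1+W_2)/W_1\cong W_2/(W_1\cap W_2)$ to see that $\overline{H_{(W_1+W_2)/W_1}}$ is amenable as a quotient of $\overline{H_{W_2}}$. Your argument instead realizes $\overline{H_{W_1+W_2}}$ directly as a closed subgroup of the amenable product $\overline{H_{W_1}}\times\overline{H_{W_2}}$ via the explicit block-matrix identification, bypassing any extension lemma. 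Your route is more self-contained and arguably more transparent for this particular statement; the paper's route has the advantage that its extension lemma is reused later (in the proof that an $H$-invariant mean on $\Delta\setminus\{0\}$ is concentrated on $V(H)$), so the extra work is amortized.
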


A more  explicit  description of  $V(H)$  will be given later  (Proposition~\ref{Pro-AmenableQuotient-Bis}).
 For the proof of  the proposition above, we will need  the
following elementary lemma.

\begin{lemma}
 \label{Lem1-AmenableQuotient}
Let $H$ be a closed subgroup of $GL(V)$ and $W$ an $H$-invariant subspace of $V.$
 Then  $H$ is amenable if and only if
$\overline{H_{W}}$ and $\overline{H_{V/W}}$ are amenable. 
\end{lemma}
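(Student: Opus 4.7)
The key object is the closed abelian subgroup
$$N = \{g \in GL(V) : g_W = I_W \text{ and } g_{V/W} = I_{V/W}\}$$
of $GL(V)$. In a basis adapted to $W$, elements of $N$ are block matrices with identity diagonal blocks and an arbitrary upper-right block, so $N \cong \operatorname{Hom}(V/W, W)$ as a vector group. Let $P = \{g \in GL(V) : g(W) \subset W\}$; then $P$ is closed, contains $H$, and $N$ is a closed normal subgroup of $P$, the map $\Phi(g) = (g_W, g_{V/W})$ inducing a topological group isomorphism $P/N \cong GL(W) \times GL(V/W)$.

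For the forward direction, assume $H$ is amenable. The continuous homomorphisms $h \mapsto h_W$ and $h \mapsto h_{V/W}$ from $H$ have dense images in the closed subgroups $\overline{H_W} \subset GL(W)$ and $\overline{H_{V/W}} \subset GL(V/W)$. Given any continuous affine action of $\overline{H_W}$ on a nonempty compact convex subset $C$ of a locally convex space, composition with the homomorphism gives a continuous affine action of $H$; by amenability of $H$ this has a fixed point $c \in C$ whose stabilizer in $\overline{H_W}$ is a closed subgroup containing the dense image $H_W$, hence is all of $\overline{H_W}$. Day's fixed-point criterion then yields amenability of $\overline{H_W}$, and the same argument handles $\overline{H_{V/W}}$.

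For the converse, assume $\overline{H_W}$ and $\overline{H_{V/W}}$ are amenable and set $\widetilde H = \overline{HN}$, the closure of $HN$ in $GL(V)$. Since $P$ is closed and contains $HN$, we have $\widetilde H \subset P$, so $N$ is a closed normal subgroup of $\widetilde H$. Writing $\pi : P \to P/N$ for the quotient map, $\pi(\widetilde H)$ is closed in $P/N$ (because $\widetilde H = \pi^{-1}(\pi(\widetilde H))$) and contains $\pi(HN) = \Phi(H)$, while by continuity $\pi(\widetilde H) \subset \overline{\Phi(H)}$; hence $\widetilde H/N = \overline{\Phi(H)} \subset \overline{H_W} \times \overline{H_{V/W}}$. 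The product is amenable, so its closed subgroup $\widetilde H/N$ is amenable; combined with the amenability of the abelian group $N$, the short exact sequence $1 \to N \to \widetilde H \to \widetilde H/N \to 1$ shows that $\widetilde H$ is amenable. Since $H$ is closed in $GL(V)$ and contained in $\widetilde H$, $H$ is a closed subgroup of the amenable group $\widetilde H$, hence amenable.

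The delicate point is that $\Phi(H)$ itself need not be closed in $\overline{H_W} \times \overline{H_{V/W}}$, which prevents one from directly realising $H/(H \cap N)$ as a closed subgroup of an amenable group; replacing $H$ by $\widetilde H = \overline{HN}$ remedies this at no cost, since $N$ is abelian. Beyond this topological manoeuvre the proof relies only on the standard permanence properties of amenability under closed subgroups, extensions, and closures of dense continuous homomorphic images.
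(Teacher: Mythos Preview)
Your proof is correct and follows essentially the same strategy as the paper: for the converse you embed $H$ as a closed subgroup of a group sitting in an extension with unipotent (abelian) kernel $N$ and quotient a closed subgroup of the amenable product $\overline{H_W}\times\overline{H_{V/W}}$. The paper's construction is slightly more direct---rather than taking $\widetilde H=\overline{HN}$ and identifying $\widetilde H/N$ with $\overline{\Phi(H)}$, it simply sets $L=\Phi^{-1}\bigl(\overline{H_W}\times\overline{H_{V/W}}\bigr)\subset P$ and observes that $\Phi|_L$ is surjective (block matrices do this immediately), so $L/N\cong \overline{H_W}\times\overline{H_{V/W}}$ without any closure manoeuvre.
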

\begin{proof}
Since $\overline{H_{W}}$ and $\overline{H_{V/W}}$
  are  closures of quotients of  $H,$
both are amenable if $H$ is amenable.

Assume that $\overline{H_{W}}$ and $\overline{H_{V/W}}$ are amenable. 
Let $L$ be the closed subgroup consisting of the elements $g\in GL(V)$
leaving $W$ invariant and
for which $g_{W}$ belongs to  $\overline{H_{W}}$ and $g_{V/W}$ belongs to $\overline{H_{V/W}}.$
The mapping  
$$
\vfi: L\to \overline{H_{W}}\times \overline{H_{V/W}},\qquad g\mapsto (g_{W}, g_{V/W})
$$
is a continuous homomorphism. It is clear that $\vfi$ is surjective.
Moreover, $U=\Ker(\vfi)$ is a unipotent closed subgroup of $L.$
Since $\overline{H_{W}}\times \overline{H_{V/W}}$ and $U$ are amenable, $L$ is amenable.
The closed subgroup $H$ of $L$ is therefore amenable.$\bsq$
 
\end{proof}

\n
\textbf{Proof of Proposition~\ref{Pro-AmenableQuotient}} \ 
We can  write  $V(H)= \sum_{i=1}^r W_i$ as a sum
of  finitely many $H$-invariant subspaces
$W_1,\dots, W_r$ of $V$ such that  $\overline{H_{W_i}}$ is amenable for every $1\leq i\leq r.$

We show by induction on $s\in\{ 1, \dots, r\}$ that
 $\overline{H_{W^s }}$ is amenable, where
$W^s=\sum_{i=1}^{s} W_i.$ The case $s=1$ being obvious,
assume that $\overline{H_{W^s}}$ is amenable for some $s\in\{ 1, \dots, r-1\}.$
The group $$GL(W^{s+1}/ W^s)= GL((W^{s}+ W_{s+1})/ W^s)$$ is canonically isomorphic
to $GL(W_{s+1}/(W^s\cap W_{s+1})$
and $\overline{H_{W^{s+1}/ W^s}}$ corresponds to  $\overline{H_{W_{s+1}/(W^s\cap W_{s+1})}}$
under this isomorphism. 
Now, $\overline{H_{W_{s+1}/(W^s\cap W_{s+1})}}$ is amenable since $\overline{H_{W_{s+1}}}$
is amenable. Hence,  $\overline{H_{W^{s+1}/ W^s}}$ is amenable.
Moreover,  $\overline{H_{W^s }}$ is amenable by the induction hypothesis.
The previous lemma implies  that $\overline{H_{W^{s+1}}}$
is amenable. $\bsq$


\section{Invariant means  supported by  rational subspaces}
\label{S11}
Let $G$ be a locally compact group. 
There is a well-known relationship
between  weak containment
properties of the trivial  representation $1_G$ and existence on invariant means
on appropriate spaces (see below).
We will need to make this relationship more precise in the case
where $H$ is a subgroup of toral automorphisms.

By a  unitary representation $(\pi,\H)$
of $G$, we will always mean a strongly continuous homomorphism $\pi:G\to U(\H)$ 
from $G$ to the unitary group of a complex Hilbert space $\H.$

Recall that, for every finite measure $\mu$
of $G,$ the operator $\pi(\mu)\in \B(\H)$ is defined
by  the integral 
$$
\pi(\mu) \xi= \int_G \pi(g) \xi d\mu(g) \tout \xi\in \H.
$$
Assume that   $G$ is a discrete group and  $\pi$ and $\rho$ are unitary representations of
$G$;  then $\pi$ is weakly contained in $\rho$ if and only if
$\Vert \pi(\mu)\Vert \leq \Vert \rho(\mu)\Vert$ for every 
finite measure $\mu$ on $G$  (see Section~18 in  \cite{Dixmier}).
Recall also that, given a  probability measure
$\mu$ on $G$ which is  aperiodic,
the trivial representation $1_G$
 is weakly contained in a unitary representation $\pi$
  if  and only if $\Vert \pi(\mu)\Vert=1$  (see \cite[G.4.2]{BHV}).

Let $X$ be a topological space and $C^b(X)$ the Banach
space of all bounded continuous functions on $X$ equipped
with the supremum norm. Recall that a mean on $X$ is a 
linear functional $m$ on $C^b(X)$ such that
$m(1_X)=1$ and such that $m(\vfi)\geq 0$ for every 
$\vfi\in C^b(X)$ with $\vfi\geq 0.$ A mean is automatically
continuous. 
We will often write $m(A)$ instead of $m(1_A)$ for a subset $A$ of $X.$

 Observe that the means on a compact space $X$
are the probability measures on $X.$

Let $H$ be a group acting on $X$ by homeomorphisms. 
Then $H$ acts naturally on $C^b(X).$
A mean $m$ on $X$ is $H$-invariant if 
$m(h.\vfi)= m(\vfi)$ for all $\vfi\in C^b(X)$ and $h\in H.$

Let $Y$ be another topological space and $f:X\to Y$   a
continuous mapping. For every mean $m$ on $X,$ the push-forward
$f_*(m)$ of $m$ is the mean on $Y$ defined by 
$\vfi\mapsto m(\vfi\circ f )$ for $ \vfi\in C^b(Y)$.

We will consider invariant means on two kinds of topological spaces:

\n
$\bullet$ $X$ is a set with the  discrete topology
and endowed with an action of a group $H$.   It is well-known (see Th\'eor\`eme on p. 44 in \cite{Eymard}) that 
there exists an $H$-invariant mean on $X$ if and only if
the natural unitary representation $U$ of $H$ on $\ell^2(X)$
almost has invariant vectors (that is, if and only if $U$ weakly contains
the trivial representation $1_H$ of $H$).

\n
$\bullet$ $X=V\setminus\{0\},$ where $V$ is a finite dimensional 
real vector space. Let $H$ be a subgroup of $GL(V).$
If $m$ is an $H$-invariant  mean
on $V\setminus\{0\},$ then $\pi_*(m)$ is an $H$-invariant probability measure
on the projective space ${\mathbf P}(V),$ where 
$\pi: V\setminus\{0\} \to {\mathbf P}(V)$ is the canonical projection.

The following result is a version of Furstenberg's celebrated lemma
(see \cite{Furst} or  \cite[Corollary 3.2.2]{Zimmer})
on stabilizers of probability measures on projective spaces.
We will need later (in Section~\ref{S13}) the  more precise form we give  for this
lemma (see also the proof of  Theorem 6.5 (ii) in \cite{FuSh}).

For a subgroup $H$  of $GL(V),$  we denote by 
$\ZC(H)$ the  closure of $H$ in the Zariski topology 
 and by $\ZC(H)^0$ the connected component of $\ZC(H)$ in the Zariski topology.
As is well-known, $\ZC (H)^0$ has finite index in $\ZC(H).$

\begin{lemma}
\label{Lem-Furstenberg}
Let $H$ be a closed subgroup of $GL(V).$
Assume that  $H$ stabilizes a probability
measure $\nu$ on ${\mathbf P}(V)$ which is not supported on a proper
projective subspace. Then the commutator subgroup $[H^0, H^0]$ 
of $H^0$ is relatively compact, where $H^0$ is the normal subgroup of finite index
$H\cap \ZC(H)^0$ of $H.$ In particular, $H$ is amenable.
\end{lemma}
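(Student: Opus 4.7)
The plan is to combine Furstenberg's classical contraction principle with an induction on $d = \dim V$, using Zariski-connectedness of $\ZC(H)^0$ as the key algebraic input. The base case $d = 1$ is trivial since $GL(V)$ is abelian. The goal reduces to showing that $\overline{[H^0, H^0]}$ is compact in $GL(V)$; amenability of $H$ then follows, since $H^0$ has finite index in $H$ and is an extension of the abelian group $H^0/\overline{[H^0,H^0]}$ by the compact group $\overline{[H^0,H^0]}$. A key auxiliary observation used throughout is that $H^0 = H \cap \ZC(H)^0$ is Zariski dense in the connected algebraic group $\ZC(H)^0$; consequently, for any $H^0$-invariant subspace $U \subseteq V$, the image $H^0|_U$ is Zariski dense in the connected group $\ZC(H)^0|_U$, so that $(H^0|_U)^0 = H^0|_U$ and the inductive hypothesis can be applied without further adjustment.

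If the image of $H$ in $PGL(V)$ is relatively compact, then so is the image of $[H^0, H^0]$. Commutators in $GL(V)$ have determinant $1$, so $[H^0, H^0] \subseteq SL(V)$; since $SL(V) \to PSL(V) \subseteq PGL(V)$ has finite kernel, relative compactness lifts, giving $\overline{[H^0, H^0]}$ compact in $GL(V)$. Otherwise, pick $h_n \in H$ whose projective images diverge, fix an operator norm on $\End(V)$, set $\tilde g_n = h_n/\|h_n\|$, and extract a subsequence $\tilde g_n \to g_\infty$ with $\|g_\infty\| = 1$. The projective divergence forces $g_\infty$ to be non-invertible, with proper kernel $K_1$ and image $V_1$. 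Using $\tilde g_n \nu = \nu$ and splitting $\nu = \nu|_{\mathbf{P}(V) \setminus \mathbf{P}(K_1)} + \nu|_{\mathbf{P}(K_1)}$, dominated convergence on continuous test functions shows that the first piece pushes forward weakly to a measure supported on $\mathbf{P}(V_1)$. Renormalizing $\tilde g_n|_{K_1}$ by its operator norm and iterating at most $d$ times exhibits $\nu$ as a finite sum of weak limits, each supported on a proper projective subspace. Hence the Zariski closure $X = \overline{\supp(\nu)}^{\mathrm{Zar}}$ is a proper algebraic subvariety of $\mathbf{P}(V)$ whose irreducible components $X_1, \dots, X_k$ each lie in a proper linear subspace of $V$.

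Set $U_i = \mathrm{span}(X_i) \subsetneq V$. Since $H$ permutes the canonical components $X_i$ and $\ZC(H)^0$ is Zariski-connected, each $X_i$ (hence each $U_i$) is fixed by $H^0$. The normalized restriction $\bar\nu_i$ of $\nu$ to $\mathbf{P}(U_i)$ is an $H^0|_{U_i}$-invariant probability measure whose support spans $U_i$, so by the inductive hypothesis applied to $H^0|_{U_i} \subseteq GL(U_i)$, the closure $\overline{[H^0|_{U_i}, H^0|_{U_i}]}$ is compact in $GL(U_i)$ for each $i$. Because $\supp(\nu)$ spans $V$, we have $\sum_i U_i = V$; the closed subset
\[
K = \bigl\{g \in GL(V) : g(U_i) = U_i \text{ and } g|_{U_i} \in \overline{[H^0|_{U_i}, H^0|_{U_i}]} \text{ for all } i\bigr\}
\]
is then compact in $GL(V)$ (any sequence in $K$ has convergent restrictions to each $U_i$, and these glue to a limit via $V = \sum U_i$), and contains $[H^0, H^0]$. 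Hence $\overline{[H^0, H^0]}$ is compact, completing the induction.

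The main obstacle lies in the Furstenberg contraction, specifically the iterated renormalization on successive kernels and the verification that the support of $\nu$ is in fact contained in a finite union of proper projective subspaces. The subsequent algebraic bookkeeping --- reducing the induction to each $U_i$ cleanly --- is made possible by two complementary algebraic facts: Zariski-connectedness of $\ZC(H)^0$ forces $H^0$ to fix each canonical irreducible component of $\overline{\supp(\nu)}^{\mathrm{Zar}}$, and Zariski density of $H^0$ in $\ZC(H)^0$ ensures that the inductive statement applies verbatim to each restricted subgroup $H^0|_{U_i}$.
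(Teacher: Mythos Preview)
Your proof is correct and follows a genuinely different route from the paper's argument.

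The paper proceeds in one shot, without induction: it invokes the measure-theoretic decomposition $\nu = \sum_i \nu_i$ where each $\nu_i$ is supported on a minimal projective subspace $\mathbf{P}(V_i)$ (so that $\nu_i$ gives zero mass to every proper projective subspace of $\mathbf{P}(V_i)$). Since the $H$-orbit of each $V_i$ is finite and stabilizers of probability measures on projective space are algebraic (both taken from Zimmer), $H^0$ fixes each $V_i$ and each $\nu_i$. Then Zimmer's version of Furstenberg's lemma (a measure giving zero mass to all proper subspaces has compact stabilizer in $PGL$) gives directly that the image of $H^0|_{V_i}$ in $PGL(V_i)$ is relatively compact, and one lifts to $SL(V_i)$ exactly as you do.

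Your argument instead unfolds this into an induction on $\dim V$: you take the Furstenberg contraction dichotomy (compact projective image versus support contained in finitely many proper hyperplanes) as the basic input, then use the irreducible components of the Zariski closure of $\supp(\nu)$ to produce the $H^0$-invariant proper subspaces $U_i$ on which to induct. This makes your proof essentially self-contained --- you are effectively reproving the ``zero mass on proper subspaces $\Rightarrow$ compact stabilizer'' statement rather than citing it --- at the cost of more bookkeeping (verifying that each irreducible component lies in a proper linear subspace, that the restricted measure $\bar\nu_i$ still spans $U_i$, and that the inductive hypothesis applies cleanly to $H^0|_{U_i}$ via Zariski density). The paper's route is shorter given the black boxes from Zimmer; yours is more elementary and explicit about where compactness actually comes from. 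Both ultimately rely on the same contraction phenomenon and the same $SL$-lifting trick for the commutator.
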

\begin{proof}
We can find finitely many  positive measures $(\nu_i)_{1\leq i\leq r}$ on 
 ${\mathbf P}(V)$ with  $\nu=\sum_{1\leq i\leq r}\nu_i$
  such that $\nu(V_i\cap V_j)= 0$ for $i\neq j$ and such that $\supp(\nu_i)\subset  \pi(V_i)$ for every $i\in\{1,\dots, r\},$
 where $V_i$ is a linear subspace of $V$ of minimal dimension with
 $\nu_i( \pi(V_i))>0.$ 
 The $H$-orbit of $V_i$ and hence the $H$-orbit  of $\nu_i$ is finite
 (see  Proof of  Corollary 3.2.2 in \cite{Zimmer}).
 Since stabilizers of probability measures on  ${\mathbf P}(V)$ 
 are algebraic  (see  Theorem  3.2.4 in \cite{Zimmer}), 
 it follows that $H^0$ stabilizes each $V_i$ and each $\nu_i.$  
 Now $\nu_i$, viewed as measure on  ${\mathbf P}(V_i)$,
 is zero on every proper projective subspace of   ${\mathbf P}(V_i)$.
 Hence (see  Corollary 3.2.2 in \cite{Zimmer}),  the image of the  restriction $H_i^0$ of $H^0$
 to   $V_i$ is a relatively compact subgroup of $PGL( V_i),$   for every $i\in\{1,\dots, r\}.$
 Since $[H_i^0,H_i^0]$ is contained in $SL(V_i),$ it follows that 
  $\overline{[H_i^0,H_i^0]}$ is  compact in   $GL(V_i).$
  This implies that $\overline{[H^0,H^0]}$  is compact.
  As  $H^0/ \overline{[H^0,H^0]}$ is abelian, it follows that 
 $H^0$ (and hence $H$) is amenable.$\bsq$
\end{proof}

\begin{remark} The conclusion of  the previous lemma
 does not hold in general if we replace  $H^0$ by
an arbitrary subgroup of finite index of $H.$
For example, let $V= \RRR e_1 \oplus \RRR e_2$ and let 
$H\subset GL_2(\RRR)$ be the stabilizer of the measure
$\nu= (\delta_{\pi(e_1)} + \delta_{\pi(e_2)})/2$ on ${\mathbf P}(V).$
Then $[H,H]=H$ is not bounded; however, $H^0$ is the 
subgroup of index two consisting 
of the  diagonal matrices in $H$ and $[H^0, H^0]$ is trivial.
\end{remark}
 
\begin{proposition}
 \label{Pro-InvMean}
Let $H$ be a subgroup of $GL(V)$ and $V(H)$ the largest $H$-invariant
susbpace of $V$ such that $\overline{H_{V(H)}}$ is amenable.
\begin{itemize}
 \item [(i)] Assume $H$ stabilizes a mean $m$ on $V\setminus \{0\}.$
Then $V(H)\neq \{0\}.$
\item[(ii)] Let $\Delta$ be a lattice in $V$ and $m$  a mean
on $\Delta\setminus \{0\}.$ Assume $H$ leaves $\Delta$ invariant and
stabilizes $m.$ Then $m (V(H)\cap \Delta) =1.$ In particular,
the $\RRR$-linear span of $V(H)\cap \Delta$  is a non-trivial rational 
subspace of $V$ (for the rational structure defined by $\Delta$).
\end{itemize}
\end{proposition}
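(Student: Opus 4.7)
Both parts begin with the same move: push the mean $m$ forward along the canonical projection $\pi : V \setminus \{0\} \to \mathbf{P}(V)$ to obtain an $H$-invariant probability measure $\nu = \pi_*(m)$ on the compact projective space. The proof of Lemma~\ref{Lem-Furstenberg} provides a decomposition $\nu = \sum_{i=1}^r \nu_i$ with $\supp(\nu_i) \subset \pi(V_i)$, where each $V_i$ is a subspace of minimal dimension satisfying $\nu_i(\pi(V_i)) > 0$, and where the $H$-orbit of each $V_i$ is finite. Since $H^0 = H \cap \ZC(H)^0$ is a normal subgroup of finite index in $H$, it stabilizes every $V_i$, and the argument of Lemma~\ref{Lem-Furstenberg} applied to the restricted action of $\overline{(H^0)_{V_i}}$ on $V_i$ together with the measure $\nu_i$ (which by minimality charges no proper projective subspace of $\mathbf{P}(V_i)$) shows that $\overline{(H^0)_{V_i}}$ is amenable.

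The plan is to upgrade this into amenability of $\overline{H_{W_i}}$ for a suitable $H$-invariant subspace $W_i \supset V_i$. Choose $g_1,\ldots,g_k\in H$ enumerating the finite $H$-orbit of $V_i$, and set $W_i = g_1 V_i + \cdots + g_k V_i$; this is $H$-invariant. By normality of $H^0$, each summand $g_j V_i$ is $H^0$-invariant and the induced action is conjugate to the action of $H^0$ on $V_i$, so $\overline{(H^0)_{g_j V_i}}$ is amenable for every $j$. Along the $H^0$-invariant flag $\{0\} \subset g_1 V_i \subset g_1 V_i + g_2 V_i \subset \cdots \subset W_i$, each successive quotient is a quotient of some $g_{j+1} V_i$ and therefore also has amenable $H^0$-image. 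Iterating Lemma~\ref{Lem1-AmenableQuotient} yields that $\overline{(H^0)_{W_i}}$ is amenable. Since $(H^0)_{W_i}$ has finite index in $H_{W_i}$, taking closures preserves this finite index, and so $\overline{H_{W_i}}$ is amenable as well. By the maximality property defining $V(H)$ we conclude that $W_i \subset V(H)$; as the decomposition of $\nu$ is non-empty, at least one $V_i$ is non-zero, and therefore $V(H) \neq \{0\}$, establishing (i).

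For (ii) I would use the same decomposition to observe that $\nu$ is supported on $\bigcup_i \pi(V_i) \subset \mathbf{P}(V(H))$. Since $\pi^{-1}(\mathbf{P}(V(H))) \cap (\Delta \setminus \{0\}) = (V(H) \cap \Delta) \setminus \{0\}$, this gives $m(V(H) \cap \Delta) = \nu(\mathbf{P}(V(H))) = 1$. In particular, $V(H) \cap \Delta$ must contain non-zero vectors, so its $\RRR$-linear span is a non-trivial subspace of $V$ which, being generated by lattice points, is automatically rational. The main obstacle is the amenability upgrade in the middle paragraph: Lemma~\ref{Lem-Furstenberg} only produces amenability of the restriction to each $V_i$ under $H^0$, and propagating this through the finite $H$-orbit of $V_i$ (to reach an $H$-invariant subspace) and then back from $H^0$ to $H$ relies crucially on the normality of $H^0$ in $H$, on the filtration device of Lemma~\ref{Lem1-AmenableQuotient}, and on the stability of amenability under finite-index extensions.
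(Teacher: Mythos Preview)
Your argument for (i) is correct, though more elaborate than necessary. The paper simply takes $W$ to be the linear span of $\pi^{-1}(\supp(\nu))$; this subspace is automatically $H$-invariant (since $\nu$ and hence $\supp(\nu)$ are $H$-invariant), and $\nu$ is by construction not supported on any proper projective subspace of $\mathbf P(W)$, so Lemma~\ref{Lem-Furstenberg} applies directly to $\overline{H_W}$ without the detour through $H^0$, orbits of $V_i$'s, and the filtration argument.

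Your argument for (ii), however, has a genuine gap at the sentence ``this gives $m(V(H)\cap\Delta)=\nu(\mathbf P(V(H)))=1$.'' The probability measure $\nu$ on $\mathbf P(V)$ is obtained from the mean $m$ via Riesz representation on $C(\mathbf P(V))$; for a closed set $C\subset\mathbf P(V)$ one only has
\[
\nu(C)=\inf_{\varphi\in C(\mathbf P(V)),\ \varphi\ge 1_C} m(\varphi\circ\pi)\ \ge\ m\bigl(\pi^{-1}(C)\cap(\Delta\setminus\{0\})\bigr),
\]
which is the wrong inequality. Concretely: if $m$ is a weak-$*$ limit along $(n,1)\in\ZZ^2$ as $n\to\infty$, then $\nu=\delta_{[e_1]}$ is fully supported on the line $\RRR e_1$, yet $m\bigl((\RRR e_1\cap\ZZ^2)\setminus\{0\}\bigr)=0$. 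A finitely additive mean on a lattice can ``escape to infinity'' through lattice points lying \emph{outside} a subspace while still projecting, in the limit, into that subspace on $\mathbf P(V)$. So knowing $\supp(\nu)\subset\mathbf P(V(H))$ does not force the mean itself to sit on $V(H)\cap\Delta$.

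The paper's proof of (ii) avoids this trap entirely. It argues by contradiction: if $\alpha:=m(V(H)\cap\Delta)<1$, push $m$ forward along the canonical projection $p:\Delta\to\overline\Delta:=p(\Delta)\subset V/V(H)$, renormalise off $\{0\}$ to obtain an $H$-invariant mean $\overline{m_1}$ on $\overline\Delta\setminus\{0\}$, and then apply part (i) to $\overline V=V/V(H)$. This yields $\overline V(H)\neq\{0\}$; its preimage $W=p^{-1}(\overline V(H))$ strictly contains $V(H)$ and, by Lemma~\ref{Lem1-AmenableQuotient}, has $\overline{H_W}$ amenable, contradicting maximality of $V(H)$. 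The essential point is that the passage from $m$ to the quotient mean works purely at the level of finitely additive means on discrete sets, where no Riesz-type identification is needed.
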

\begin{proof}
(i) Let $\pi: V\setminus\{0\} \to {\mathbf P}(V)$ be
 the canonical projection and $\nu=\pi_*(m).$  Then $\nu$
is an $H$-invariant probability measure on ${\mathbf P}(V).$
Let $W$ the linear span of  $\pi^{-1} (\supp(\nu)).$
Then $W$ is non-trivial and  $\nu$ is not supported on a proper
projective subspace of $\pi(W).$ It follows from Lemma~\ref{Lem-Furstenberg}
applied to the closed subgroup  $\overline{H_W}$ of $GL(W)$ that 
$\overline{H_W}$ is amenable. Hence, $V(H)\neq \{0\}$, by the definition
of $V(H).$

(ii) 
Set  $\overline V= V/V(H).$ 
Since $V(H)$ is $H$-invariant, we have an induced action of
$H$ on $\overline V.$ Denote by $p: V\to \overline{V}$
the canonical projection.
We consider the mean $\overline{m}=(p|_\Delta)_*(m)$ on the set 
$\overline{\Delta}:=p(\Delta)$ equipped with the discrete topology.
Observe that  $\overline{m}$ is $H$-invariant, since $H$ stabilizes $m.$

Assume, by contradiction, that $m (V(H)\cap \Delta)<1.$
Then $\overline{m}(\{0\}) = m (V(H)\cap \Delta)<1.$
Setting $\alpha= m (V(H)\cap \Delta),$ we define an $H$-invariant mean
$\overline{m_1}$ on $\overline{\Delta} \setminus \{0\}$ by 
$$
\overline{m_1} (\vfi) = \frac{1}{1-\alpha} \overline{m} (\vfi)\tout \vfi\in \ell^\infty(\overline{\Delta} \setminus \{0\}).
$$
Let  $i_*(\overline{m_1})$ be the mean on $\overline{V}\setminus \{0\}$
induced by the canonical injection 
$i: \overline{\Delta}\setminus \{0\} \to \overline{V}\setminus \{0\}.$
Observe that  $i_*(\overline{m_1})$ is $H$-invariant. Hence, by 
(i), we have $\overline{V}(H)\neq \{0\}.$
This implies  that $V(H)$ is a proper subspace of
the  vector space $W:=p^{-1} (\overline{V}(H)).$
On the other hand, $\overline{H_W}$ is amenable, by Lemma~\ref{Lem1-AmenableQuotient}.
This contradicts the definition of $V(H).$ $\bsq$
\end{proof}

At this point, we can give the proof of the fact that
(i) implies (ii) in  Theorem~\ref{Theo3} (or, equivalently,
in Theorem~\ref{Theo4}) in the case of  group of automorphisms.

\n
\textbf{Proof of   $(i) \Longrightarrow (ii)$   in Theorem~\ref{Theo4}}

Let  $H$ be a countable subgroup of $GL_d(\ZZ).$
Assume that  the action of $H$ on $T=\RRR^d/\ZZ^d$ does not have a spectral gap.
Then the unitary representation of 
the transposed subgroup $H^t$ on $\ell^2 (\ZZ^d\setminus\{0\})$ 
weakly contains the trivial representation $1_{H^t}.$
Hence, there exists an $H^t$-invariant mean on $\ZZ^d\setminus\{0\}.$
By Proposition~\ref{Pro-InvMean}, the linear span $W$ of $V(H^t)\cap \ZZ^d$  is a non-trivial rational 
subspace of $\RRR^d.$  Morever, $H^t_W= \overline{H^t_W}$ is amenable. $\bsq$

\section{Proof   of $(ii) \Longrightarrow (iii)$   in Theorem~\ref{Theo4}} 
\label{S13}

For the proof  of $(ii) \Longrightarrow (iii)$   in Theorem~\ref{Theo4},
we will need a precise description of the subspace
$V(H)$ associated to a  subgroup
$H$ of $GL(V)$  and introduced in  Proposition~\ref{Pro-AmenableQuotient}.
For this, we will use the following  result  which appears as Lemma~1
and Lemma~2 in 
\cite{CoGu}.  Since the  arguments  in  \cite{CoGu}
are slightly incomplete, we give the proof of this lemma.

\begin{lemma}
\label{Lem-CoGu}
Let  $V$ be  finite-dimensional real vector space
and let $H$ be a subgroup of $GL(V)$  such that the
action of $H$ on $V$ is completely reducible.
\begin{itemize} 
\item[(i)]
Assume that the  eigenvalues of every 
element in $ H$ all have modulus 1. Then $H$ is relatively compact.
\item[(ii)] Assume that there exists an integer $N\geq 1$ such that the  eigenvalues of every 
element in $ H$  are all $N$-th roots of unity. Then $H$ is finite.
\end{itemize}
\end{lemma}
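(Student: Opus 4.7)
For part (i), the plan is to reduce compactness of $H$ to a compactness statement about the Zariski closure $G := \ZC(H) \subset GL(V)$. Using complete reducibility I would decompose $V = \bigoplus V_i$ into $H$-irreducible summands; since both the eigenvalue hypothesis and the relative-compactness conclusion survive the block-diagonal embedding $H \hookrightarrow \prod_i GL(V_i)$, this reduces to the case where $H$ acts irreducibly on $V$. Then $G$ and $H$ share the same invariant subspaces, so $G$ also acts completely reducibly and is therefore reductive (in characteristic zero, complete reducibility of a faithful rational representation characterizes reductivity). The goal becomes showing that $G(\RRR)$ is compact, equivalently that $G$ is $\RRR$-anisotropic.

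To achieve this I would use the Jordan--Chevalley decomposition inside $G$: each $h \in H$ writes as $h = h_s h_u$ with $h_s, h_u \in G$ commuting, $h_s$ semisimple and $h_u$ unipotent. The eigenvalue hypothesis forces all eigenvalues of $h_s$ to have modulus $1$. The core of the argument is to prove that $h_u = I$ for every $h \in H$. Otherwise, $\log(h_u)$ is a non-zero nilpotent in $\mathrm{Lie}(G)$ and, by Jacobson--Morozov, extends to an $\mathfrak{sl}_2$-triple, producing a non-trivial $\RRR$-split torus in $G$; the corresponding unipotent one-parameter subgroup makes $\{h^n\}$ grow polynomially in operator norm, and combining this growth with the bounds $|\mathrm{tr}(h^k)| \leq \dim V$ (a direct consequence of the eigenvalue hypothesis) and the Zariski-density of $H$ in $G$ excludes non-trivial $\RRR$-split tori in $G$. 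Once every $h \in H$ is shown to be semisimple with eigenvalues of modulus $1$, a convex-averaging argument on the cone of positive definite Hermitian forms on $V \otimes_{\RRR} \CCC$---using complete reducibility to secure an $H$-fixed form---produces an $H$-invariant positive definite Hermitian form, so $H$ lies in a compact unitary subgroup of $GL(V \otimes_{\RRR} \CCC)$ and is therefore relatively compact.

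Part (ii) would then follow quickly from (i). Every $N$-th root of unity has modulus $1$, so the hypotheses of (i) hold and $\overline{H}$ is a compact subgroup of $GL(V)$. In a compact linear group every element is conjugate to a unitary matrix, hence diagonalizable over $\CCC$; a diagonalizable matrix whose eigenvalues are $N$-th roots of unity satisfies $h^N = I$, so $H$ has exponent dividing $N$. By Burnside's classical theorem on torsion linear groups of bounded exponent (the positive solution of the Burnside problem for linear groups in characteristic zero), $H$ is finite.

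The hard part is (i), specifically the propagation of the eigenvalue condition from the Zariski-dense subgroup $H$ to the algebraic structure of $G$. The condition \emph{``all eigenvalues have modulus $1$''} is only real-semialgebraic, not Zariski-closed, so it does not automatically extend to $G$; combining the Jordan--Chevalley decomposition inside $G$ with the structure theory of reductive real algebraic groups (Jacobson--Morozov, classification of $\RRR$-anisotropic forms) is what will replace the step identified as incomplete in \cite{CoGu}.
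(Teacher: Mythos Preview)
Your route through the Zariski closure and $\RRR$-anisotropy is quite different from the paper's, and the core step has a genuine gap. You want to rule out $h_u \neq I$ by combining the polynomial growth of $\Vert h^n\Vert$ with the trace bound $|\Tr(h^k)| \leq \dim V$, but these two facts are not in tension: the unipotent matrix $\left(\begin{smallmatrix}1&1\\0&1\end{smallmatrix}\right)$ has $\Tr(h^k)=2$ for all $k$ while $\Vert h^k\Vert$ grows linearly. Jacobson--Morozov does manufacture an $\RRR$-split torus in $G$, but nothing in your argument forces an element of $H$ itself (rather than of $G(\RRR)$) to have an eigenvalue off the unit circle, and as you correctly observe, the modulus-$1$ condition is not Zariski-closed, so density of $H$ in $G$ does not transport it. The later ``convex-averaging'' step is likewise incomplete: knowing that each $h$ is individually unitarisable does not by itself yield a common invariant Hermitian form without an a priori compactness or amenability input, which is exactly what you are trying to prove.

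The paper's argument is far more elementary and sidesteps algebraic-group structure theory entirely. After the same reduction to an irreducible $H\subset GL_d(\CCC)$, Burnside's theorem on matrix algebras gives elements $h_1,\dots,h_{d^2}\in H$ forming a linear basis of $M_d(\CCC)$; the functionals $x\mapsto \Tr(h_i x)$ are then a basis of the dual. The decisive observation, absent from your outline, is that $h_i h\in H$ for every $h\in H$ by group closure, so the eigenvalue hypothesis applies to $h_i h$ and gives $|\Tr(h_i h)|\leq d$. Thus all linear coordinates of $H$ in $M_d(\CCC)$ are bounded, and $H$ is relatively compact. Part (ii) falls out of the same computation: $\Tr(h_i h)$ is a sum of $d$ many $N$-th roots of unity and therefore takes only finitely many values, so $H$ is finite --- no appeal to the Burnside bounded-exponent theorem is needed. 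Your derivation of (ii) from (i) is correct, but the paper's route is both shorter and independent of (i).
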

\begin{proof}
\noindent
By hypothesis, we can 
decompose $V$ into a direct sum  $V=\oplus_{1\leq i\leq r} V_i$
of irreducible $H$-invariant subspaces $V_i.$ 
Let $V^\CCC=V\otimes_\RRR \CCC$ be the
complexification of $V.$ The action of $H$ on 
each $V_i$ extends to 
a representation   of $H$ on $V_i^\CCC$
which either is  irreducible or decomposes
as a direct sum of two irreducible (mutually conjugate)
representations  of  $H.$
It suffices therefore to prove the following

\n
\textbf{Claim:} 
Let $H$ be a subgroup of $GL_d(\CCC)$ acting 
irreducibly on $\CCC^d.$ Then the conclusion (i) and (ii) hold.

 For every $h\in H,$ we consider the linear functional $\vfi_h$ on the 
algebra $M_d(\CCC)$ of  complex $(d\times d)$-matrices  defined by
$\vfi_h(x)= \Tr(hx).$
Since $H$ acts irreducibly, it follows from Burnside theorem
that the algebra generated by $H$ coincides with $M_d(\CCC).$
Hence, there exists a basis $\{h_1, \dots,  h_{d^2}\}$ of 
the vector space $M_d(\CCC)$ contained in $H.$
Then $\{\vfi_{h_1}, \dots, \vfi_{ h_{d^2}\}}$ is a basis of the dual
space of  $M_d(\CCC).$ 

Assume that the  eigenvalues of every 
element in $H$ all have modulus 1.  
Then the $\vfi_{h_i}$'s  are bounded on $H$ by  $d$.
 It follows that the matrix coefficients  of 
 the elements in $H$ are bounded. Hence,
 $H$ is  relatively compact  subset of   $M_d(\CCC).$

Assume that, for a fixed  $N\geq 1,$  the  eigenvalues of every 
element in $H$ are $N$-th roots of unity..  
Then the $\vfi_{h_i}$'s   take only a finite set
of values on  $H.$ 
 It follows that $H$ is finite subset of   $M_d(\CCC).\bsq$

\end{proof}

\begin{proposition}
\label{Pro-AmenableQuotient-Bis}
Let $V$ be a finite-dimensional real vector space 
and $H$  a   subgroup $H$ of $GL(V).$
Set $H^0= H\cap \ZC(H)^0$. 
Let $V^1$ be the largest  $H$-invariant 
linear subspace of $V$ such that, for every 
$h\in [H^0, H^0],$ the eigenvalues 
of the restriction of $h$  to $V^1$ all have 
modulus 1. Then $V(H)=V^1.$
\end{proposition}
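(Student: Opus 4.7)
The plan is to establish the two inclusions $V(H) \subseteq V^1$ and $V^1 \subseteq V(H)$ separately.

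For $V(H) \subseteq V^1$, I would consider the restriction morphism $\pi \colon \ZC(H) \to GL(V(H))$, which is a morphism of real algebraic groups since $V(H)$ is $\ZC(H)$-invariant. Its image is $G := \ZC(H_{V(H)})$, and morphisms of algebraic groups preserve Zariski identity components, so $\pi(\ZC(H)^0) \subseteq G^0$, and in particular $\pi(H^0) \subseteq G^0$. By the defining property of $V(H)$, $\overline{H_{V(H)}}$ is amenable; using that the Zariski closure of an amenable subgroup of $GL(V(H))$ is an amenable real algebraic group (via Furstenberg's lemma applied to an invariant probability measure on a flag variety), both $G$ and $G^0$ are amenable. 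The key structural fact I invoke is that, for a Zariski-connected amenable real algebraic subgroup $L \subseteq GL(W)$, every element of $[L,L]$ has all eigenvalues of modulus $1$ on $W$. Applying this to $G^0$ and noting $\pi([H^0,H^0]) \subseteq [G^0,G^0]$ yields the condition in the definition of $V^1$ for every $h \in [H^0,H^0]$, so $V(H) \subseteq V^1$ by maximality.

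For $V^1 \subseteq V(H)$, by Proposition~\ref{Pro-AmenableQuotient} and the maximality of $V(H)$ it suffices to show that $\overline{H_{V^1}}$ is amenable. Set $H' = [H^0, H^0]$, so by the defining property of $V^1$ every element of $H'$ acts on $V^1$ with eigenvalues of modulus $1$. Choose a Jordan--H\"older flag $V^1 = V^1_0 \supsetneq V^1_1 \supsetneq \cdots \supsetneq V^1_r = \{0\}$ of $V^1$ as an $H'$-module and set $\widetilde W = \bigoplus_{i=0}^{r-1} V^1_i / V^1_{i+1}$. Then the action of $H'$ on $\widetilde W$ is completely reducible, and the eigenvalues of each $h \in H'$ on $\widetilde W$ coincide with those on $V^1$, hence are of modulus $1$. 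Lemma~\ref{Lem-CoGu}(i) therefore implies that the image of $H'$ in $GL(\widetilde W)$ is relatively compact. The kernel of the restriction $P \to GL(\widetilde W)$, with $P$ the parabolic subgroup of $GL(V^1)$ stabilizing the flag, is unipotent, so $\overline{H'_{V^1}}$ is an extension of a compact group by a closed subgroup of a unipotent group, and is therefore amenable. Since $H^0_{V^1}/H'_{V^1}$ is abelian and $H^0$ has finite index in $H$, the group $\overline{H_{V^1}}$ is a finite-index extension of an abelian-by-amenable group, hence amenable. By the maximality of $V(H)$, this gives $V^1 \subseteq V(H)$.

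The main technical hurdle is the structural claim used in the first direction: that the derived subgroup of a Zariski-connected amenable real algebraic group acts with eigenvalues of modulus $1$ in any algebraic representation. I would justify it via the Levi decomposition $L = U \rtimes M$ (with $U$ unipotent and $M$ reductive), where the amenability of $L$ forces the semisimple part $[M,M]$ to be compact; on the associated graded of any composition series of $W$ as an $L$-module, the normal unipotent subgroup $U$ acts trivially (by Engel), and $[L,L]$ acts through its image in the compact group $[M,M]$, which has all eigenvalues on the unit circle. Passing from eigenvalues on the associated graded to those on $W$ then gives the claim.
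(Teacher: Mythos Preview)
Your proof is correct, and for the inclusion $V^1 \subseteq V(H)$ it follows the paper's argument almost verbatim: a Jordan--H\"older flag for the $[H^0,H^0]$-module $V^1$, Lemma~\ref{Lem-CoGu}(i) on the subquotients, and the embedding into (compact)$\ltimes$(unipotent).

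For the inclusion $V(H) \subseteq V^1$ you take a genuinely different route. The paper proceeds directly from its Lemma~\ref{Lem-Furstenberg}: amenability of $\overline{H_{V(H)}}$ yields an $H$-invariant probability measure on $\mathbf{P}(V(H))$; taking $W$ to be the span of its support, Lemma~\ref{Lem-Furstenberg} gives that $[H^0,H^0]$ has relatively compact image in $GL(W)$; one then passes to $V(H)/W$ and iterates, producing an $H$-invariant flag on whose subquotients $[H^0,H^0]$ acts isometrically, whence the eigenvalue condition. Your argument instead passes to the Zariski closure $G=\ZC(H_{V(H)})$, invokes the fact that the Zariski closure of an amenable linear group is amenable, and then deduces the eigenvalue condition from the Levi decomposition of $G^0$ (amenability forces $[M,M]$ compact; the unipotent radical acts trivially on irreducible subquotients). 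Both arguments are valid; the paper's is more self-contained since it uses only Lemma~\ref{Lem-Furstenberg}, which has already been established, whereas your route imports the Levi decomposition and the passage of amenability to the Zariski closure---the latter itself typically proved via a Furstenberg-type argument, so the underlying analytic input is the same. Your approach is conceptually cleaner if one is willing to quote the structure theory, while the paper's flag construction keeps the argument elementary and internal to the tools developed in the section.
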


\begin{proof}
Let us first show that $V(H)\subset V^1.$ Since
$\overline{H_{V(H)}}$ is amenable, there exists an 
$H$-invariant probability measure $\nu$ on ${\PP}(V(H)) \subset  \PP(V).$
Let $W$ be the smallest  $H$-invariant subspace such that
$\nu$ is supported on $\PP(W).$ 
It follows from  Lemma~\ref{Lem-Furstenberg} that  $ [H^0, H^0]$
acts isometrically on $W$, with respect to an appropriate norm on 
$W.$  
We can apply the same argument to the group $\overline{H_{V(H)/W}}$ acting on 
  the quotient space $V(H)/W.$  Hence, by induction, we obtain a flag
 $$\{0\}=W_0\subset W=W_1\subset  W_2\subset \cdots\subset W_r= V(H)$$
 of $H$-invariant subspaces such that  $ [H^0, H^0]$
acts isometrically on  each quotient $W_{i+1}/W_i$.
It follows from this that 
the eigenvalues of the restriction to $V(H)$
of  any element $h\in [H^0, H^0]$ have   all
modulus 1.  Hence,  $V(H)\subset V^1.$ 

To show that  $V^1\subset V(H),$  we have to prove that
$\overline{H_{V^1}}$ is  amenable. 
Recall that that  $H/H^0$ is finite and observe that $\overline{H^0_{V^1}}/\overline{[H^0_{V^1}, H^0_{V^1}]} $ 
is abelian. Hence, it suffices to show that  $\overline{[H^0_{V^1}, H^0_{V^1}]} $
is amenable.

Let 
$$\{0\}=W_0\subset W_1 \subset \cdots\subset W_r= V^1$$
be a Jordan-H\"older sequence for the $[H^0_{V^1}, H^0_{V^1}]$-module $V^1,$
that is, every $W_i$ is an $[H^0_{V^1}, H^0_{V^1}]$-invariant subspace of $V^1$
and   $[H^0_{V^1}, H^0_{V^1}]$ acts irreducibly on every quotient $W_{i+1}/W_i.$
By Lemma~\ref{Lem-CoGu}.i,   the image of $ [H^0, H^0]$
in   $GL(W_{i+1}/W_i)$ is relatively compact for every $i\in \{0,\dots, r-1\}.$

Let $N$ be the unipotent subgroup of $GL(V^1)$
consisting of the elements in  $GL(V^1)$ which act trivially
on every quotient   $W_{i+1}/W_i$.

We can choose a scalar product 
on $V^1$ such that, denoting by $W_i^\perp$ the orthogonal complement of $W_i$ in $W_{i+1},$
  every $h\in [H^0, H^0]$  can be written in the form 
$h=kh_0,$ where $h_0\in N$  and where
$k$  leaves 
$W_i^\perp$ invariant and  acts isometrically 
on   $W_i^\perp$ for every $i\in {\{0,\dots, r-1\}},$ 
This shows that  $\overline{[H^0_{V^1}, H^0_{V^1}]} $ 
can be embedded as a  closed subgroup of
$K\ltimes N \subset GL(V^1),$ where 
$K$ is the product of the 
 the orthogonal groups 
of  the $W_i^\perp$'s. 
 Since  $K\ltimes N$
is amenable, the same is true for    $\overline{[H^0_{V^1}, H^0_{V^1}]}. \  \bsq$
\end{proof}

We will need need the following corollary  of (the proof of)  the previous
proposition .

\begin{corollary}
\label{Pro-AmenableQuotient-Discret}
Let  $\Gamma$  be  a   subgroup  of $GL_d(\ZZ).$
Assume that  the eigenvalues  of every 
$\ga\in\Ga$  all have 
modulus 1. Then $\Ga$ contains  a unique maximal
unipotent subgroup  $\Ga^0$ of finite index. In particular, 
$\Ga^0$ is a characteristic subgroup of $\Ga.$
\end{corollary}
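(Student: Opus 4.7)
The plan is to reduce the corollary to Lemma~\ref{Lem-CoGu}(ii) by first producing a uniform integer $N$ such that every eigenvalue of every element of $\Gamma$ is an $N$-th root of unity, and then to assemble $\Gamma^{0}$ from the action of $\Gamma$ on a composition series of $\RRR^{d}$.

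First I would prove the existence of such an $N$. The characteristic polynomial of any $\gamma\in\Gamma\subset GL_{d}(\ZZ)$ lies in $\ZZ[X]$, and since every eigenvalue has modulus one the $k$-th elementary symmetric function of the eigenvalues is bounded in absolute value by $\binom{d}{k}$. Hence only finitely many characteristic polynomials appear as $\gamma$ ranges over $\Gamma$. Each eigenvalue is an algebraic integer all of whose Galois conjugates (being roots of the same integer polynomial) also have modulus one, so by Kronecker's theorem it is a root of unity, and a common multiple of the finitely many orders that arise supplies the desired $N$.

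Next I would fix a Jordan--H\"older composition series
$$\{0\}=W_{0}\subset W_{1}\subset\cdots\subset W_{r}=\RRR^{d}$$
of $\RRR^{d}$ as a $\Gamma$-module, and denote by $\Gamma_{i}\subset GL(W_{i+1}/W_{i})$ the image of $\Gamma$ for $0\leq i\leq r-1$. Each $\Gamma_{i}$ acts irreducibly, hence completely reducibly, on $W_{i+1}/W_{i}$; since all of its eigenvalues are $N$-th roots of unity, Lemma~\ref{Lem-CoGu}(ii) yields that $\Gamma_{i}$ is finite. I then set
$$\Gamma^{0}:=\bigcap_{i=0}^{r-1}\ker\bigl(\Gamma\to\Gamma_{i}\bigr),$$
a normal subgroup of finite index in $\Gamma$. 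By construction, every $\gamma\in\Gamma^{0}$ acts as the identity on each quotient $W_{i+1}/W_{i}$, so in a basis adapted to the flag it is block upper-triangular with identity diagonal blocks, hence unipotent in $GL_{d}(\RRR)$.

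Finally, I would establish uniqueness (and with it the characteristic property) by showing that $\Gamma^{0}$ already contains \emph{every} unipotent subgroup $U$ of $\Gamma$. The image of such a $U$ in the finite group $\Gamma_{i}$ consists of unipotent elements of finite order, but in characteristic zero the only unipotent element of finite order is the identity; hence $U\subset\ker(\Gamma\to\Gamma_{i})$ for every $i$, so $U\subset\Gamma^{0}$. This shows simultaneously that $\Gamma^{0}$ is independent of the chosen composition series, that it is the unique maximal unipotent subgroup of $\Gamma$, and that it is characteristic. The only delicate point is the production of the uniform bound $N$: without it, Lemma~\ref{Lem-CoGu}(i) would deliver merely relative compactness of each $\Gamma_{i}$ and not finiteness, so one could not conclude that $\Gamma^{0}$ has finite index. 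The Kronecker-plus-bounded-coefficients argument is precisely what upgrades (i) to (ii) of Lemma~\ref{Lem-CoGu}, after which the rest is routine.
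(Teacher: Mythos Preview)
Your proof is correct, and its first half---producing the uniform $N$ via bounded integer coefficients and Kronecker, then applying Lemma~\ref{Lem-CoGu}(ii) on the composition factors to get a unipotent normal subgroup of finite index---is essentially identical to the paper's.

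The uniqueness argument is where you diverge. The paper defines $\Gamma^{0}$ intrinsically as $\Gamma\cap\ZC(\Gamma)^{0}$ and proves maximality by an algebraic-group argument: if $\Gamma_{1}$ is any unipotent normal subgroup of finite index, then $\ZC(\Gamma_{1})$ is a unipotent algebraic group, hence Zariski-connected, hence equal to $\ZC(\Gamma)^{0}$; this forces $\Gamma_{1}\subset\Gamma\cap\ZC(\Gamma)^{0}$. Your argument is more elementary and in fact slightly stronger: by observing that a unipotent element of finite order in characteristic zero must be the identity, you show that \emph{every} unipotent subgroup of $\Gamma$ (not just the normal ones of finite index) lies in your $\Gamma^{0}$. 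This avoids algebraic-group machinery entirely and makes the independence from the chosen flag immediate. The paper's route, on the other hand, gives the useful identification $\Gamma^{0}=\Gamma\cap\ZC(\Gamma)^{0}$, which connects the object to the Zariski topology---a description that fits naturally with the surrounding arguments involving $\ZC(H)^{0}$ elsewhere in the paper.
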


\begin{proof}
As in the proof of the previous proposition, 
we consider  a Jordan-H\"older sequence for the $\Ga$-module $\RRR^d$
$$\{0\}=W_0\subset W_1 \subset \cdots\subset W_r= \RRR^d$$
and let $N$  be the subgroup of all  $g\in GL(V)$ 
which act trivially on every $W_{i+1}/W_i.$ We choose a scalar product on $\RRR^d$ 
such that  $\Ga$  embeds as a subgroup of the semi-direct product 
$K\ltimes N$ for $K= \prod_{i=1}^d O(W_i^\perp),$
where $W_i^\perp$ is the orthogonal complement of $W_i$ in $W_{i+1}.$

Let $\ga\in \Ga.$  For every $l\geq 1,$
the $l$-th powers of the eigenvalues of $\ga$ are roots
of the same monic polynomial with integer coefficients
and of degree $d.$ 
Since the eigenvalues of $\ga$ are all of modulus 1,
the coefficients of this polynomial are bounded
by a number only depending on $d.$
By a standard argument (see e.g. 
the proof of Lemma~{11.6} in
\cite{Stewart}), it follows  that all the eigenvalues 
of $\ga$ are roots of unity of a fixed order $N$   
which only depends on $d.$

Let  $\overline\Ga$ be the projection of $\Ga$ in  $K.$ 
The action of $\overline\Ga$  is completely
reducible, since the $W_i^\perp$'s  are  irreducible, and 
it follows from Lemma~\ref{Lem-CoGu}.ii that $\overline \Ga$
is finite. Hence, $\Ga\cap N$ is a
unipotent  normal subgroup of finite index in $\Ga.$

We have therefore proved that $\Ga$  contains
a unipotent  normal subgroup of finite index.
We claim that $\Ga^0:= \Ga \cap \ZC(\Ga)^0$ is the unique
maximal unipotent  normal subgroup of finite index  in $\Ga.$

Indeed, let $\Ga_1$ be a   unipotent  normal subgroup of finite index  in $\Ga.$
Set  $U:=\ZC(\Ga_1).$  Observe that 
the connected component of $U$ coincides
with $\ZC(\Ga)^0,$ since $\Ga_1$ has finite index
in $\Ga.$   On the other hand, as is well-known,  $U$ is  connected
since it is a unipotent  algebraic group.
(Indeed, the Zariski closure of the subgroup generated by a unipotent element $u\in GL(\RRR^d)$
 contains the one-parameter subgroup through $u$; see e.g.  15.1. Lemma C in \cite{Humphreys}.)
 It follows that $\ZC(\Ga)^0=U$ is unipotent.
  Moreover, since $\Ga_1\subset U,$ we have $\Ga_1\subset \Ga^0$ and the
  claim is proved.  $\bsq$
  \end{proof}

We can now  complete  the proof of Theorem~\ref{Theo4}.

\n
\textbf{Proof of   $(ii) \Longrightarrow (iii)$   in Theorem~\ref{Theo4} }

Let $T=V/\Delta$ be a torus and 
$H$ a subgroup of $\Aut (T) \subset GL(V).$
Assume that there exists a  non-trivial  rational subspace $W$
of $V$ which is $H$-invariant and such that  
such that the restriction $H_W$ of $H$ to $W$ is 
an amenable group. In particular, we have $W\subset V(H).$

Set $H^0= H\cap \ZC(H)^0$.
By Proposition~\ref{Pro-AmenableQuotient-Bis},
 for every  $h\in [H^0, H^0],$ all the eigenvalues 
of the restriction of $h$  to $W$  have 
modulus 1. Since $W$ is rational, 
by the choice of a convenient  basis of $W,$ we can assume
that $\Ga: =[H^0, H^0] _W$ is a subgroup of $GL_d(\ZZ),$ 
where $d=\dim W.$   
It follows from Corollary~\ref{Pro-AmenableQuotient-Discret}
that  $\Ga$ contains  a
unipotent subgroup  $\Ga^0$ of finite index 
which is moreover characteristic.   Let $W_1$
be the space of the $\Ga^0$-fixed vectors in $W.$ 
Then $W_1$ is   a rational and non-trivial 
linear subspace of $W.$  Moreover,  $W_1$ is 
$H$-invariant, since $\Ga^0$ is characteristic.

We claim that $H_{W_1}$  is virtually abelian.
For this, it suffices to show that  $G:=H^0_{W_1}\subset GL(W_1)$ is virtually abelian. Observe first that    $[G,G]=\Ga_{W_1}$ 
is finite, since it is a quotient of the finite group $\Ga/ \Ga^0.$ 
Since $ [\ZC(G), \ZC(G)] \subset \ZC([G,G]),$ 
it follows that   $[\ZC(G), \ZC(G)]$ is finite.
On the other hand,  
the group $[\ZC(G)^0, \ZC(G)^0]$ is connected (see e.g. 
Proposition~17.2 in \cite{Humphreys}). Hence,  $\ZC(G)^0$ is
abelian. The subgroup $G\cap \ZC(G)^0$  has finite
index in $G$ and is abelian.
$\bsq$

\section{Herz's majoration principle for induced representations}      
\label{S-Herz}
Unitary representations of  a separable locally compact group
 $G$ induced by a closed subgroup $H$  will appear
several times in the sequel. We review their definition
when the homogeneous  space  $H\bs G$
has  $G$-invariant  measure.
This will always be the case in the situations we will encounter.
(Induced representation are still defined in the general case,
after appropriate change; see \cite{Mackey-Livre} or \cite{BHV}.)

Let $\nu$ be   non-zero $G$-invariant  measure  on $H\bs G.$
Let $(\sigma,\K)$ be a unitary representation
of $H.$ We will use the following model
for the induced representation $\ind_H^G \sigma$.
Choose a measurable section  $s: H\bs G\to G$ 
for the canonical projection $G\to H\bs G.$ 
Let $c:  (H\bs G)\times G\to H$ be the corresponding
cocycle defined by 
$$s(x)g= c(x,g) s(xg) \tout x\in  H\bs G,\ g\in G.
$$
The Hilbert space of $\ind_H^G \sigma$ is 
the space $L^2( H\bs G, \K)$  of 
all square-integrable measurable mappings $\xi: H\bs G\to \K$ and the 
 action of $G$ on $L^2( H\bs G, \K)$ is given by
$$
(\ind_H^G \sigma)(g) \xi(x)= \sigma (c(x,g))\xi(xg),\qquad g\in G,\ \xi\in L^2( H\bs G, \K),\ x\in G/H.
$$ 

In the sequel, we will use several times 
a well-known strengthening of Herz's  majoration principle from \cite{Herz}
concerning norms of convolution operators under an induced representation.
For an even more general version, see  \cite[2.3.1]{Claire}. 
For the convenience of the reader, we give  the short proof.
\begin{proposition} \textbf{(Herz's majoration principle)}
\label{Prop3}
\label{Pro-Herz}
Let $H$ be a closed subgroup of $G$ 
such that $H\bs G$ has a $G$-invariant Borel measure $\nu$  
and let  $(\sigma,\K)$  be a unitary representation
of $H.$  For every probability measure $\mu$ on 
the Borel subsets of $G,$
we have
$$\Vert (\ind_H^G\sigma)(\mu)\Vert\leq \Vert\rho_{G/H}(\mu)\Vert,
$$
where $\la_{G/H}$ is the natural representation of $G$ on $L^2(G/H).$
\end{proposition}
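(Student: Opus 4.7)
The plan is to use the explicit induced-representation model given just above the statement and to compare it, fiber by fiber, with the quasi-regular representation on $L^2(H\backslash G)$. The proof should be entirely elementary once one writes things out; the only mildly delicate point is to pass from a pointwise (Bochner) inequality on $H\backslash G$ to a norm inequality on $L^2(H\backslash G,\K)$.

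Concretely, given $\xi\in L^2(H\backslash G,\K)$, I would introduce the associated ``absolute value'' $|\xi|\in L^2(H\backslash G)$ defined by $|\xi|(x)=\Vert\xi(x)\Vert_\K$; note $\Vert|\xi|\Vert_{L^2(H\backslash G)}=\Vert\xi\Vert_{L^2(H\backslash G,\K)}$. The action of the induced representation on $\xi$ evaluated at $x\in H\backslash G$ is, by definition,
\[
\bigl((\ind_H^G\sigma)(\mu)\xi\bigr)(x)=\int_G \sigma(c(x,g))\,\xi(xg)\,d\mu(g).
\]
Since each $\sigma(c(x,g))$ is unitary on $\K$, the triangle inequality for the Bochner integral yields
\[
\Bigl\Vert\bigl((\ind_H^G\sigma)(\mu)\xi\bigr)(x)\Bigr\Vert_\K\leq\int_G \Vert\xi(xg)\Vert_\K\,d\mu(g)=\int_G |\xi|(xg)\,d\mu(g)=\bigl(\la_{G/H}(\mu)|\xi|\bigr)(x),
\]
where $\la_{G/H}(g)f(x)=f(xg)$ is the quasi-regular representation (matching the right action used in the cocycle definition of $\ind_H^G\sigma$; this explains the typographical $\rho_{G/H}/\la_{G/H}$ in the statement).

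Squaring and integrating against $\nu$ on $H\backslash G$ then gives
\[
\Vert(\ind_H^G\sigma)(\mu)\xi\Vert^2\leq\Vert\la_{G/H}(\mu)|\xi|\Vert^2\leq\Vert\la_{G/H}(\mu)\Vert^2\,\Vert|\xi|\Vert^2=\Vert\la_{G/H}(\mu)\Vert^2\,\Vert\xi\Vert^2,
\]
and taking the supremum over unit vectors $\xi$ finishes the argument.

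The only real obstacle is the routine but necessary check that $x\mapsto |\xi|(x)$ is measurable and that the Bochner integral defining $(\ind_H^G\sigma)(\mu)\xi$ makes sense for $\mu$-almost every fiber; this is handled by approximating $\mu$ by finitely supported measures (for which every step above is literal and finite), observing that the inequality is preserved under weak limits, and using separability of $G$ to ensure the standard measurability framework for $L^2(H\backslash G,\K)$ applies. No new ideas are needed beyond the pointwise domination $|(\ind_H^G\sigma)(\mu)\xi|\leq \la_{G/H}(\mu)|\xi|$, which is the entire content of Herz's principle in this setting.
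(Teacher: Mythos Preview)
Your proof is correct and follows essentially the same approach as the paper: introduce the scalar function $\vfi(x)=\Vert\xi(x)\Vert_\K$, use unitarity of $\sigma$ to drop the cocycle, and dominate by the quasi-regular action on $\vfi$. The only cosmetic difference is that you invoke the triangle inequality for the Bochner integral to get the pointwise bound $\Vert((\ind_H^G\sigma)(\mu)\xi)(x)\Vert_\K\leq (\la_{G/H}(\mu)\vfi)(x)$, whereas the paper phrases the estimate via Jensen's inequality; your version is in fact slightly cleaner, since it lands directly on $\Vert\la_{G/H}(\mu)\vfi\Vert^2$ without the intermediate $\int\!\!\int\Vert\xi(xg)\Vert^2\,d\mu\,d\nu$ term.
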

\begin{proof}
Let  $c: H\bs G\to H$ be
the cocycle defined by a Borel section of $H\bs G\to G.$
For $\xi\in L^2(H\bs G, \K, \nu),$ define $\vfi$ in the Hilbert space  $L^2(H\bs G, \nu),$
 of $\ind_H^G \sigma $
by $\vfi(x)=\Vert\xi(x)\Vert$  and observe that  
$\Vert \vfi\Vert= \Vert \xi\Vert.$
Using Jensen's inequality, we have
\begin{align*}
\Vert(\ind_H^G \sigma)(\mu) \xi\Vert^2 
&= \int_{H\bs G}\Vert(\ind_H^G(\mu)\xi(x))\Vert^2 d\nu(x) \\
&= \int_{H\bs G}\Vert\int_G \sigma (c(x,g))\xi(xg) d\mu(g)\Vert^2 d\nu(x) \\
&\leq \int_{H\bs G}\int_G \Vert\sigma (c(x,g))\xi(xg)\Vert^2 d\mu(g) d\nu(x) \\
&= \int_{H\bs G}\int_G \Vert\xi(xg)\Vert^2 d\mu(g) d\nu(x) \\
&= \Vert(\ind_H^G 1_H)(\mu)\vfi\Vert^2 .
\end{align*}
Since  $\ind_H^G1_H$ is equivalent to $\la_{G/H},$ the claim follows. $\bsq$
\end{proof}

We will also need  (in Section~\ref{S5}) a  precise description
of the kernel of an  induced representation.

\begin{lemma}
\label{Lem-KernelInduced}
 With  the notation as in the previous proposition,
 let $\pi= \ind_H^G\sigma$.
 Then   $\Ker (\pi)= \bigcap_{ g\in G} g\Ker (\sigma)g^{-1}$,
 that is,  $\Ker (\pi) $ coincides the largest 
normal subgroup of $G$ contained in $\Ker \sigma.$
\end{lemma}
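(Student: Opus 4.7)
The plan is to prove the two inclusions separately, after recording the purely group-theoretic fact that for any subset $K\subseteq G$ the intersection $\bigcap_{g\in G} gKg^{-1}$ is the largest normal subgroup of $G$ contained in $K$; applied to $K=\Ker(\sigma)$ this already delivers the second clause of the lemma, so it remains only to prove the set equality $\Ker(\pi)=N$, where $N:=\bigcap_{g\in G}g\Ker(\sigma)g^{-1}$. Note that $N$ is normal in $G$ and, taking $g=e$, is contained in $\Ker(\sigma)\subseteq H$.

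For $N\subseteq \Ker(\pi)$, fix $n\in N$ and $x\in H\bs G$. By normality of $N$ in $G$, the element $s(x)ns(x)^{-1}$ lies in $N\subseteq H$, hence $xn=x$ in $H\bs G$, so $s(xn)=s(x)$ and $c(x,n)=s(x)ns(x)^{-1}\in N\subseteq \Ker(\sigma)$. Therefore $(\pi(n)\xi)(x)=\sigma(c(x,n))\xi(xn)=\xi(x)$, and $\pi(n)=I$.

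For the reverse inclusion, suppose $\pi(g_0)=I$. Testing against vectors of the form $\xi(x)=f(x)v$ with $f\in L^2(H\bs G)$ and $v\in\K$, and taking pointwise norms, yields $|f(xg_0)|=|f(x)|$ for $\nu$-almost every $x$. Letting $f$ range over $C_c(H\bs G)$, and using that the $G$-invariant measure $\nu$ has full support (so every conull Borel set is dense) together with continuity of the right $G$-action on $H\bs G$, this forces $xg_0=x$ for every $x\in H\bs G$; in particular $g_0\in H$ and $gg_0g^{-1}\in H$ for all $g\in G$. With $xg_0=x$ in hand, the cocycle reduces to $c(x,g_0)=s(x)g_0s(x)^{-1}$, and $\pi(g_0)=I$ translates into $\sigma(s(x)g_0s(x)^{-1})v=v$ for $v$ in a countable dense subset of $\K$ (separability of $G$ ensures a common conull $x$-set) and almost every $x$, hence $\sigma(s(x)g_0s(x)^{-1})=I_{\K}$ a.e. Writing an arbitrary $g\in G$ uniquely as $g=hs(x)$ with $h\in H$ and using that $\Ker(\sigma)$ is normal in $H$, we get $gg_0g^{-1}=h(s(x)g_0s(x)^{-1})h^{-1}\in \Ker(\sigma)$ for $g$ in a conull subset of $G$. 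Strong continuity of $\sigma$, closedness of $\Ker(\sigma)$, and density of this conull subset then upgrade the relation to all $g\in G$, so $g_0\in \bigcap_{g\in G}g^{-1}\Ker(\sigma)g=N$.

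The main obstacle lies in this second inclusion: the a.e.\ equality $\pi(g_0)\xi=\xi$ has to be converted into the pointwise statement that every conjugate of $g_0$ in $G$ lies in $\Ker(\sigma)$. This requires two passages from ``almost everywhere'' to ``everywhere'', one on $H\bs G$ (to conclude $xg_0=x$ for all $x$) and one on $G$ (to conclude $gg_0g^{-1}\in \Ker(\sigma)$ for all $g$), and both are handled by density of conull sets in the support plus continuity; separability of $G$ intervenes so that a single conull set works simultaneously for a dense set of test vectors in $\K$.
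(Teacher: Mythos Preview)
Your proof is correct. The easy inclusion $N\subseteq\Ker(\pi)$ is handled exactly as in the paper. For the harder inclusion $\Ker(\pi)\subseteq N$, your route differs from the paper's and is worth comparing. The paper normalizes the section so that $s(H)=e$, argues (by testing with $\xi$ supported near $Ha$) that $a\in H$, and then, evaluating at the single coset $x=H$, obtains $\sigma(a)=I$; finally it invokes the normality of $\Ker(\pi)$ in $G$ to conclude that every conjugate $gag^{-1}$ also lies in $\Ker(\pi)$, hence in $\Ker(\sigma)$ by the same argument. In other words, the paper only ever works at the basepoint and lets normality of $\Ker(\pi)$ do the conjugation for free. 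You instead establish $xg_0=x$ for \emph{all} $x$ via a density argument, deduce $\sigma(s(x)g_0 s(x)^{-1})=I$ for almost every $x$, and then pass from a conull set of conjugators to all of $G$ using closedness of $\Ker(\sigma)$ and continuity of $g\mapsto gg_0 g^{-1}$ (which lands in $H$ once $xg_0=x$ is known). Your approach is more laborious but also more explicit about the a.e.\ versus everywhere passages, which the paper treats rather informally; the paper's use of normality of $\Ker(\pi)$ is the cleaner shortcut you might want to note.
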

\begin{proof}

Let $c: H\bs G\times G\to H$ be the cocycle corresponding
to a measurable section $s:H\bs G\to G$ with $s(H)=e.$
Let $a\in \Ker (\pi).$ Then, for every $\xi\in L^2(H\bs G, \K),$
we have
$$
\sigma(c(x,a))\xi(xa)=\xi(x) \tout   x\in H\bs G.
$$
Taking for $\xi$ mappings supported on a neighbourhood of $Ha,$
we see that  $a\in H.$ Hence $c(H,a)=a$. Taking for $\xi$  continuous mappings
with $\xi(H)\neq 0$ and evaluating at $H,$ 
we obtain that $a\in \Ker (\sigma).$
Since $\Ker (\pi)$ is normal
in $G,$ 
it follows that   $gag^{-1}\in \Ker (\sigma)$ for all
$g\in G.$  

Conversely, let $a\in G$ be such that  $gag^{-1}\in \Ker (\sigma)$ for all
$g\in G.$  Since 
$$
s(x)a= (s(x) as(x)^{-1})s(x) ,$$
we have $c(x,a)= s(x) as(x)^{-1}$  for all $x\in H\bs G.$ 
Hence, for  every $\xi\in L^2(H\bs G,\K )$  and $x\in H\bs G,$ we have
$$
(\pi(a)\xi)(x)=\sigma(c(x,a))\xi(xa)=\sigma(s(x) as(x)^{-1})\xi(x)= \xi(x).
$$
This shows that  $a\in \Ker (\pi)$ and the claim is proved. $\bsq$
\end{proof}

\section{Proof  of Theorem~\ref{Theo3} }
\label{S-ProofGenAff}
Let  $T=V/\Delta$  be a torus and $H$ a countable subgroup 
of  $\Aff(T)=\Aut (T)\ltimes T$. 
The implication $(iii) \Longrightarrow (ii)$ is obvious and 
the implication $(ii) \Longrightarrow (i)$  follows from  \cite{JuRo}. 
The fact that $(ii)$ implies $(iii)$  has been proved in  Theorem~\ref{Theo4}.
Therefore, it remains  to  show that $(i)$ implies $(ii).$
Again by Theorem~\ref{Theo4}, it suffices to show that 
if the action of $H$ on $T$  has no spectral gap,
 then  the same is true for the action of $\paut(H)$  on $T$, where $\paut$ is  the  projection 
 from  $\Aff (T)$ to $\Aut(T).$
This will be  an immediate consequence
of the next proposition.

For a probability measure $\mu$ on $\Aff(T)$,
we denote by  $ \paut(\mu)$ the probability measure on $\Aut (T)$
which is the image of $\mu$ under  
$\paut$.
Let  $U_0$ be the Koopman representation of $\Aff(T)$ on
$L^2_0(T).$

\begin{proposition}
\label{Pro-AffAut}
For every probability measure $\mu$   on $\Aff(T)$,
 we have 
$$\Vert U_0(\mu)\Vert \leq  \Vert U_0(\paut(\mu))\Vert.$$
\end{proposition}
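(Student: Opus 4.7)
The plan is to  Fourier-decompose $L^2_0(T)$ according to the orbits of $\Aut(T)$ on the Pontryagin dual $\widehat T \cong \Delta$, recognize each orbit piece as an induced representation from a point-stabilizer, and then  invoke Herz's majoration principle (Proposition~\ref{Pro-Herz}).

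First I would write $L^2_0(T) = \bigoplus_{\mathcal{O}} \H_\mathcal{O}$, where $\mathcal{O}$ ranges over the orbits of $\Aut(T)$ on $\Delta\setminus\{0\}$ and $\H_\mathcal{O}$ is the closed span of the characters $e_\chi$ with $\chi\in \mathcal{O}$. A direct calculation shows that for $(\vfi,t)\in \Aff(T)$,
\[
U_0(\vfi,t) e_\chi \;=\; e^{-2\pi i \langle (\vfi^{-1})^t \chi, t\rangle}\, e_{(\vfi^{-1})^t \chi},
\]
so each $\H_\mathcal{O}$ is $\Aff(T)$-invariant, and in particular both $U_0(\mu)$ and $U_0(\paut(\mu))$ respect this decomposition. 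It therefore suffices to bound $\|U_0(\mu)|_{\H_\mathcal{O}}\|$ by $\|U_0(\paut(\mu))|_{\H_\mathcal{O}}\|$ orbit by orbit.

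Fixing $\chi_0\in\mathcal{O}$, let $G=\Aff(T)$ and $G_{\chi_0}=\Aut(T)_{\chi_0}\ltimes T$ be the stabilizer of $\chi_0$ under the dual action. A short check (using that $\vfi^t\chi_0=\chi_0$ on the stabilizer) shows that
\[
\sigma_{\chi_0}(\vfi,t)= e^{-2\pi i \langle \chi_0,t\rangle}
\]
defines a unitary character of $G_{\chi_0}$, and the map $e_{g\cdot\chi_0}\mapsto [g\mapsto \sigma_{\chi_0}(\cdot)\text{-equivariant section}]$ gives a unitary equivalence
\[
\bigl(U_0|_{\H_\mathcal{O}}\bigr) \;\simeq\; \ind_{G_{\chi_0}}^{G}\sigma_{\chi_0}.
\]
This is the standard Mackey description; I would only need to verify it cleanly, since nothing here is really difficult.

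Now I would apply Herz (Proposition~\ref{Pro-Herz}) to obtain
\[
\bigl\|U_0(\mu)|_{\H_\mathcal{O}}\bigr\| \;=\; \bigl\|\ind_{G_{\chi_0}}^G\sigma_{\chi_0}(\mu)\bigr\| \;\leq\; \bigl\|\la_{G/G_{\chi_0}}(\mu)\bigr\|.
\]
Since $T\subset G_{\chi_0}$, the homogeneous space $G/G_{\chi_0}$ is naturally identified with $\Aut(T)/\Aut(T)_{\chi_0}$ and the quasi-regular representation $\la_{G/G_{\chi_0}}$ factors through $\paut$; moreover this quasi-regular representation of $\Aut(T)$ on $\ell^2(\mathcal{O})$ is unitarily equivalent (by the same Fourier identification, applied now to the $\Aut(T)$-action alone) to the subrepresentation $U_0|_{\H_\mathcal{O}}$ of $\Aut(T)$ on $L^2_0(T)$. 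Therefore
\[
\bigl\|\la_{G/G_{\chi_0}}(\mu)\bigr\| \;=\; \bigl\|U_0(\paut(\mu))|_{\H_\mathcal{O}}\bigr\| \;\leq\; \|U_0(\paut(\mu))\|.
\]
Taking the supremum over all orbits $\mathcal{O}$ gives the desired inequality. The only real bookkeeping step  is verifying the identification of $U_0|_{\H_\mathcal{O}}$ with $\ind_{G_{\chi_0}}^G\sigma_{\chi_0}$; once that is in place, Herz's principle does all of the work.
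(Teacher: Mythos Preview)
Your proposal is correct and follows essentially the same route as the paper: Fourier-decompose $L^2_0(T)$ into $\Aut(T)$-orbit pieces, identify each piece with an induced representation $\ind_{\Ga_\chi\ltimes T}^{\Ga\ltimes T}\widetilde\chi$, apply Herz's majoration principle, and then observe that the resulting quasi-regular representation $\la_{G/G_{\chi_0}}$ factors through $\paut$ and coincides with the $\Aut(T)$-Koopman representation on that orbit piece. Up to notation (the paper writes $\widetilde\chi$ for your $\sigma_{\chi_0}$ and works on the dual side via $V$), the arguments match step for step.
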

\begin{proof}
Set $\Ga= \Aut(T).$ 
 Let $\widehat{T}\cong \ZZ^d$ be the dual group of $T.$
The Fourier transform sets up a unitary equivalence between
$U_0$ 
and the  representation $V$  of $\Aff(T)$ on
$\ell^2\left(\widehat{T}\setminus\{1_T\}\right)$ 
given by 
$$
V(\ga, a)\chi = \chi(a)\chi^\ga \tout \chi\in  \widehat{T}\setminus\{1_T\}, \ga\in \Ga, \ a\in T,\leqno(*)
$$
 where $\chi^\ga \in \widehat{T}$ is defined by $\chi^\ga(x)=\chi(\ga^{-1} (x)).$
 
 Choose a set of representatives $S$ for the $\Ga$-orbits in
$\widehat{T}\setminus\{1_T\}.$ 
Then $\ell^2\left(\widehat{T}\setminus\{1_T\}\right)$ 
decomposes as the direct sum of $\Aff(T)$-invariant subspaces
$$\ell^2\left(\widehat{T}\setminus\{1_T\}\right)=
\bigoplus_{\chi\in S}  \ell^2(\cal O_\chi),
$$
where $\cal O_\chi$ is the  
orbit of  $\chi\in S$ under $\Ga.$

It follows from Formula $(*)$ above that the restriction  $V_\chi$ 
of $V$ to  $ \ell^2(\cal O_\chi)$ is equivalent to the induced
representation $\ind_{\Ga_\chi\ltimes T} ^{\Ga\ltimes T}  \widetilde{\chi},$
where $\Ga_\chi$ is the stabilizer of $\chi$ in 
$\Ga$ and where  $ \widetilde{\chi}$ is the extension of
$\chi$ to $\Ga_\chi\ltimes T$ given  by 
$$
 \widetilde{\chi} (\ga, a) =\chi(a) \tout \ga\in \Ga_\chi, \ a\in T.
$$
The proposition  will be proved  if we can show that, for all $\chi\in S,$ we have
$$
\Vert V_\chi(\mu)\Vert \leq  \Vert V_\chi(\paut(\mu))\Vert. \leqno(**)
 $$
 Now,  the restriction of $V_\chi$ to $\Ga$ is equivalent
 to the  natural representation 
 of $\Ga$ in  $\ell^2(\cal O_\chi),$
 which is the induced representation   $\ind_{\Ga_\chi\ltimes T} ^{\Ga\ltimes T}  1_{\Ga}.$ 
 Observe that $\ind_{\Ga_\chi\ltimes T} ^{\Ga\ltimes T}  1_{\Ga}$ is equivalent
 to  $\left(\ind_{\Ga_\chi } ^{\Ga }  1_{\Ga}\right) \circ \paut.$
Hence, Inequality $(**)$ follows from Herz's majoration principle 
(Proposition~\ref{Pro-Herz}) and the proof of Theorem~\ref{Theo3} is complete.
$\bsq$
\end{proof}

The following corollary   gives a more precise
information  about the spectral structure of the Koopman representation 
associated  to the action on $T$  of a countable subgroup  of $\Aff(T).$

\begin{corollary}
\label{Cor-SpectralDec}
Let $H$ be a a countable subgroup  of $\Aff(T)$  and $\Ga=\paut(H).$
There exists  a $\Ga$-invariant  torus factor  $\overline T$ of $T$
such that the projection of $H$  in $\Aff (\overline T)$  is  an
amenable group  and which is the largest one with this property:
 every other  $\Ga$-invariant torus factor $S$
 of $T$  for which  the projection of $H$  in $\Aff (S)$ is amenable
 is a factor of $\overline T.$
 Moreover, the torus factor  $\overline T$  has the following properties:
\begin{itemize}
\item[(i)]  the projection of $\Ga$ on $\Aut (\overline T)$ is a virtually polycyclic group;
\item[(ii)]  the restriction to $L^2(\overline T)^\perp$   of the Koopman representation 
of $H$ does not weakly contain the trivial representation $1_H.$
\end{itemize}

\end{corollary}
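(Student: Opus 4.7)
The plan is to construct $\overline T$ explicitly as the factor torus dual to a suitable maximal rational invariant subspace, and then verify the three asserted properties in sequence. First, pass to the dual action $\widehat\Ga = \{(g^{-1})^t : g\in\Ga\}$ on $V$: $\Ga$-invariant factor tori of $T$ correspond bijectively to rational $\widehat\Ga$-invariant subspaces $U \subset V$ via $\widehat{\overline{T}_1} = U\cap\Delta$, and under this correspondence the projection of $\Ga$ on $\Aut(\overline{T}_1)$ is identified (by inverse transpose) with $\widehat\Ga|_U$. Since $\Aff(\overline{T}_1)$ is an extension of $\Aut(\overline{T}_1)$ by the amenable torus $\overline{T}_1$, the projection of $H$ in $\Aff(\overline{T}_1)$ is amenable iff $\widehat\Ga|_U$ is amenable as a discrete group. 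Now apply Proposition~\ref{Pro-AmenableQuotient} to $\widehat\Ga\subset GL(V)$ to obtain $V(\widehat\Ga)$, and set
$$V_0 := \RRR\text{-span}\bigl(V(\widehat\Ga)\cap\Delta\bigr).$$
Then $V_0$ is a rational $\widehat\Ga$-invariant subspace contained in $V(\widehat\Ga)$, and by Lemma~\ref{Lem1-AmenableQuotient} the closure $\overline{\widehat\Ga|_{V_0}}$ is amenable, while tautologically $V_0\cap\Delta = V(\widehat\Ga)\cap\Delta$. Define $\overline T$ to be the factor torus of $T$ with $\widehat{\overline T} = V_0\cap\Delta$.

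For the maximality, suppose $\overline{T}_1$ is any $\Ga$-invariant factor torus whose $H$-projection in $\Aff(\overline{T}_1)$ is amenable, and let $U'$ be the corresponding rational $\widehat\Ga$-invariant subspace of $V$. Then $\widehat\Ga|_{U'}$ is amenable and discrete in $GL(U')$; by the Tits alternative it is virtually solvable, and since virtual solvability is preserved under closure in any topological group, $\overline{\widehat\Ga|_{U'}}$ is virtually solvable, hence amenable. The defining property of $V(\widehat\Ga)$ gives $U' \subset V(\widehat\Ga)$, and because $U'$ is rational and generated over $\RRR$ by $U'\cap\Delta \subset V(\widehat\Ga)\cap\Delta$, we conclude $U' \subset V_0$, so $\overline{T}_1$ is a factor of $\overline T$. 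Property~(i) then follows: the projection $\Lambda$ of $\Ga$ in $\Aut(\overline T)\cong GL_k(\ZZ)$ (with $k=\dim\overline T$) is amenable, hence virtually solvable by the Tits alternative, and since every solvable subgroup of $GL_k(\ZZ)$ is polycyclic by Mal'cev's theorem, $\Lambda$ is virtually polycyclic.

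For (ii), I first reduce to the case $H=\Ga$: decomposing $L^2_0(T)$ into orbital summands $\bigoplus_{\chi\in S}\ell^2(\mathcal O_\chi)$ and applying Herz's majoration principle summand by summand exactly as in the proof of Proposition~\ref{Pro-AffAut}, one obtains the restricted inequality
$$\norm{U_0|_{L^2(\overline T)^\perp}(\mu)} \le \norm{U_0|_{L^2(\overline T)^\perp}(\paut(\mu))}$$
for every probability measure $\mu$ on $H$, which reduces the absence of almost invariant vectors to the same statement for $\Ga$. The Fourier transform identifies $L^2(\overline T)^\perp$, as a $\Ga$-module, with the natural permutation representation of $\widehat\Ga$ on $\ell^2(\Delta\setminus V_0)$. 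If this representation weakly contained $1_\Ga$, the standard passage from almost invariant vectors to invariant means (exactly as in the proof of $(i)\Rightarrow(ii)$ of Theorem~\ref{Theo4}) would produce a $\widehat\Ga$-invariant mean $m$ on $\Delta\setminus V_0$; extending $m$ by zero on $(V_0\cap\Delta)\setminus\{0\}$ gives a $\widehat\Ga$-invariant mean on $\Delta\setminus\{0\}$ supported off $V_0$, contradicting Proposition~\ref{Pro-InvMean}(ii), which forces any such mean to be carried by $V(\widehat\Ga)\cap\Delta = V_0\cap\Delta$.

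The main obstacle is the potential non-rationality of $V(\widehat\Ga)$ itself, since factor tori only see rational subspaces and a priori $V_0$ could be a proper subspace of $V(\widehat\Ga)$. The fix is the elementary observation that the $\widehat\Ga$-orbit of any lattice point in $V(\widehat\Ga)$ stays inside $V(\widehat\Ga)\cap\Delta$ and so spans a rational $\widehat\Ga$-invariant subspace of $V_0$; this yields the crucial equality $V_0\cap\Delta = V(\widehat\Ga)\cap\Delta$ which both closes the mean argument in (ii) and allows us to locate $\overline T$ using only the rational span of the lattice points in $V(\widehat\Ga)$.
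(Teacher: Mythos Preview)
Your proof is correct and follows the same overall strategy as the paper: dualize to work with $\widehat\Ga$-invariant rational subspaces of $V$, single out the maximal one with amenable action, and for (ii) derive a contradiction from an invariant mean via Proposition~\ref{Pro-InvMean}. The subspace you call $V_0$ coincides with the paper's $V_{\rm rat}(\Ga)$ (the span of all rational $\widehat\Ga$-invariant subspaces with amenable restriction), and your mean argument---extending by zero on $V_0\cap\Delta$ and invoking Proposition~\ref{Pro-InvMean}(ii) directly---is a cosmetic variant of the paper's, which instead pushes the mean to the quotient $V/V_{\rm rat}(\Ga)$ and reapplies Proposition~\ref{Pro-InvMean} there.

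The genuine difference lies in the tools used for maximality and for property~(i). You import the Tits alternative (amenable linear group $\Rightarrow$ virtually solvable, hence closure amenable) to pass from discrete amenability of $\widehat\Ga|_{U'}$ to closure amenability, and then Mal'cev's theorem (solvable subgroups of $GL_k(\ZZ)$ are polycyclic) for~(i). The paper instead stays internal: it uses Proposition~\ref{Pro-AmenableQuotient-Bis} to see that eigenvalues of $[\Ga^0,\Ga^0]$ on $V_{\rm rat}(\Ga)$ have modulus~1, and then Corollary~\ref{Pro-AmenableQuotient-Discret} to conclude virtual nilpotence of $[\Ga^0,\Ga^0]|_{V_{\rm rat}(\Ga)}$, whence virtual polycyclicity. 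Your route is shorter and cleanly separates the ingredients, at the cost of invoking two substantial outside theorems; the paper's route is self-contained but relies on the finer structural analysis developed in Section~\ref{S13}. You are also more explicit than the paper about the reduction from $H$ to $\Ga$ in (ii) via Herz's principle, which the paper leaves implicit.
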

\begin{proof}
As for the proof of Theorem~\ref{Theo3}, we
proceed by duality, using Fourier analysis and identifying
$V$ and $\Delta$ with their  dual groups.

Let $V_{\rm rat} (\Ga)$ be the subspace generated
by the union of  $\Ga$-invariant rational subspaces $W$
of $V$ for which $\Ga_W$ is amenable.
 Then $V_{\rm rat} (\Ga)$ is  a $\Ga$-invariant rational subspace
and, by Proposition~\ref{Pro-AmenableQuotient}, $\Ga_{V_{\rm rat} (\Ga)}$
is amenable. 

We claim that  the natural unitary representation of $\Ga$ on 
$\ell^2(\Delta\setminus \left(V_{\rm rat} (\Ga) \cap \Delta)\right)$ 
does not weakly contain $1_\Ga.$ Indeed,  assume by contradiction
that this is not the case. Then
there exists a $\Ga$-invariant mean $m$ on  $\Delta\setminus \left(V_{\rm rat} (\Ga) \cap \Delta)\right).$
We consider the vector space  ${\overline V}=V/V_{\rm rat} (\Ga)$
with the lattice ${\overline \Delta} =p(\Delta),$ where $p:V\to \overline V$
is the canonical projection. Then $p_*(m)$ is  a  $\Ga$-invariant mean
on ${\overline \Delta} \setminus\{0\}.$ Hence, by 
Proposition~\ref{Pro-InvMean}, there exists a   non-trivial 
$\Ga$-invariant rational  $\overline W$
subspace of $\overline V$  such that the image of
$\Ga$ in $GL(\overline W)$ is amenable. 
Then $W=p^{-1}(\overline W)$ is a $\Ga$-invariant rational subspace of $V$
 for which $\Ga_W$ is amenable. This is a contradiction since 
$V_{\rm rat} (\Ga)$ is a proper subspace of  $W.$

Let $\Ga^0= \Ga\cap \ZC(\Ga)^0.$ 
By Proposition~\ref{Pro-AmenableQuotient-Bis},  the 
eigenvalues of the restriction
of every element in  $[\Ga^0,\Ga^0]$ to $V_{\rm rat} (\Ga)$ are all of modulus 1.
Hence, by Corollary~\ref{Pro-AmenableQuotient-Discret},
the image of  $[\Ga^0,\Ga^0]$ in $GL(V_{\rm rat} (\Ga))$ is 
virtually nilpotent. It follows that  $\Ga_{V_{\rm rat} (\Ga)}$ is virtually polycyclic.$\bsq$
\end{proof}

\section{Some basic facts on Kirillov's theory and on decay  of matrix coefficients of unitary representations}
\label{S2}

We first recall some basic  facts from Kirillov's theory 
of  unitary representations of nilpotent Lie groups.

For a locally compact second countable group $G,$ the unitary dual $\widehat G$
of $G$ is the set of  classes (for unitary equivalence) of irreducible
unitary representations  of $G.$

Let $N$ be a connected and simply connected nilpotent Lie group
with Lie algebra $\mathfrak n.$
Kirillov's theory provides a parametrization of $\hN$ in terms 
of the co-adjoint orbits in the
dual space  ${\mathfrak n}^*={\rm Hom}_{\mathbf R} ({\mathfrak n},\mathbf R)$
of ${\mathfrak n}.$ We will review the basic features of this theory.

Fix $l\in {\mathfrak n}^*$. There exists a polarization $\mathfrak m$ for $l,$
that is, a Lie subalgebra $\mathfrak m$ such that $l([{\mathfrak m},{\mathfrak m}])=0$
and which is of maximal dimension; the codimension of $\mathfrak m$
is $\frac{1}{2}\dim ({\rm Ad}^* (N) l)$, where ${\rm Ad}^* (N) l$  is the orbit of $l$ under the co-adjoint representation ${\rm Ad}^*$ of $N.$ 
The induced representation  
${\rm Ind}_M^N \chi_l$ is irreducible, where $M=\exp(\mathfrak m)$ and 
$\chi_l$ is the unitary character of $M$ defined by
$$\chi_l(\exp X)=e^{2\pi i l(X)},\qquad X \in{\mathfrak m}.$$
The unitary equivalence class of ${\rm Ind}_M^N \chi_l$  only depends  on the co-adjoint orbit
${\rm Ad}^* (N) l$ of $l$.
We obtain in this way a mapping 
$$
{\mathfrak n}^*/{\rm Ad}^* (N)\to \hN,\qquad {\cal O}\mapsto \pi_{\cal O}
$$
called the Kirillov mapping, 
from the orbit space ${\mathfrak n}^*/{\rm Ad}^*(N)$ of the co-adjoint representation
to  the   unitary dual $\hN$ of $N$
The Kirillov  mapping is in fact  a bijection.
For all of this, see \cite{Kirillov} or \cite{CoGr}.

We  have to recall a few general facts
about  decay of matrix coefficients of  unitary group representations,
following \cite{HoMo} and \cite{Howe3}.

Let $(\pi,\H)$ be a unitary representation of the 
locally compact group $G.$ 
The projective kernel of $\pi$  is the normal subgroup $P_\pi$
of $G$ defined by  
$$
P_\pi=\{ g\in G\ :\ {\pi}(g) =\lambda_\pi (g) I\ \text{for some}    \ \lambda_\pi(g)\in {\mathbf C} \}.
$$
Observe  that the mapping $g\mapsto \lambda_\pi(g)$ defines a unitary 
character $\lambda_\pi$ of  $P_\pi.$
  Observe also that, for $\xi, \eta\in\H,$ the absolute value
of the matrix coefficient
$$C^{\pi}_{\xi,\eta}: g\mapsto \langle \pi(g)\xi,\eta\rangle$$
is constant on cosets modulo $P_\pi.$  
For a real number  $p$ with $1\leq p <+\infty,$ the representation $\pi$ is said to be 
strongly $L^p$  modulo $P_\pi$,
if there is dense subspace $D\subset \H.$ 
such that, for every $\xi,\eta\in D,$  the function $|C^{\pi}_{\xi,\eta}|$
belongs to $L^p(G/P_\pi).$ Observe that then $\pi$ is 
strongly $L^q$  modulo $P_\pi$ for any $q>p,$
since $C^{\pi}_{\xi,\eta}$ is bounded.

Moreover, if $\pi$ is strongly $L^2$ modulo $P_\pi,$  then $\pi$ is contained in
 an infinite multiple
of $\ind_{P_\pi}^G \lambda_\pi$ (this can be shown by a straightforward
adaptation of Proposition~1.2.3 in Chapter V of \cite{HoTa}).

We will also use the notion of a projective  representation.
   Recall that a  mapping $\pi: G \to U(\H)$
from $G$ to the unitary group   of the Hilbert space $\H$
is  a projective representation of $G$
if the following holds:
\begin{itemize}
 \item $\pi(e)=I$,
\item for all $g_1,g_2\in G,$ there exists  $c(g_1 , g_2 )\in\mathbf C $
such that  $$\pi(g_1 g_2 ) = c(g_1 , g_2 )\pi(g_1 )\pi(g_2 ),$$
\item the function $g\mapsto \langle\pi(g)\xi,\eta\rangle$ is measurable for all
$\xi,\eta\in\H.$
\end{itemize}
The mapping  $c:G \times G \to {\mathbf S}^1$ is
a $2$-cocycle
with values in the unit cercle ${\mathbf S}^1.$
The projective kernel of $\pi$ is defined
in the same way as for an ordinary representation.
Every projective unitary representation of $G$
 can be  lifted to 
an ordinary unitary representation of a central extension
of $G $ (for all this, see \cite{Mackey-Livre} or \cite{Mackey}).

\section{Decay of extensions of irreducible representations of nilpotent Lie groups }
\label{S4}
Let $N$ be a connected and simply connected nilpotent Lie group
with Lie algebra $\mathfrak n.$

The group $\Aut (N)$ of continuous automorphisms of $N$ 
can be identified with
the group $\Aut (\mathfrak n)$ of automorphisms of the Lie algebra
$\mathfrak n$  of $N,$
by means of the mapping $\varphi\mapsto d_e\varphi,$
where $d_e\varphi: \mathfrak n\to \mathfrak n$ is the differential
of $\varphi\in\Aut (N)$ at the group unit.
In this way,  $\Aut (N)$ becomes  
 an algebraic subgroup of $GL({\mathfrak n})$.
Therefore, the group $\Aff(N)=\Aut(N) \ltimes N$ of affine transformations
of $N$ is  also an algebraic group  over $\mathbf R$.

Set $G:=\Aff(N).$
In the following, we view $N$ as a normal subgroup of $G.$
The group $ G$ acts by inner automorphisms on $N$ and hence by automorphisms
on ${\mathfrak n},{\mathfrak n}^*,$
and $\hN;$ observe that, for  $g\in G$ and $l\in {\mathfrak n}^*$, we have
$$
(\Ad^*(n) l)^g= \Ad^*(gng^{-1}) (l^g)\qquad \tout n\in N.
$$
This shows that $g$ permutes the  orbits of the co-adjoint representation,
mapping the   orbit of $l$  onto the  orbit of $l^g.$
Let $\pi\in\hN$ with corresponding co-adjoint orbit ${\cal O}.$
The  representation $\pi^g\in \hN,$ defined
by $\pi^g(n)= \pi(gng^{-1}),$ corresponds to the
orbit ${\cal O}^g.$

For a co-adjoint orbit ${\cal O}$ in ${\mathfrak n}^*,$ we
denote by  $G_{\cal O}$ the stabilizer of ${\cal O}$ in   $G$.
Similarly,  
$${ G}_\pi=\{g\in G\ :\ \pi^g\ \text{is equivalent to}� \ \pi\}
$$ 
is the stabilizer in $G$ of $\pi\in\hN.$ 
Observe  that, if $\pi$ is the representation corresponding
to the co-adjoint orbit ${\cal O}$ in Kirillov's picture, then
${ G}_\pi=G_{\cal O}.$ Observe also that $N$ is contained in ${ G}_\pi$.

The following elementary  fact will be crucial for the sequel.
\begin{proposition}
\label{Prop-StabAlg}
Let $\pi$ be an irreducible unitary representation of $N$. 
The stabilizer $G_\pi$  of $\pi$ is an  algebraic subgroup
of  $G$. Moreover,  for every $l$ in the co-adjoint orbit 
 corresponding to $\pi,$  we have $G_\pi= G_lN$ where 
$G_l$ is the stabilizer of $l$ in $G$ 
\end{proposition}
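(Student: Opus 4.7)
The plan is to use Kirillov's bijection to translate both assertions into statements about the coadjoint orbit $\mathcal{O}\subset\mathfrak{n}^*$ corresponding to $\pi$. By the discussion preceding the proposition, we have $G_\pi = G_{\mathcal O}$; moreover, the $G$-equivariance formula $(\Ad^*(n)l)^g = \Ad^*(gng^{-1})(l^g)$, combined with the normality of $N$ in $G$, yields $\mathcal O^g = \Ad^*(gNg^{-1})\,l^g = \Ad^*(N)\,l^g$ for every $g\in G$.

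First I would establish the equality $G_\pi = G_l N$. Fix $l\in\mathcal O$. From $\mathcal O^g = \Ad^*(N)\,l^g$, the condition $g\in G_\pi$ reads $l^g\in\mathcal O = \Ad^*(N)\,l$; since the coadjoint action of $N$ coincides with the restriction to $N\subset G$ of the $G$-action on $\mathfrak{n}^*$, this is equivalent to the existence of $n\in N$ with $l^g = l^n$, i.e., $n^{-1}g\in G_l$, and therefore $g\in N G_l = G_l N$ (the last identity because $N$ is normal). The reverse inclusion is immediate: $G_l$ fixes $l$ and so preserves $\mathcal O$, while $N\subset G_\pi$ is already recorded in the excerpt.

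Second, I would show $G_\pi$ is Zariski closed in $G$. The key input is Rosenlicht's theorem: every orbit of a unipotent algebraic group acting on an affine variety over a field of characteristic zero is Zariski closed. Applied to the action of the unipotent algebraic group $N$ on the affine space $\mathfrak{n}^*$, this gives that the orbit $\mathcal O = \Ad^*(N)\,l$ is closed. Since $g\mapsto l^g$ is a morphism of algebraic varieties $G\to\mathfrak{n}^*$, the set $G_\pi = \{g\in G : l^g\in\mathcal O\}$ is the preimage of the closed subset $\mathcal O$, hence Zariski closed. Being also a subgroup, it is an algebraic subgroup of $G$.

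The essentially only non-elementary ingredient is Rosenlicht's closed-orbit theorem; everything else is bookkeeping with the Kirillov correspondence and with the normality of $N$ in $G=\Aff(N)$. I therefore do not anticipate any genuine obstacle beyond keeping the sign conventions for the $G$- and $N$-actions on $\mathfrak n^*$ consistent so that $l^n = \Ad^*(n)\,l$ when $n\in N$ is viewed as an element of $G$.
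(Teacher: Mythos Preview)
Your proof is correct and follows essentially the same route as the paper: both arguments rest on the fact that the coadjoint orbit $\mathcal O$ is a Zariski-closed subvariety of $\mathfrak n^*$ (the paper cites \cite[Theorem~3.1.4]{CoGr}, you invoke Rosenlicht's closed-orbit theorem, which is the underlying reason), and both deduce $G_\pi=G_lN$ from the transitivity of $N$ on $\mathcal O$. You spell out the preimage argument $G_\pi=\{g:l^g\in\mathcal O\}$ more explicitly than the paper does, but the substance is the same.
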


\begin{proof}
  The co-adjoint orbit $\cal O$ associated to 
$\pi$  is an  algebraic
subvariety of  ${\mathfrak n}^*$ (see Theorem~3.1.4 in \cite{CoGr}).
It follows that $G_\pi= G_{\cal O}$ is an algebraic subgroup
of  $G.$
Moreover, since $N$ acts  transitively  on $\cal O$,
it is clear that ${G}_{\cal O}=G_l N$ for every $l\in \cal O.$
$\bsq$
\end{proof}

Let $\pi$ be an irreducible unitary representation of
$N$, with Hilbert space $\H.$ It is a well-known 
 part of Mackey's theory
of unitary representations of group extensions 
that there exists a projective unitary representation $\widetilde\pi$
of ${G}_{\pi}$ on $\H$ which extends $\pi$. 
Indeed, for every
$g\in {G}_\pi$,  there exists a unitary
operator $\widetilde\pi(g)$ on $\H$ such that
$$
\pi (g(n))= \widetilde\pi(g) \pi(n) \widetilde\pi(g)^{-1} \tout n\in N.
$$
One can choose  $\widetilde\pi(g)$  
such that $g\mapsto \widetilde\pi(g)$ is a projective representation
unitary representation of ${ G}_\pi$ which extends $\pi$ (see Theorem~8.2 in \cite{Mackey}).

The following proposition, which  will play a central  r\^ole in our proofs,
is a consequence of  arguments from \cite{HoMo} 
concerning decay properties of unitary representations of algebraic groups.

\begin{proposition} 
\label{Prop1} 
Let $\pi$ be an irreducible unitary representation of
$N$ on $\H$ and let $\widetilde{\pi}$
be a projective unitary representation  of ${G}_{\pi}$  which extends ${\pi}.$
There exists a real number  $p\geq 1,$ only depending  on the
dimension of ${G},$ such that $\widetilde{\pi}$  is strongly $L^p$  modulo  its projective
kernel.
\end{proposition}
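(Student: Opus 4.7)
The plan is to reduce the statement to a Howe--Moore type decay estimate for ordinary unitary representations of algebraic groups, after lifting the projective representation $\widetilde{\pi}$ to an honest unitary representation of a central extension. Throughout, we exploit that $G_\pi$ is an algebraic subgroup of $G$ (Proposition~\ref{Prop-StabAlg}), so $\dim G_\pi \leq \dim G$, and that $\widetilde{\pi}$ is irreducible as a projective representation of $G_\pi$ since its restriction to the normal subgroup $N$ is the irreducible representation $\pi$.

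First, I would lift $\widetilde{\pi}$ to an ordinary unitary representation. By general theory (see \cite{Mackey-Livre} or \cite{Mackey}), there is a central extension
$$
1 \to \TT \to \widetilde{G}_\pi \to G_\pi \to 1
$$
and an ordinary unitary representation $\pi'$ of $\widetilde{G}_\pi$ on $\H$ whose descent to $G_\pi$ is $\widetilde{\pi}$. Since the $2$-cocycle defining $\widetilde{\pi}$ is continuous and $G_\pi$ is a real algebraic group, $\widetilde{G}_\pi$ can be realized as a real algebraic group of dimension at most $\dim G + 1$. The pullback of the projective kernel $P_{\widetilde{\pi}}$ to $\widetilde{G}_\pi$ coincides with the ordinary kernel $P_{\pi'}$ (containing the central $\TT$), and the canonical identification $\widetilde{G}_\pi / P_{\pi'} \cong G_\pi / P_{\widetilde{\pi}}$ sends absolute values of matrix coefficients to absolute values of matrix coefficients. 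So the claim reduces to showing that $\pi'$, an irreducible unitary representation of the algebraic group $\widetilde{G}_\pi$, is strongly $L^p$ modulo $P_{\pi'}$ for some $p$ depending only on $\dim \widetilde{G}_\pi$.

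For this reduction, I would invoke the quantitative Howe--Moore decay theorem for irreducible unitary representations of algebraic groups (as developed in \cite{HoMo} and \cite{Howe3}). The strategy there is to use the Levi decomposition $\widetilde{G}_\pi = L \ltimes R$, where $R$ is the (solvable) radical and $L$ is reductive. Since $\pi'$ is irreducible, Mackey analysis shows that $R$ acts modulo the projective kernel through a character, so the decay question is carried by the reductive Levi factor. For reductive real algebraic groups, one obtains strong $L^p$ decay modulo the projective kernel via Harish-Chandra/Howe--Moore matrix coefficient estimates on $K$-finite vectors, with $p$ uniformly bounded in terms of the dimension (and rank, which is itself bounded by the dimension). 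Pulling these estimates back along the lift, one gets strong $L^p$ decay of $\widetilde{\pi}$ modulo $P_{\widetilde{\pi}}$ with a uniform constant.

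The main obstacle will be the uniformity of $p$: one must ensure that in the Levi reduction and in the decay estimate for the reductive part, the exponent $p$ can be chosen depending only on $\dim G$, not on the particular $\pi$ or the specific algebraic structure of $G_\pi$. This is where one must carefully invoke the explicit Harish-Chandra $\Xi$-function estimates (equivalently, the Kunze--Stein bounds) for semisimple groups, whose integrability exponent is controlled by the restricted root system, and in turn by $\dim G$. A secondary technical point is guaranteeing that the central extension $\widetilde{G}_\pi$ is indeed algebraic and not merely a real Lie group: this is standard when the cocycle arises from Mackey's construction over a Borel-measurable section and the section can be taken rational on the algebraic quotient $G_\pi/N$.
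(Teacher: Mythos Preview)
Your outline has two genuine gaps that the paper's proof is specifically designed to circumvent.

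First, the central extension $\widetilde{G}_\pi$ is in general \emph{not} an algebraic group, contrary to what you assert. The cocycle governing $\widetilde{\pi}$ is essentially the metaplectic cocycle on a symplectic quotient, and the metaplectic double cover of $Sp_{2n}(\RRR)$ is the prototypical connected Lie group with no faithful linear realization. The paper confronts this head-on: it builds an explicit two-fold cover $\widetilde{G}_\pi=\widetilde{R}N$ via the oscillator representation and then, rather than invoking algebraicity, checks by hand that the Mackey-theoretic hypotheses needed from \cite{HoMo} (e.g.\ that the relevant diagonal and product subgroups are regularly related) survive the passage from the algebraic group $G_\pi$ to its non-algebraic cover, using the covering map $p:\widetilde{G}_\pi\to G_\pi$ to transport the orbit structure. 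What you flag at the end as a ``secondary technical point'' is in fact the main obstruction, and it is not standard.

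Second, your Mackey reduction is incorrect: you claim that the solvable radical $R$ of $\widetilde{G}_\pi$ ``acts modulo the projective kernel through a character, so the decay question is carried by the reductive Levi factor.'' But $N$ sits inside the unipotent (hence solvable) radical and acts via the infinite-dimensional irreducible representation $\pi$, which is certainly not scalar-valued. The decay of $\widetilde{\pi}$ modulo its projective kernel is not a purely reductive phenomenon; it comes precisely from the entanglement of the Levi factor with the unipotent radical through the oscillator representation. The paper's route reflects this: it first extends $\pi$ to an ordinary representation of the full unipotent radical of $G_\pi$ via Kirillov theory (a nontrivial polarization argument), then uses Howe's structure theorem to exhibit a Heisenberg quotient $N_1/K_1$ on which the Levi factor acts symplectically, and finally appeals to the specific $L^p$ estimates for tensor powers of the metaplectic representation from \cite{HoMo}, pages 87--93, which yield the uniform bound $p\le(\dim G_\pi+1)^2$. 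Harish-Chandra estimates for a reductive group in isolation do not suffice here.
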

\begin{proof}
Since $\pi$ is irreducible,   $\widetilde\pi(g)$ is uniquely determined  up to a
scalar multiple of the identity operator $I$  for every $g\in G_{\pi}.$
In particular, all projective unitary representations  of ${G}_{\pi}$ which extend $\pi$
have  the same projective  kernel. 

We will need to  give an explicit construction of a projective representation
of ${G}_{\pi}$ extending $\pi$. This representation will lift
to an ordinary representation of a  two-fold cover  of $G_\pi.$

We denote by  ${\cal O}$  the co-adjoint orbit associated to 
$\pi$ and  we fix throughout the proof
a linear functional  $l$ in ${\cal O}.$  

Set $H= \Aut (N)$ so that $G=H\ltimes N.$
 Let $H_l$ be the stabilizer of $l$ in $H$. As shown
in  Proposition~\ref{Prop-StabAlg}, 
$G_\pi$  is an algebraic subgroup of $G$ and $G_\pi= H_l N.$
It is clear that  $H_l$ is also an  algebraic subgroup of $G$.
Let $U_l$ be the unipotent radical of  $H_l.$ 
Then $U=U_l N$ is the   unipotent radical of $G_\pi.$

\medskip\noindent
$\bullet$\emph{First step:} We claim  that $\pi$ can be extended to an ordinary unitary representation 
$\sigma$ of $U$.

Indeed, let ${\mathfrak u}_l$ be the Lie algebra of $U_l$.
We extend $l$ to a linear functional 
$\widetilde l$ on the Lie algebra 
${\mathfrak u}= {\mathfrak u}_l\oplus {\mathfrak n} $
of $U$
by defining $\widetilde l (X)= 0$ for all $X\in{\mathfrak u}_l .$

 Let ${\mathfrak m }\subset {\mathfrak n }$ be   a polarization for $l.$
We claim that $\widetilde{{\mathfrak m}}:= {\mathfrak u}_l \oplus {\mathfrak m}$
is a polarization  for $\widetilde l.$ Indeed,
we have   $\widetilde l([\widetilde{{\mathfrak m}},\widetilde{{\mathfrak m}}])=0$
since $[X,Y]\in {\mathfrak n}  $ and $(\exp X)l = l$ for all
 $X\in {\mathfrak u}_l$ and $Y\in {\mathfrak m}.$
Moreover, the codimension of $\widetilde{{\mathfrak m}}$
in ${\mathfrak u}$  coincides with 
the codimension of ${{\mathfrak m}}$
in $\frak{n}$ and the dimension of the co-adjoint orbit 
of $\widetilde l$ under ${\rm Ad}^* (U)$ coincides with the dimension of ${\rm Ad}^* (N) l.$
Since the codimension of   $\mathfrak m$ in ${\mathfrak n}^*$
is $\frac{1}{2}\dim ({\rm Ad}^* (N) l),$ it follows that  the codimension of   $\widetilde{\mathfrak m}$ in ${\mathfrak u}^*$
is $\frac{1}{2}\dim ({\rm Ad}^* (U) \widetilde l).$ Hence,
$\widetilde{{\mathfrak m}}$
is a polarization  for $\widetilde l.$

Recall that  $\pi$ is unitarily equivalent to  the induced representation  
${\rm Ind}_M^N \chi_l$, where $M=\exp(\mathfrak m)$ and 
$\chi_l$ is the unitary character of $M$ defined by
$$\chi_l(\exp X)=e^{2\pi i l(X)} \tout  X \in{\mathfrak m}.$$
Let $\widetilde M$ be the closed subgroup  of $U$ corresponding to $\widetilde{{\mathfrak m}}$.
The unitary character $\chi_{\widetilde l}$ of 
$\widetilde M$ given by $\widetilde l$ coincides with $\chi_l$ on $M.$
Since a fundamental domain for $M\bs N$ is also a fundamental domain
for $\widetilde M \bs U,$ we see that ${\rm Ind}_{\widetilde M}^U \chi_{\widetilde l}$
can be realized on the  Hilbert space of ${\rm Ind}_M^N \chi_l$ and that 
$\sigma:= {\rm Ind}_{\widetilde M}^U \chi_{\widetilde l}$ extends $\pi= {\rm Ind}_M^N \chi_l.$


\medskip\noindent
$\bullet$\emph{Second step:} We claim that  $G_\sigma= G_\pi.$

It is obvious that $G_\sigma\subset G_\pi.$
Let  $H_l= R U_l$ be a Levi decomposition of $H_l,$
where $R$ is  a reductive subgroup of $G_l.$
In order to show that $G_\pi\subset G_\sigma,$  it suffices
to prove that $R\subset G_\sigma$, since  $ G_\pi= RU.$ 
Now,   $R$ leaves ${\mathfrak u}_l$ and ${\mathfrak n}$ invariant
and fixes $l.$  Hence, $R$  fixes
the extension  ${\widetilde l}$ of $l$ defined above
and the claim follows.

\medskip\noindent
$\bullet$\emph{Coda:}
As a result, upon replacing $N$ by $U$, we can assume that $N$
is the unipotent radical of $G_\pi.$ 
Since the connected component of $G_\pi$ has finite index, we can 
also assume that $G_\pi$ is connected. 

As shown above, we have a Levi decomposition 
$G_\pi =R N$ with $R$ a reductive subgroup contained
in $G_l.$  According to \cite{Howe2}, we can find in $N$ algebraic subgroups
$K_1\subset P_1\subset N_1$ with the following properties:
\begin{itemize}
 \item $K_1$, $P_1$, and $N_1$ are normalized by $R;$
\item $K_1$ and  $P_1$ are normal in $N_1$ and $N_1/K_1$ is a Heisenberg group
with centre $P_1/K_1;$
\item there exists a unitary character $\lambda$ of $P_1/K_1$
such that $\pi$ is equivalent to the induced representation
${\rm Ind}_{N_1}^N \pi_1$, where $\pi_1$ is the 
lift to $N_1$ of the unique irreducible representation
of the Heisenberg group $N_1/K_1$  with central character $\lambda.$
\end{itemize}
The action of $R$ on $N_1/K_1$ defines a homomorphism 
from $ R$ to the symplectic group
$Sp(N_1/P_1)$ of the vector space $N_1/P_1;$ as a result, we have a homomorphism
$\vfi: RN_1\to Sp(N_1/P_1)\ltimes (N_1/K_1).$
The representation $\pi_1$ of $N_1/K_1$ extends to a 
projective representation $\omega$ of $Sp(N_1/P_1)\ltimes (N_1/K_1),$
called the metaplectic (or oscillator, or Shale-Weil) representation; 
more precisely, there exists a two-fold cover ${\widetilde{Sp}}$ of $Sp(N_1/P_1)$
and a  unitary representation $\omega$ of $ {\widetilde{Sp}}\ltimes (N_1/K_1)$
on the Hilbert space of $\pi_1$ which extends $\pi_1.$

We can lift $\vfi$ to a homomorphism 
 $\widetilde \vfi: {\widetilde R}N_1 \to {\widetilde{Sp}}\ltimes (N_1/K_1)$ for a two-fold cover  $\widetilde R$ of $R.$
Then  $\rho: = \omega \circ {\widetilde\vfi}$ is a unitary representation 
of ${\widetilde R}N_1$ on the Hilbert space of $\pi_1$
which extends   $\pi_1.$

Set  $\widetilde\pi: = {\rm Ind}_{{\widetilde R}N_1}^{{\widetilde R}N} \rho.$
Then  $\widetilde\pi$ is a unitary representation of the two-fold cover
${\widetilde G}_\pi:={\widetilde R}N$ of $G_\pi=RN;$ moreover, $\widetilde\pi$ 
extends $\pi,$ since  $\pi$ is equivalent to  ${\rm Ind}_{N_1}^N \pi_1$,
and $\rho$ extends $\pi_1.$

Observe that ${\widetilde G}_\pi$ is in general not an algebraic 
group. Let $p: {\widetilde G}_\pi \to G_\pi$ be the covering map.
Let us say that  a connected subgroup
$H$ of   ${\widetilde G}_\pi$ is reductive
if $p(H)$ is a reductive subgroup of $G_\pi.$
We claim that ${\widetilde G}_\pi$
has no non-trivial reductive normal subgroup.
Indeed,   let $H$ be a reductive normal subgroup
of ${\widetilde G}_\pi.$  Since $G_\pi=RN$ is a Levi decomposition
of $G_\pi,$   the normal subgroup  $p(H)$ of $G_\pi$
is conjugate to a subgroup of $R$ and therefore  $p(H)\subset R.$ 
Hence, $p(H)$ centralizes $N.$
It follows that $p(H)$ is trivial since $p(H)\subset \Aut (N).$

Now, the same arguments as those on pages 87--93 in \cite{HoMo} show that there exists
an integer $k$ such that  the $k$-fold tensor
power $\widetilde \pi^{\otimes k}$ of $\pi$  is square integrable modulo the projective kernel $P_{\widetilde\pi}$ of $\widetilde\pi.$
For instance, let us check how  the first step in \cite{HoMo}
towards this claim  carries over to our situation. 
For an integer $k,$ we are interested in the tensor power $\widetilde \pi^{\otimes k}.$
In order to apply Mackey's tensor product theorem 
(see \cite[Theorem 3.6]{Mackey-Livre}),  we  have to show that 
$({\widetilde R}N_1)^k$ and the diagonal subgroup 
$\Delta {\widetilde G}_\pi$ of  ${\widetilde G}_\pi^k$ are regularly related.
Now, the quotient  space ${\widetilde G}_\pi^k/({\widetilde R}N_1)^k$ is 
can be  canonically identified with ${ G}_\pi^k/({R}N_1)^k$,
and the action of    $\Delta {\widetilde G}_\pi$ on  ${\widetilde G}_\pi^k/({\widetilde R}N_1)^k$
corresponds, via the  covering mapping $p: {\widetilde G}_\pi \to G_\pi$,
to the action of   $\Delta { G}_\pi$ on  ${ G}_\pi^k/({R}N_1)^k.$
Since  $\Delta { G}_\pi$ of  ${G}_\pi^k$ are algebraic subgroups
of $G_\pi^k,$ the claim follows. $\bsq$

\end{proof}
 \begin{remark}
  \label{Rem1}
According to \cite[p.93]{HoMo}, a crude bound for the number $p$  in Proposition~\ref{Prop1}  is 
$$
p\leq (\dim (G_{\pi}) +1)^2.
$$  
 The  generalized metaplectic representation $\widetilde{\pi}$
which appears in the proof  above has been studied by several authors
(see \cite{Duflo}, \cite{Howe2}, \cite{LionVergne}).
\end{remark}

\section{Rational unitary representations of a nilpotent Lie group}
\label{S5}

As in the previous section,
let $N$ be a connected and simply connected nilpotent Lie group
and $$G:=\Aff (N)=\Aut (N)\ltimes N.$$

Let $\pi$ be an irreducible unitary representation of $N$ and $G_\pi$ the stabilizer of $\pi$ in $G.$ Let   ${\widetilde\pi}$ be 
a  projective unitary representation  of $G_\pi$  extending $\pi.$ 
In the following proposition, we  describe  the projective kernel  $P_{\widetilde\pi}$ of $\widetilde \pi.$

 \begin{proposition}
\label{Prop-ProKerExt}
Let $L_\pi$ be the connected component  of $\Ker(\pi).$
Set ${\overline N}= N/L_\pi$ 
and let $p: N\to \overline{N}$ be the canonical projection.
For  $g=(h,n)\in G_\pi$ with $h\in \Aut(N)$ and $n\in N,$ the following are conditions are equivalent:
\begin{itemize}
\item[(i)] $g\in P_{\widetilde\pi}$;
\item[(ii)]  $h$  leaves $ L_\pi$ invariant and  the automorphism of $\overline N$ induced by $h$ 
coincides with the inner automorphism  $\Ad(p(n)^{-1})$.
\end{itemize}
\end{proposition}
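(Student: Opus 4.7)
The plan is to first convert the projective-kernel condition (i) into a purely algebraic condition inside $N$ via Schur's lemma, and then to upgrade this condition into the rigid form of (ii) by exploiting the discreteness of $K/L_\pi$ in the connected group $\overline N := N/L_\pi$. Set $K := \Ker(\pi)$. Since $\widetilde\pi$ extends $\pi$, for every $g\in G_\pi$ and every $n'\in N$ we have $\widetilde\pi(g)\pi(n')\widetilde\pi(g)^{-1} = \pi(gn'g^{-1})$. For $g=(h,n)\in\Aut(N)\ltimes N$, a direct computation in the semi-direct product gives $gn'g^{-1}=n\, h(n')\, n^{-1}$. Applying Schur's lemma to the irreducible representation $\pi$, the operator $\widetilde\pi(g)$ is a scalar multiple of the identity if and only if it commutes with $\pi(N)$, which is in turn equivalent to
$$n\, h(n')\, n^{-1}\, (n')^{-1}\in K\tout n'\in N. \qquad (\ast)$$

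Assuming (ii), the equality $\bar h=\Ad(p(n)^{-1})$ on $\overline N$ unfolds as $p(n)\bar h(\bar n')p(n)^{-1}=\bar n'$, so projecting $(\ast)$ via $p$ shows that its left-hand side already lies in $L_\pi\subset K$, whence (i) holds. Conversely, assume (i). Since $g\in G_\pi$ stabilizes the equivalence class of $\pi$ and equivalent unitary representations have equal kernels, $gKg^{-1}=K$. Because $K$ is normal in $N$, inner conjugation by $n$ fixes $K$ setwise, reducing this to $h(K)=K$, and hence to $h(L_\pi)=L_\pi$ (as $L_\pi$ is the connected component of $K$ at the identity); this is the first part of (ii).

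To obtain the second part of (ii), pass to $\overline N$ and consider the continuous map
$$\phi:\overline N\to \overline N,\qquad \phi(\bar n'):=p(n)\,\bar h(\bar n')\,p(n)^{-1}\,(\bar n')^{-1}.$$
By $(\ast)$, $\phi$ takes its values in $\overline K := K/L_\pi$. Since $L_\pi$ is the connected component of $K$ at the identity, $\overline K$ is discrete in $\overline N$; but $\overline N$ is connected and $\phi(\bar 1)=\bar 1$, so $\phi\equiv \bar 1$, which is exactly $\bar h=\Ad(p(n)^{-1})$. The only real obstacle in the argument is identifying the correct quotient: once one works in $\overline N$, the discreteness of $\overline K$ inside the connected group $\overline N$ is precisely what upgrades the weak containment $(\ast)$ to the pointwise rigid identity $\bar h=\Ad(p(n)^{-1})$.
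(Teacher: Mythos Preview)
Your proof is correct and follows essentially the same strategy as the paper: translate membership in $P_{\widetilde\pi}$ into the algebraic condition $(\ast)$ via irreducibility of $\pi$ (Schur), then use connectedness of $N$ together with the discreteness of $K/L_\pi$ to pass from $K$ to $L_\pi$. The only minor differences are organizational: you first obtain $h(K)=K$ from the hypothesis $g\in G_\pi$ and then run the connectedness argument on $\overline N$, whereas the paper runs the connectedness argument directly on $N$ and extracts $h(L_\pi)=L_\pi$ afterwards; and for (ii)$\Rightarrow$(i) you verify $(\ast)$ directly while the paper passes through an auxiliary extension $\widetilde\sigma$ of the factored representation on $\overline N$.
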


\begin{proof}
Assume that  $g=(h,n)\in  P_{\widetilde\pi}$.
By definition of $P_{\widetilde\pi},$ we have 
$\widetilde \pi(h)=\la_\pi(g)\pi(n^{-1}).$
It follows that, for every  $x\in N$
$$
\pi(h(x))= \widetilde \pi(h)\pi(x)\widetilde\pi(h)^{-1} =  \pi(n^{-1})\pi(x)\pi(n)= \pi(n^{-1}xn),
$$
that is, 
$$
h(x)n^{-1}x^{-1}n \in  \Ker(\pi) \tout x\in N.
$$
Since $N$ is connected, this is equivalent to 
$$
h(x)n^{-1}x^{-1}n \in  L_\pi \tout x\in N.
$$
As $L_\pi$ is normal in $N,$ this shows that $L_\pi$ is invariant under $h$ 
and  that the automorphism induced by $h$ on $\overline N$ is $\Ad(p(n)^{-1}).$

Conversely, suppose that $L_\pi$ is invariant under $h$ 
and  that the automomorphism $\overline h$ induced by $h$ on $\overline N$ 
coincides with  
$\Ad(p(n)^{-1}).$
Observe that $\pi$ factorizes to a representation $\sigma$ of ${\overline N}.$
Let   $\widetilde{\sigma}$ be an extension of  $\sigma$ to 
the stabilizer of $\sigma$ in $\Aut(\overline N)\ltimes \overline N.$
Then
$$
 \widetilde{\sigma} ({\overline h})\sigma(p(x))\widetilde{\sigma} ({\overline h})^{-1}  = \sigma(p(n))^{-1}\sigma(p(x))  \sigma(p(n))
 \tout  x \in N,
$$
that is, $\sigma(p(n)) \widetilde{\sigma} ({\overline h})$ commutes with  $\sigma(p(x))$ for all $x\in N.$
Since $\pi$ is irreducible, it follows that  $ \sigma(p(n))\widetilde{\sigma} ({\overline h})$
and hence   $ \pi({n})\widetilde{\pi} ({h})$ is  a scalar operator. This means that  $g=(h,n)\in P_{\widetilde\pi}. \bsq$ 
\end{proof}

Next, we  review some well-known facts about rational structures on $\mathfrak n$
(see \cite{CoGr}, \cite{Raghunathan}). 

Recall first that a lattice $\Ga$ in a locally compact  group $G$
 is a discrete subgroup such that the  translation invariant measure 
induced by a Haar measure on $G$
on  the homogeneous space $\Ga\backslash G$ is finite.

The Lie algebra $\mathfrak n$ (or
the corresponding nilpotent Lie group $N=\exp ({\mathfrak n})$) has
a \emph{rational structure} if there is a Lie algebra ${\mathfrak n} _{\mathbf Q}$
over
 $\mathbf Q$ such that ${\mathfrak n}\cong {\mathfrak n}_{\mathbf Q}\otimes_{\mathbf Q} \mathbf R.$
If  $\mathfrak n$ has a rational structure given by  ${\mathfrak n}_{\mathbf Q}$,
then $N$ contains a  cocompact lattice $\La$ 
such that $\log \La \subset {\mathfrak n}_{\mathbf Q}$. 
Conversely,   if $N$  contains a  lattice $\La$,
then $\La$ is cocompact and $\mathfrak n$ has a rational structure  
given by ${\mathfrak n}_{\mathbf Q}={\mathbf Q}-{\rm span}  (\log \La).$ 

Assume from now on that $N$ has a 
rational structure  ${\mathfrak n}_{\mathbf Q}$ and let 
$\La$ be a lattice inducing this rational structure.
We say that a $\mathbf R$-subspace $\mathfrak h$  of $\mathfrak n$ is   rational if
$\mathfrak h={\mathbf R}-{\rm span} ({\mathfrak h}\cap{\mathfrak n}_{\mathbf Q}).$
All subalgebras in the ascending or ascending series
as well as the centre of $\mathfrak n$ are rational.
A connected closed subgroup $H$ of $N$ is said to be rational
if the corresponding subalgebra Lie algebra $\mathfrak h$   is rational.
This is equivalent to the fact that
$H\cap \La$ is a  lattice in $H.$

Let $H$ be a rational connected normal closed subgroup of $N$
with Lie algebra ${\mathfrak h}$
Then $N/H$ has a canonical rational structure $({\mathfrak n}/{\mathfrak h})_{\mathbf Q}$
induced  by the lattice $\La H/H$ of $N/H.$

 There is a unique  rational structure ${\mathfrak n}^*_{\mathbf Q}$ on 
the dual space ${\mathfrak n}^*$
 defined as follows: a functional $l\in {\mathfrak n}^*$ belongs to ${\mathfrak n}^*_{\mathbf Q}$ if and only
if $l(X)\in \mathbf Q$ for all $X\in  {\mathfrak n}_{\mathbf Q}.$

\medskip

 An important role will be played later (in Section~\ref{S7})
 by  irreducible unitary representations of $N$  which are rational
in the sense of the following definition.
\begin{definition}
 \label{Def2}
An irreducible  unitary representation
$\pi$ of $N$ is \emph{rational} if its co-adjoint orbit ${\cal O}_\pi$
is rational, that is, if ${\cal O}_\pi\cap {\mathfrak n}^*_{\mathbf Q}\neq \emptyset.$
\end{definition}

We fix for the rest of this section 
 a rational  irreducible unitary representation $\pi$  of
$N.$

We first  establish  the rationality of 
the kernel  of $\pi$.
\begin{proposition}
 \label{Prop-RatStab}
The  connected component  $L_\pi$ of $\Ker(\pi) $  is  a rational normal subgroup of $N.$
As a consequence, $\overline \La= \La L_\pi/L_\pi$ is a lattice in $N/L_\pi.$
\end{proposition}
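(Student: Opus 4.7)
The strategy is to first identify the Lie algebra $\mathfrak{l}_\pi$ of $L_\pi$ in Kirillov-theoretic terms and then to exploit the rationality of the co-adjoint orbit $\mathcal{O}_\pi$ associated with $\pi$ in order to conclude that $\mathfrak{l}_\pi$ is rational.

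My first step is to establish the identification
$$
\mathfrak{l}_\pi \;=\; \{X\in \mathfrak{n}\ :\ l(X)=0\ \text{for every}\ l\in \mathcal{O}_\pi\},
$$
that is, $\mathfrak{l}_\pi$ is the annihilator in $\mathfrak{n}$ of $\mathrm{span}_\RRR(\mathcal{O}_\pi)$. The right-hand side is an ideal because $\mathrm{span}_\RRR(\mathcal{O}_\pi)$ is $\Ad^*(N)$-invariant. To see that the exponential of this ideal is contained in $\Ker\, \pi$, I would realize $\pi$ concretely as $\mathrm{Ind}_M^N \chi_l$ for a polarization $\mathfrak{m}$ chosen to contain the ideal, and check directly that the induced representation is trivial on it (using that $\chi_l$ vanishes on the ideal because $l$ does, combined with $\Ad$-invariance of the ideal). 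For the reverse inclusion, any ideal $\mathfrak{j}\subset \Ker\, l$ for one $l\in \mathcal{O}_\pi$ must satisfy $l'(\mathfrak{j})=\{0\}$ for every $l'\in \mathcal{O}_\pi$ by the $\Ad$-invariance of $\mathfrak{j}$, so $\mathfrak{l}_\pi$ lies in the annihilator.

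For the rationality step, I would pick $l\in \mathcal{O}_\pi \cap \mathfrak{n}^*_{\QQ}$. The co-adjoint representation $\Ad^*$ of $N$ is polynomial and defined over $\QQ$ with respect to the rational structure $\mathfrak{n}_\QQ$, so its differential $\mathrm{ad}^*$ sends $\mathfrak{n}_\QQ$ into $\End(\mathfrak{n}^*_\QQ)$. Since $\mathrm{span}_\RRR(\mathcal{O}_\pi)$ coincides with the smallest $\mathrm{ad}^*(\mathfrak{n})$-invariant subspace of $\mathfrak{n}^*$ containing $l$, it is spanned over $\RRR$ by the rational vectors $\mathrm{ad}^*(X_1)\cdots \mathrm{ad}^*(X_k)\, l$ with $X_i\in \mathfrak{n}_\QQ$ and $k\geq 0$. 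Hence $\mathrm{span}_\RRR(\mathcal{O}_\pi)$ is a rational subspace of $\mathfrak{n}^*$, and its annihilator $\mathfrak{l}_\pi$ is a rational ideal of $\mathfrak{n}$. Therefore $L_\pi=\exp(\mathfrak{l}_\pi)$ is a rational connected normal closed subgroup of $N$.

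The final assertion that $\overline\La=\La L_\pi/L_\pi$ is a lattice in $N/L_\pi$ is then immediate from the general fact recalled earlier in this section: when $L$ is a rational connected normal closed subgroup of $N$, the image of $\La$ is a lattice in $N/L$ and induces the canonical rational structure on $N/L$. The main obstacle I anticipate is the first identification $\mathfrak{l}_\pi = \mathrm{span}(\mathcal{O}_\pi)^\perp$, which requires a careful handling of the polarization or an appeal to the Kirillov restriction formula; once it is settled, the rationality step is essentially formal and the lattice consequence follows at once.
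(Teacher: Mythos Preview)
Your proposal is correct and follows essentially the same route as the paper: both identify $\mathfrak{l}_\pi$ with the common kernel $\bigcap_{l'\in\mathcal{O}_\pi}\Ker l'$ and then deduce rationality from the fact that a rational $l\in\mathcal{O}_\pi$ together with the $\QQ$-rationality of the (co-)adjoint action forces this subspace to be rational. The only cosmetic differences are that the paper obtains the identification directly from Lemma~\ref{Lem-KernelInduced} (rather than by hand with a chosen polarization) and concludes rationality by writing $\mathfrak{l}=\bigcap_{X\in\mathfrak{n}_\QQ}\Ker(\Ad^*(\exp X)l)$ as an intersection of rational hyperplanes, whereas you argue dually via the rational span of $\mathcal{O}_\pi$.
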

\begin{proof}
Since $\pi$ is rational,
the corresponding co-adjoint orbit  in 
${\mathfrak n}^*$  
contains a  functional $l\in {\mathfrak n}^*_{\mathbf Q}.$
The representation $\pi$ is unitarily equivalent
to   ${\rm Ind}_M^G \chi_l$, where 
$\mathfrak m$ is a polarization for $l,$ 
$M=\exp(\mathfrak m),$ and 
$\chi_l$ is the unitary character of $M$ corresponding to $l.$

Recall from Lemma~\ref{Lem-KernelInduced} that 
$\Ker (\pi)$ coincides with the largest normal subgroup of $N$ contained in 
$\Ker(\chi_l).$ 
For the ideal  $\mathfrak l$ corresponding to $\Ker(\pi),$ we have therefore
$$
{\mathfrak l} = \bigcap_{n\in N} \Ker (\Ad^*(n) l)=
\bigcap_{X\in {\mathfrak n}_{\mathbf Q} }\Ker (\Ad^*(\exp X) l).
$$
Since $\Ker (\Ad^*(\exp X ) l)$ is rational for all $X\in{\mathfrak n}_{\mathbf Q},$
it follows that ${\mathfrak l}$ is rational.
Thus,
the connected component  $L_\pi$ of $\Ker(\pi)$ is rational, by definition. $\bsq$
\end{proof}

\medskip

The  set $\Aut (\nil)$ consisting 
of the automorphisms $\ga\in \Aut (N)$ with $\ga(\La)=\La$ is 
a discrete subgroup of the algebraic group $\Aut (N).$

Let $G_\pi$ be the stabilizer of  $\pi$ in $G$  and $\widetilde \pi$  a projective 
unitary representation of $G_\pi$ extending $\pi$.
Set 
$$\Ga_\pi=G_\pi\cap \Autnil.$$

The projective kernel $P_{\widetilde\pi}$  of $\widetilde \pi$ was determined in 
Proposition~\ref{Prop-ProKerExt} . We will need
to have  a precise  description of  $P_{\widetilde\pi}\cap (\Ga_\pi \ltimes N). $

 As  before, let $L_\pi$ be the connected component  of $\Ker(\pi),$
 $\overline N= N/L_\pi,$ $p:N\to \overline N$ the canonical projection, 
 and $\overline{\La}= p(\La).$
  Observe that 
$g(L_\pi)=L_\pi$ for all  $g\in  G_\pi\cap \Aut(N).$ 
Consider the  induced   continuous homomorphism 
$$\vfi: G_\pi \to  \Aff(\overline N)=\Aut(\overline{N})\ltimes \overline N.$$ 

\begin{proposition} 
\label{Pro-ProjKernNilman}
Let ${\rm Norm}(\overline \La)$ be the normalizer 
   of  $\overline \La$ in $\overline N.$ 
\begin{itemize}
  \item[(i)]   We have 
$$P_{\widetilde\pi}\cap (\Ga_\pi \ltimes N) = 
\vfi^{-1}\left(\{(\Ad(x), x^{-1}) \ :\ x\in  {\rm Norm}(\overline \La)\}\right).$$
 \item[(ii)]  Let $\Delta:= \{(\Ad(x), x^{-1} z)\ :  x\in \overline \La, z\in Z(\overline N)  \}$,
 where  $Z(\overline N)$ is the centre of $\overline N.$
 Then  $\vfi^{-1} (\Delta) \cap (\Ga_\pi\ltimes N ) $ 
 is a subgroup of finite index in   $P_{\widetilde \pi}\cap (\Ga_\pi\ltimes N ).$ 
 \end{itemize}
 \end{proposition}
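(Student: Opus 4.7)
For (i), I would apply Proposition~\ref{Prop-ProKerExt} directly, while accounting for the additional constraint $h(\La)=\La$ imposed by $h\in\Ga_\pi$. For (ii), the strategy is to rewrite $\Delta$ more conveniently using centrality, reduce the problem to bounding $[\mathrm{Norm}(\overline\La):\overline\La Z(\overline N)]$, and finally prove this last index is finite via a covolume argument in $\overline N/Z(\overline N)$.

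For (i), let $g=(h,n)\in\Ga_\pi\ltimes N$. Proposition~\ref{Prop-ProKerExt} asserts that $g\in P_{\widetilde\pi}$ iff $h$ preserves $L_\pi$ (automatic, since $h\in G_\pi$ preserves $\Ker\pi$ and hence its connected component) and $\overline h=\Ad(p(n)^{-1})$. The assumption $h(\La)=\La$ descends to $\overline h(\overline\La)=\overline\La$, which combined with $\overline h=\Ad(p(n)^{-1})$ is equivalent to $x:=p(n)^{-1}\in\mathrm{Norm}(\overline\La)$. Then $\vfi(g)=(\Ad(x),x^{-1})$, giving the inclusion ``$\subseteq$''; the reverse is immediate. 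For (ii), since $z$ is central the substitution $y=xz^{-1}$ satisfies $\Ad(y)=\Ad(x)$ and $y^{-1}=x^{-1}z$, so
$$\Delta=\{(\Ad(y),y^{-1}):y\in\overline\La\cdot Z(\overline N)\}.$$
Combined with (i), an element $(h,n)\in\Ga_\pi\ltimes N$ lies in $\vfi^{-1}(\Delta)$ iff $(h,n)\in P_{\widetilde\pi}$ and $p(n)^{-1}\in\overline\La Z(\overline N)$. A direct computation, using that conjugation by any element of $\mathrm{Norm}(\overline\La)$ preserves $\overline\La Z(\overline N)$, shows the map $(h,n)\mapsto p(n)^{-1}\bmod \overline\La Z(\overline N)$ descends to a well-defined injection of the coset space into $\mathrm{Norm}(\overline\La)/\overline\La Z(\overline N)$; hence the index is at most $[\mathrm{Norm}(\overline\La):\overline\La Z(\overline N)]$.

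The main obstacle is the finiteness of this last index. I would argue that the Lie algebra of $\mathrm{Norm}_{\overline N}(\overline\La)$ is exactly the center $Z(\overline{\mathfrak n})$: for $X$ in this Lie algebra and $\ga\in\overline\La$, the curve $t\mapsto \exp(tX)\ga\exp(-tX)$ lies in the discrete set $\overline\La$, whence $[X,\log\ga]=0$; since $\log\overline\La$ spans $\overline{\mathfrak n}$ over $\RRR$ by a Mal'cev basis, $X$ is central. Therefore $\mathrm{Norm}_{\overline N}(\overline\La)/Z(\overline N)$ is a closed discrete subgroup of $\overline N/Z(\overline N)$. By Proposition~\ref{Prop-RatStab}, $L_\pi$ is rational, hence $\overline\La\cap Z(\overline N)$ is a lattice in $Z(\overline N)$, and so $\overline\La Z(\overline N)/Z(\overline N)$ is a lattice in $\overline N/Z(\overline N)$. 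A closed discrete overgroup of a lattice in a simply connected nilpotent Lie group has finite index by comparing covolumes on the ambient nilmanifold, which yields the required finiteness.
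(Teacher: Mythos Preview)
Your argument is correct and follows essentially the same route as the paper's proof: part (i) is a direct application of Proposition~\ref{Prop-ProKerExt} together with the constraint $h(\La)=\La$, and part (ii) reduces to the finiteness of $[\mathrm{Norm}(\overline\La):\overline\La Z(\overline N)]$, which both you and the paper establish by showing the identity component of $\mathrm{Norm}(\overline\La)$ equals $Z(\overline N)$ and then invoking a covolume comparison. Your explicit rewriting $\Delta=\{(\Ad(y),y^{-1}):y\in\overline\La Z(\overline N)\}$ and your Lie-algebra phrasing of the ``connected normalizer centralizes'' step are minor presentational variations on the paper's use of Zariski density of $\overline\La$, not a different strategy.
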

\begin{proof}
(i) By Proposition~\ref{Prop-ProKerExt},  we have
$$
P_{\widetilde\pi}= \vfi^{-1} \left(\{(\Ad(x), x^{-1})\ :  x\in \overline N  \}\right). 
$$
Let   $g=(\ga,n)\in P_{\widetilde\pi}  \cap (\Ga_\pi\ltimes N ).$
Then $\vfi(g)=(\Ad(x), x^{-1})$ for some
$x\in \overline N.$
Since $\ga(\La)=\La,$ we have $\Ad(x)(\overline \La)=\overline \La$,
that  is, $x\in  {\rm Norm}(\overline \La).$ 
Conversely, it is obvious that, if $g=(\Ad(x), x^{-1})$ for  some
$x\in  {\rm Norm}(\overline \La)$, then $g\in P_{\widetilde\pi}\cap (\Ga_\pi \ltimes N).$

  (ii) In view of (i), it suffices to prove that the subgroup $\overline  \La Z(\overline N) $ has finite index
  in   $ {\rm Norm}(\overline \La).$ 
  
  To show  this, recall   that $\overline{\La}$ is a cocompact lattice in $\overline N$   
  (Proposition~\ref{Prop-RatStab}).
Let  $ {\rm Norm}(\overline \La)_0$ be the connected  component of 
  $ {\rm Norm}(\overline \La).$ Since  $ {\rm Norm}(\overline \La)_0$ normalizes  $\overline \La$ and since
   $\overline \La$  is discrete,  $ {\rm Norm}(\overline \La)_0$ lies in the centralizer of every element of
   $\overline \La$. As  $\overline \La$ is Zariski dense in $\overline N$
   (see e.g. Theorem 2.1 in  \cite{Raghunathan}), 
   it follows that   $ {\rm Norm}(\overline \La)_0=Z(\overline N).$ 
   Since the projection  of  $\overline \La$ has finite covolume
   in the discrete group  $ {\rm Norm}(\overline \La)/ {\rm Norm}(\overline \La)_0,$ the claim follows.$ \bsq$

\end{proof}

The next proposition will allow us  to deduce
decay properties of representations of $G_\pi$ restricted to  $\Ga_\pi\ltimes N$.
\begin{proposition}
\label{Prop-ClosedSubg}
 The subgroup $(\Ga_\pi\ltimes N) P_{\widetilde\pi} $ is closed in $G_\pi.$
 \end{proposition}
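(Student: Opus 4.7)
The plan is to exploit the semidirect product structure of $G_\pi$ and reduce the problem to a closedness statement in the automorphism factor, then invoke a classical theorem on arithmetic groups. First I would note the semidirect decomposition $G_\pi = \Aut(N)_\pi \ltimes N$, where $\Aut(N)_\pi := G_\pi \cap \Aut(N)$: since an element $(h,n)$ acts on $N$ by $y \mapsto n h(y) n^{-1}$, the representation $\pi^{(h,n)}$ is unitarily equivalent to $\pi^h$, so membership in $G_\pi$ depends only on $h$. Every $h \in \Aut(N)_\pi$ preserves $\Ker(\pi)$ and hence $L_\pi$, yielding an induced morphism $\psi: \Aut(N)_\pi \to \Aut(\overline N)$. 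By Proposition~\ref{Prop-ProKerExt} one has $P_{\widetilde\pi} = \{(h,n) \in G_\pi : \psi(h) = \Ad(p(n)^{-1})\}$, so the projection of $P_{\widetilde\pi}$ to the automorphism factor is $A := \psi^{-1}(\Ad(\overline N))$.

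A direct computation---using the surjectivity of $p : N \to \overline N$ and of $\Ad : \overline N \to \Ad(\overline N)$---then gives the identity
\[
(\Ga_\pi \ltimes N) \cdot P_{\widetilde\pi} = (\Ga_\pi \cdot A) \ltimes N.
\]
Indeed, given $h' = \ga h$ with $\ga \in \Ga_\pi$ and $h \in A$, one picks $m \in N$ with $\Ad(p(m)^{-1}) = \psi(h)$ (possible since $\psi(h) \in \Ad(\overline N)$) and then $n = n'\,\ga(m)^{-1}$ to realize an arbitrary element $(h', n')$. Since $N$ is closed and normal in $G_\pi$, the closedness of the left hand side in $G_\pi$ is equivalent to the closedness of $\Ga_\pi \cdot A$ in $\Aut(N)_\pi$.

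Next I would assemble the $\QQ$-algebraic data. Since $\pi$ is rational, its coadjoint orbit $\mathcal{O}_\pi$ contains a $\QQ$-point $l$, so $\mathcal{O}_\pi = \Ad^*(N)\, l$ is $\QQ$-defined in $\mathfrak n^*$; hence its stabilizer $\Aut(N)_\pi$ in the $\QQ$-group $\Aut(N)$ is $\QQ$-algebraic. By Proposition~\ref{Prop-RatStab}, $L_\pi$ is rational, so $\overline N$ is a $\QQ$-group, $\psi$ is a $\QQ$-morphism, $\Ad(\overline N) \subset \Aut(\overline N)$ is a $\QQ$-subgroup, and therefore $A$ is a $\QQ$-subgroup of $\Aut(N)_\pi$. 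Furthermore $\Ga_\pi = \Aut(N)_\pi \cap \Autnil$ is the intersection of $\Aut(N)_\pi$ with the arithmetic group $\Autnil \subset \Aut(N)$, hence is arithmetic in $\Aut(N)_\pi$.

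Finally I would invoke the classical theorem of Borel: for a $\QQ$-algebraic group $G$, an arithmetic subgroup $\Ga \subset G(\RRR)$, and a $\QQ$-subgroup $H$, the product $\Ga \cdot H(\RRR)$ is closed in $G(\RRR)$ (see e.g.\ Corollary~1.12 in Raghunathan's \emph{Discrete Subgroups of Lie Groups}). Applied with $G = \Aut(N)_\pi$, $\Ga = \Ga_\pi$, $H = A$, this yields the closedness of $\Ga_\pi \cdot A$, and hence of $(\Ga_\pi \ltimes N) P_{\widetilde\pi}$. The hard part is the $\QQ$-structure bookkeeping---especially confirming that $\Aut(N)_\pi$ is $\QQ$-algebraic, which relies on the fact that the rationality of $\pi$ forces the orbit $\mathcal{O}_\pi = \Ad^*(N)\, l$ of a rational functional $l$ to be defined over $\QQ$.
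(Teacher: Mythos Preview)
Your argument is correct, but it takes a genuinely different route from the paper's.

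Both proofs begin the same way: push the problem down to the automorphism factor and reduce to showing that a product (discrete group) $\cdot$ (algebraic group) is closed. You phrase this as $\Ga_\pi \cdot A$ closed in $\Aut(N)_\pi$, where $A=\psi^{-1}(\Ad(\overline N))$; the paper works equivalently in $\Aut(\overline N)$ and has to show that $\vfi(\Ga_\pi)\cdot\Ad(\overline N)$ is closed there. The divergence is in how this last step is handled. You invoke the general Borel theorem that an arithmetic subgroup times a $\QQ$-subgroup is closed, after assembling the necessary $\QQ$-structure (rationality of $\mathcal O_\pi$, of $L_\pi$, of $\Ad(\overline N)$, and arithmeticity of $\Ga_\pi$). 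The paper instead gives a bare-hands sequential argument: since $\Ad(\overline\La)$ is cocompact in $\Ad(\overline N)$, any sequence $\vfi(\ga_i)x_i\to g$ can be modified so that the $x_i$ lie in a compact set; extracting a convergent subsequence forces $\vfi(\ga_i)\delta_i$ (with $\delta_i\in\Ad(\overline\La)\subset\vfi(\Ga_\pi)$) to converge inside the \emph{discrete} group $\Aut(\overline\La\backslash\overline N)$, hence to stabilize, which puts $g$ in the product.

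Your approach is cleaner conceptually and makes transparent why rationality of $\pi$ matters, at the cost of the $\QQ$-bookkeeping and an appeal to a nontrivial structural theorem. The paper's approach avoids that machinery entirely by exploiting just two concrete facts: cocompactness of $\Ad(\overline\La)$ in $\Ad(\overline N)$ and discreteness of $\Aut(\overline\La\backslash\overline N)$; in effect it is the proof of Borel's theorem specialized to this (unipotent, cocompact) situation.
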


\begin{proof}
Using Proposition~\ref{Prop-ProKerExt}, we see that 
$$P_{\widetilde\pi} N = \vfi^{-1} \left(\Ad(\overline N) \ltimes \overline N\right)$$
 and hence  
 $$(\Ga_\pi\ltimes N)P_{\widetilde\pi}= \vfi^{-1} \left((\vfi(\Ga_\pi)\Ad(\overline N))\ltimes \overline N\right) .$$
 It therefore suffices  to show that  $\vfi(\Ga_\pi)\Ad(\overline N)$
 is closed in  $\Aut(\overline N).$

Observe that,  for every $\ga\in \Ga_\pi,$ we have $\ga (\La )=\La$
(since $\Ga_\pi\subset \Autnil$) and hence
 $\vfi(\Ga_\pi)\subset  \Aut(\overline{\La}\bs\overline{N}).$
 
  Let $(\ga_i)_i$ and $(x_i)_i$ be sequences in
 $\Ga_\pi$ and in $\Ad(\overline{N})$ such that  
$$\lim_i \vfi(\ga_i) x_i =g\in \Aut(\overline N).$$
 Since  $\Ad(\overline\La)$ is a cocompact lattice in  $\Ad(\overline N),$ 
 there exists a compact subset $D$ of  $\Ad(\overline N)$ such that  $x_i= \delta_i d_i$ for some
 $\delta_i\in \Ad(\overline\La)$ and $d_i\in D$.
 As $D$ is compact, we can assume that  
$\lim d_i=d\in \Ad(\overline N)$
 exists.  Then  $\lim_i \vfi(\ga_i) \delta_i= gd^{-1}.$  Now,
 $$\Ad(\overline\La)=\vfi(\Ad(\La)) \subset  \vfi(\Ga_\pi)$$
 and $\vfi(\Ga_\pi)$ is a subgroup of the
 discrete  group  $ \Aut(\overline{\La}\bs\overline{N}).$ It follows that
 $gd^{-1}\in  \vfi(\Ga_\pi),$ that is, $g\in  \vfi(\Ga_\pi)\Ad(\overline N).$
 Hence,  $\vfi(\Ga_\pi)\Ad(\overline N)$
 is closed in  $\overline N.\ \bsq$
 \end{proof}

 \begin{corollary}
\label{Cor-DecayRatRep}
 Let $\Delta= \{(\Ad(x), x^{-1} z)\ :  x\in \overline \La, z\in Z(\overline N)  \}$
 and $\vfi: G_\pi \to  \Aff(\overline N)$ the canonical projection, where $\overline N= N/L_\pi.$
  The restriction of  $\widetilde{\pi}$ to $\Ga_\pi\ltimes N$
is  strongly $L^p$  modulo   $\vfi^{-1} (\Delta) \cap (\Ga_\pi\ltimes N )$
 for the real number  $p$  appearing in Proposition~\ref{Prop1}.
\end{corollary}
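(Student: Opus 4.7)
The plan is to establish the conclusion in two stages: first, show that $\widetilde{\pi}|_{\Ga_\pi \ltimes N}$ is strongly $L^p$ modulo the larger subgroup $P_{\widetilde\pi} \cap (\Ga_\pi \ltimes N)$, and then descend to the smaller subgroup $\vfi^{-1}(\Delta) \cap (\Ga_\pi \ltimes N)$ using the finite-index statement in Proposition~\ref{Pro-ProjKernNilman}(ii).

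For the first stage, Proposition~\ref{Prop1} furnishes a dense subspace $D\subset \H$ such that for all $\xi,\eta\in D$ the function $|C^{\widetilde\pi}_{\xi,\eta}|$ descends to an element of $L^p(G_\pi / P_{\widetilde\pi})$. The decisive structural input is Proposition~\ref{Prop-ClosedSubg}, which guarantees that the product $(\Ga_\pi \ltimes N) P_{\widetilde\pi}$ is closed in $G_\pi$. Consequently the natural continuous bijection from $(\Ga_\pi \ltimes N)/((\Ga_\pi \ltimes N) \cap P_{\widetilde\pi})$ onto $(\Ga_\pi \ltimes N) P_{\widetilde\pi}/P_{\widetilde\pi}$ is a homeomorphism presenting the source as a closed subspace of $G_\pi/P_{\widetilde\pi}$. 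A routine application of Weil's integration formula, using that both $\Ga_\pi \ltimes N$ and $G_\pi$ are unimodular (the former because $\Ga_\pi$ is discrete and acts on $N$ by measure-preserving automorphisms), identifies the $(\Ga_\pi \ltimes N)$-invariant measure on the left-hand quotient with the restriction, up to a positive constant, of the invariant measure on $G_\pi/P_{\widetilde\pi}$ to this closed subspace. Restricting $|C^{\widetilde\pi}_{\xi,\eta}|$ to $\Ga_\pi \ltimes N$ therefore yields an element of $L^p((\Ga_\pi \ltimes N)/((\Ga_\pi \ltimes N) \cap P_{\widetilde\pi}))$ for the same exponent $p$.

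The passage to $\vfi^{-1}(\Delta) \cap (\Ga_\pi \ltimes N)$ is then immediate. By Proposition~\ref{Pro-ProjKernNilman}(ii), this subgroup has finite index $k$ in $P_{\widetilde\pi} \cap (\Ga_\pi \ltimes N)$, and the matrix coefficient is already constant on $P_{\widetilde\pi}$-cosets, hence \emph{a fortiori} on cosets of the smaller subgroup; thus its $L^p$-norm over the finer coset space is exactly $k^{1/p}$ times its $L^p$-norm over the coarser one, which has just been shown to be finite.

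The main obstacle will be the measure-compatibility step in the middle paragraph: one must verify that the image of the $(\Ga_\pi \ltimes N)$-invariant measure on the source agrees, up to a scalar, with the restriction of the ambient invariant measure on $G_\pi/P_{\widetilde\pi}$. This requires checking modular-function agreement for all intermediate closed subgroups, where the explicit description of $P_{\widetilde\pi}\cap(\Ga_\pi \ltimes N)$ from Proposition~\ref{Pro-ProjKernNilman}(i) should make the relevant modular functions tractable. Closedness of $(\Ga_\pi \ltimes N) P_{\widetilde\pi}$ is essential throughout, for without it the embedded coset space would fail to be locally compact in the subspace topology and Weil's formula would not apply.
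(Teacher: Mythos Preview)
Your two-stage strategy --- first pass from $P_{\widetilde\pi}$ to $P_{\widetilde\pi}\cap(\Ga_\pi\ltimes N)$ using the closedness of $(\Ga_\pi\ltimes N)P_{\widetilde\pi}$, then to $\vfi^{-1}(\Delta)\cap(\Ga_\pi\ltimes N)$ via finite index --- is exactly the route the paper takes (with the two stages in the opposite order, which is immaterial). The paper, however, does not attempt to justify the first stage directly: it simply invokes the proof of Proposition~6.2 in \cite{HoMo} for the statement that the strong-$L^p$ property passes to a closed subgroup $H$ once $HP_{\widetilde\pi}$ is closed.

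Your proposed justification of that step has a gap. You argue that the $(\Ga_\pi\ltimes N)$-invariant measure on $(\Ga_\pi\ltimes N)/\bigl((\Ga_\pi\ltimes N)\cap P_{\widetilde\pi}\bigr)$ is, up to a scalar, the \emph{restriction} of the invariant measure on $G_\pi/P_{\widetilde\pi}$ to the closed orbit, and conclude that restricting an $L^p$ function yields an $L^p$ function. But the closed orbit $(\Ga_\pi\ltimes N)P_{\widetilde\pi}/P_{\widetilde\pi}$ typically has measure zero in $G_\pi/P_{\widetilde\pi}$: the group $\Ga_\pi\ltimes N$ has dimension $\dim N$, whereas $G_\pi$ is an algebraic subgroup of $\Aut(N)\ltimes N$ that is usually strictly larger, and there is no reason for $(\Ga_\pi\ltimes N)P_{\widetilde\pi}$ to be open. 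When the orbit is null, the restricted measure is zero and carries no information; conversely, knowing only that a function lies in $L^p(G_\pi/P_{\widetilde\pi})$ says nothing about its behaviour on a null set. Weil's formula and modular-function bookkeeping do not rescue this: they identify the intrinsic invariant measure on the orbit, but give no comparison with the ambient one when the orbit is null. The Howe--Moore argument the paper cites does not proceed by measure restriction; it uses the concrete realisation of $\widetilde\pi$ (ultimately as an induced representation built from the metaplectic representation) to obtain pointwise decay estimates for a dense set of matrix coefficients that can then be integrated over $H/(H\cap P_{\widetilde\pi})$ directly.
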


\begin{proof} 
We know from Proposition~\ref{Pro-ProjKernNilman} that   $\vfi^{-1} (\Delta) \cap (\Ga_\pi\ltimes N )$ has finite index
in  $ P_{\widetilde \pi}\cap (\Ga_\pi\ltimes N ).$ Hence,
it suffices to prove that  the restriction of  $\widetilde{\pi}$ to $\Ga_\pi\ltimes N$
is   strongly $L^p$  modulo  $ P_{\widetilde \pi}\cap(\Ga_\pi\ltimes N ).$ 

By  Proposition~\ref{Prop-ClosedSubg}, $(\Ga_\pi\ltimes N)P_{\widetilde \pi}$ is closed in $G_\pi.$   Therefore,
$(\Ga_\pi \ltimes N) P_{\widetilde \pi}/P_{\widetilde \pi}$ is homeomorphic as a $(\Ga_\pi\ltimes N)$-space to 
$(\Ga_\pi \ltimes N)/ (P_{\widetilde \pi}\cap(\Ga_\pi\ltimes N)).$
It follows from Proposition ~\ref{Prop1} (see the proof of Proposition 6.2 in \cite{HoMo})  that
the restriction of  $\widetilde{\pi}$ to $\Ga_\pi\ltimes N$ 
is   strongly $L^p$  modulo  $P_{\widetilde \pi}\cap(\Ga_\pi\ltimes N).\ \bsq$
\end{proof}

\section{A general  estimate for norms of convolution operators }
\label{S-Nevo}

Let $G$ be a locally compact group.
   For a unitary representation $(\pi,\H)$ of $G$, the contragredient (or conjugate) representation
 $\overline\pi$ acts on the conjugate Hilbert space $\overline \H$.
Recall that, for an integer $k\geq 1,$   the $k$-fold tensor product $\pi^{\otimes k}$
of $\pi$  is a unitary representation of $G$ 
acting on the tensor product Hilbert space ${\H}^{\otimes k}.$ 

We will need in a crucial way the following estimate which
 appears in the  proof of Theorem~1 in \cite{Nevo}.
\begin{proposition}
\label{Pro-Nevo}
 Let $\mu$  be a  probability measure  on 
the Borel subsets of $G.$
 Let  $(\pi,\H)$ be a unitary representation  of $G$.
For every integer  $k\geq 1,$ we have 
$$
\Vert \pi(\mu)\Vert \leq \Vert \left( \pi\otimes\overline{\pi}\right)^{\otimes k}(\mu)\Vert^{1/2k},
$$
\end{proposition}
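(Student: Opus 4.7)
The plan is to bound $\Vert \pi(\mu)\Vert^{2k}$ directly by $\Vert (\pi \otimes \overline{\pi})^{\otimes k}(\mu)\Vert$ using a single application of Jensen's inequality, then take the $2k$-th root. Fix a unit vector $\xi \in \H$ and consider the distinguished unit vector
$$v_k := \xi^{\otimes k} \otimes \overline{\xi}^{\otimes k} \in \H^{\otimes k}\otimes \overline{\H^{\otimes k}}.$$
The key observation is that on the diagonal,
$$\langle ((\pi \otimes \overline{\pi})^{\otimes k})(g)\, v_k, v_k\rangle = \langle \pi(g)\xi, \xi\rangle^{k}\,\overline{\langle \pi(g)\xi, \xi\rangle}^{k} = |\langle \pi(g)\xi, \xi\rangle|^{2k},$$
because each tensor factor $\pi\otimes\overline{\pi}$ evaluated at $g$ contributes $|\langle \pi(g)\xi,\xi\rangle|^2$ on the diagonal rank-one vector $\xi\otimes\overline{\xi}$.

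First I would expand, using the unitarity of $\pi$,
$$\Vert \pi(\mu)\xi\Vert^2 = \int_G\int_G \langle \pi(h^{-1}g)\xi, \xi\rangle\, d\mu(g)\,d\mu(h),$$
and apply Jensen's inequality to the convex function $z \mapsto |z|^{2k}$ against the probability measure $\mu\otimes\mu$ to obtain
$$\Vert \pi(\mu)\xi\Vert^{4k} \leq \int_G\int_G |\langle \pi(h^{-1}g)\xi, \xi\rangle|^{2k}\, d\mu(g)\,d\mu(h).$$

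Second, I would recognize the right-hand side as a squared norm. Repeating the same $\Vert\cdot\Vert^2$-expansion but with the representation $(\pi\otimes\overline{\pi})^{\otimes k}$ acting on the unit vector $v_k$ gives
$$\Vert ((\pi \otimes \overline{\pi})^{\otimes k})(\mu)\, v_k\Vert^2 = \int_G\int_G \langle ((\pi\otimes\overline{\pi})^{\otimes k})(h^{-1}g)\, v_k, v_k\rangle\, d\mu(g)\,d\mu(h),$$
which by the diagonal identity above equals exactly the integrand from the Jensen estimate. Thus
$$\Vert \pi(\mu)\xi\Vert^{4k} \leq \Vert ((\pi \otimes \overline{\pi})^{\otimes k})(\mu)\, v_k\Vert^2 \leq \Vert ((\pi \otimes \overline{\pi})^{\otimes k})(\mu)\Vert^{2},$$
so $\Vert \pi(\mu)\xi\Vert^{2k} \leq \Vert ((\pi \otimes \overline{\pi})^{\otimes k})(\mu)\Vert$. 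Taking the supremum over unit $\xi \in \H$ and then the $2k$-th root yields the claim.

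The proof is short: no serious obstacle arises, since the ``rank-one'' vector $v_k$ is exactly designed so that its diagonal matrix coefficient is $|C^{\pi}_{\xi,\xi}(g)|^{2k}$. The only bookkeeping to keep straight is that Jensen is applied with the $k$-dependent exponent $2k$ (not iterated, which would produce the weaker bound $\Vert\cdot\Vert^{1/2^k}$), and that the norm-squared identity for $\pi(\mu)\xi$ must be developed before invoking convexity so that the same integrand appears on both sides.
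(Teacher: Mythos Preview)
Your proof is correct and is essentially identical to the paper's: both expand $\Vert\pi(\mu)\xi\Vert^2$ as the integral of a diagonal matrix coefficient against the probability measure $\check\mu\ast\mu$ (equivalently $\mu\otimes\mu$), apply Jensen with exponent $2k$, and then identify the resulting integrand $|\langle\pi(g)\xi,\xi\rangle|^{2k}$ as the diagonal matrix coefficient of $(\pi\otimes\overline\pi)^{\otimes k}$ on the rank-one tensor vector. The only cosmetic difference is that the paper writes the vector as $(\xi\otimes\xi)^{\otimes k}$ in $(\H\otimes\overline\H)^{\otimes k}$ while you reorder the factors as $\xi^{\otimes k}\otimes\overline\xi^{\otimes k}$ in $\H^{\otimes k}\otimes\overline\H^{\otimes k}$; these are unitarily equivalent and the computation is the same.
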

\begin{proof}
Denote by $\check{\mu}$ the probability measure on $G$  defined
by  $\check{\mu}(A)= \mu(A^{-1})$ for every Borel subset $A$ of  $G.$

 Using Jensen's inequality, we have  for every   vector $\xi\in\H,$
\begin{align*}
\Vert\pi(\mu)\xi\Vert^{4k} 
&=\vert\langle\pi(\check{\mu}\ast\mu)\xi,\xi \rangle\vert^{2k}\\
&=\left\vert\int_{ G} \langle \pi(g)\xi,\xi\rangle
d(\check{\mu}\ast\mu)(g)\right\vert^{2k}\\
&\leq\int_G\langle \vert\pi(g)\xi,\xi \rangle\vert^{2k}
d(\check{\mu}\ast\mu)(g)\\
&=\int_G\vert\langle (\pi\otimes\overline{\pi})(g)(\xi\otimes \xi),\xi\otimes \xi\rangle\vert^k d(\check{\mu}\ast\mu)(g)\\
&=\int_G\langle (\pi\otimes\overline{\pi})^{\otimes k}(g)(\xi\otimes \xi)^{\otimes k},(\xi\otimes \xi)^{\otimes k} \rangle d(\check{\mu}\ast\mu)(g)\\
&=\vert\langle(\pi\otimes\overline{\pi})^{\otimes k}(\check{\mu}\ast\mu)(\xi\otimes \xi)^{\otimes k}, (\xi\otimes \xi)^{\otimes k}\rangle\vert\\
&= \Vert (\pi\otimes\overline{\pi})^{\otimes k}(\mu)(\xi\otimes \xi)^{\otimes k}\Vert^{2}.
\end{align*}
and the claim follows. $\bsq$
\end{proof}

\section{Analysis of the Koopman representation of the affine group of a nilmanifold}
\label{S7}

Let $N$ be a connected and simply connected nilpotent Lie group,
$\La$  a lattice in $N.$ 
There is a unique  translation invariant probability
measure $\nu_{\nil}$ on  $\nil$ and it is  induced by a Haar measure
on $N.$ This measure is also invariant under $\Aut (\nil).$

We fix throughout this section   a subgroup $\Ga$ of  $\Autnil.$ 
The Koopman representation $U$ of $\Ga\ltimes N$ 
associated to the action 
of  $\Ga\ltimes N$ on  $\nil$ is  given by
$$
U(\ga,n) \xi(x)=\xi(\ga^{-1}(x)n) \qquad\ga\in\Ga,\  n\in N, \ \xi\in L^2(\nil), \ x\in \nil.
$$
In particular, we have
$$
 U({\ga^{-1}}) U(n) U({\ga})= U({\ga^{-1}(n)}) \tout \ga\in \Ga,\ n\in N.\leqno{(1)}
$$

Recall that $T=\tor$ is the maximal factor torus associated 
to $\nil.$ The action of $\Affnil$ on $\nil$ induces an action
of  $\Affnil$ on $T.$ 
We identify $L^2(T)$ with a closed subspace
of  $L^2(\La\bs N).$

 More generally, let $L$ be a 
connected closed  
subgroup of $N$ which is both rational and invariant under $\Ga.$
Then $\La \cap L$ is a lattice in 
$L$ and $\overline \La= \La L/L$ is a lattice
in $\overline N= N/L.$   There is an induced
action  of  $\Ga\ltimes N$ on the subnilmanifold
$L/(\La\cap L)$ and on the factor nilmanifold
$\overline \La \bs \overline N.$ 
The canonical mapping $p:\nil \mapsto {\overline \La}\bs {\overline N}$
is $\Ga\ltimes N$-equivariant and presents $\nil$ as
a fibre bundle over  ${\overline \La}\bs {\overline N}$
with fibres diffeomorphic to  $L/(\La\cap L).$ 
The Hilbert space $L^2(\overline \La\bs \overline N)$ can be identified,
as $\Ga\ltimes N$-representation, with the  $\Ga\ltimes N$-invariant closed subspace of 
$L^2(\nil)$ consisting of the  square-integrable functions on $\nil$ 
which are constant on the fibres of $p.$

We write $$L^2(\La\bs N)= L^2(T) \oplus \H,$$ where
$\H$ is the  orthogonal complement of  $L^2(T)$
on $L^2(\nil),$ and
observe that $\H$
is invariant under $\Affnil.$

We are going to show that  the restriction 
of $U$ to $\H$ has a canonical decomposition  
into a direct sum of induced
representations from  the stabilizers in $\Ga\ltimes N$ of 
certain representations $\pi\in \hN$;
this decomposition can be viewed
as  generalization of the decomposition 
of $L^2(T)$ which appears in the proof of Proposition~\ref{Pro-AffAut}.

 Since $\La$ is cocompact in $N,$ we   can consider the decomposition of 
 $\H$  into its $N$- isotypical components:  we  have
$$
\H=\bigoplus_{\pi\in\Sigma} \H_{\pi},
$$
where $\Sigma$ is a certain set of infinite-dimensional
pairwise non-equivalent irreducible unitary representations of $N;$
for every $\pi\in\Sigma$, the space $\H_{\pi}$
is the union of the closed $U(N)$-invariant subspaces $\K$
of $\H$ for which the corresponding representation of 
$N$ in ${\K}$   is  equivalent to $\pi$.
According to \cite[Corollary2]{Moore}, every $\pi\in\Sigma$
is rational in the sense of Section~\ref{S5}.
Every $\H_{\pi}$ is a direct sum of finitely many irreducible unitary  representations;
 therefore,  the restriction  of $U(N)$  to ${\H_{\pi}}$ is unitarily equivalent
to a tensor product $\pi \otimes I$
acting on $\K_\pi\otimes \L_\pi,$
where  $\K_{\pi}$ is the Hilbert space of 
$\pi$ and where $\L_{\pi}$ is a finite
dimensional Hilbert space.  (For a precise computation of the dimension of $\L_\pi$,
see \cite{Howe1} and \cite{Richardson}; the fact that $\L_\pi$
is finite-dimensional will not be relevant for our arguments.)

Let $\ga$ be a fixed automorphism in $\Ga.$ 
Let $U^\ga$ be the conjugate representation of $U$ by $\ga,$ that is,
$ U^\ga(g)=U(\ga^{-1}(g))$ for all $g\in G.$
On the one hand, for every $\pi\in\Sigma,$ 
 the subspace $\H_{\pi^{\ga^{-1}}}$
 is the isotypical component of  $U^\ga\vert _{N}$
 corresponding to $\pi.$
On the other hand,  relation $(1)$ shows that
$U(\ga^{-1})$ provides a unitary equivalence between
 $U\vert_{N} $ and  $U^\ga\vert _{N}.$ 
  It follows that 
 $$
U({\ga}^{-1}) (\H_{\pi}) = \H_{\pi^{\ga{-1}}}\tout \ga\in \Ga
$$
In summary, we see that
  $\Ga$ permutes the $\H_\pi$'s among themselves
according to its action on $\hN.$

Write $\Sigma=\bigcup_{i\in I} \Sigma_i,$ where the  $\Sigma_i$'s are   the $\Ga$-orbits
 in $\Sigma,$ and set
$$
\H_{\Sigma_i}=\bigoplus_{\pi\in\Sigma_i} \H_{\pi}.
$$
Every $\H_{\Sigma_i}$ is invariant under $\Ga_i\ltimes N$
and we have  an orthogonal decomposition  
$$
\H= \bigoplus_{i} \H_{\Sigma_i}.
$$
Fix $i\in I.$ Choose  a representation
${\pi}_i $ in $\Sigma_i$ and set $\H_i= \H_{\pi_i}.$
Let  $\Ga_i$  denote the stabilizer
of $\pi_i$  in $\Ga.$ 
The space  $\H_i$
is invariant under  $\Ga_i\ltimes N.$ 
Let $V_i$ be  the corresponding representation
of $ \Ga_i\ltimes N$  on $\H_i.$

Choose   a set $S_i$ of representatives
for the cosets in 
$$\Ga/\Ga_i= (\Ga\ltimes N)/ (\Ga_i\ltimes N)$$
 with $e\in S_i.$
Then $\Sigma_i=\{ \pi_i^s\ :\ s\in S_i\}$ 
and the Hilbert space $\H_{\Sigma_i}$
is the sum of mutually orthogonal spaces:
$$
\H_{\Sigma_i}=
\bigoplus_{s\in S_i}\H_i^s.
$$
Moreover, $\H_i^s$
is the image under $U (s)$
of $\H_i$
for every $s\in S_i.$ This exactly means  that  the restriction   $U_i$ of $U$
to  $\H_{\Sigma_i}$ of the Koopman representation $U$  of $\Ga\ltimes N$  
is equivalent to the induced representation 
$\ind_{\Ga_i\ltimes N}^{\Ga\ltimes N} {V_i}.$

As we have seen above, we can assume that
$\H_i$ is the tensor product 
$$
\H_i  =\K_i\otimes \L_i
$$
of the Hilbert space $\K_i$ of $\pi_i$ with
a finite dimensional Hilbert space $\L_i,$ 
in such a way that
$$
V_i(n)= \pi_{i}(n) \otimes I_{\L_i} \tout n\in N.\leqno{(2)}
$$
Let  $g\in \Ga_i\ltimes N.$ By $(1)$ and $(2)$ above,  we have 
 $$
 V_i(g) \left(\pi_{i}(n) \otimes I_{\L_i}\right)V_i(g) ^{-1}  = \pi_{i}(gng^{-1}) \otimes I_{\L_i} \tout  \ n\in N.\leqno{(3)}
$$
On the other hand, let  $G_i$ be the stabilizer of $\pi_i$ in $\Aff (N)$; then $\pi_i$ 
extends to   an irreducible projective representation $ \widetilde{\pi}_i$
 of $G_i$  (see the remark 
 just before Proposition~\ref{Prop1}).
Since 
$$
 \widetilde{\pi_i}(g)  \pi_{i}(n)\ \widetilde{\pi_i}(g^{-1})= \pi_{i}(gng^{-1}) \tout  n\in N, 
$$
it follows from $(3)$  that 
the operator  $\left(\widetilde{\pi_i}(g^{-1})\otimes I_{\L_i}\right)V_i(g)$
commutes with $\pi_i(n)\otimes I_{\L_i}$ for all $n\in N.$ 
 Since $\pi_i$ is irreducible,  there exists a unitary operator
$W_i(g)$ on $\L_i$ such that 
$$
V_i(g)= \widetilde{\pi_i}(g)\otimes W_i(g).
$$
 It is clear that $W_i$ is a projective unitary representation
of $\Ga_i\ltimes N$,  since  $V_i$  is a unitary representation
of $\Ga_i\ltimes N$.

\section{Proof of Theorem~\ref{Theo1}: first step}
\label{S8}

We summarize  the  discussion from the previous section.
We have a first orthogonal decomposition into $\Affnil$-invariant 
subspaces
$$
L^2(\La\bs N)= L^2(T) \oplus \H, 
$$
where $T$ is the maximal torus factor of $\nil.$
Let $\Ga$ be a  subgroup of  $\Autnil$.
There exists  a sequence
of $\Ga$-invariant  sets   $({\Sigma_i})_{i\in I}  $
of rational infinite dimensional unitary  irreducible representations of $N$ such that  we have a decomposition
into mutually orthogonal $\Ga\ltimes N$-invariant subspaces
$$\H=\bigoplus_{i\in I} \H_{\Sigma_i}
$$
with the following property:  for every $i,$  the representation  $U_i$ of 
$\Ga\ltimes N$ defined   on  $\H_{\Sigma_i}$
is equivalent to
$$
 \ind_{\Ga_i\ltimes N}^{\Ga\ltimes N} \left(\widetilde{\pi_i}\otimes W_i\right),
 $$
where  $\pi_i$ is a  representation from ${\Sigma_i},$
where $\widetilde{\pi_i}$ is  the restriction to $\Ga_i\ltimes N$
of  an extension of $\pi_i$ to  the stabilizer  $G_i$ of $\pi_i$ in $G=\Aff (N)$,
and where  $W_i$ is some finite dimensional
projective unitary representation of $\Ga_i\ltimes N.$

We need to recall the decomposition of the representation  $\Utor$ of $\Ga$ on
$L^2_0(T)$ from Section~\ref{S-ProofGenAff}.
Let $\widehat{T}\cong \ZZ^d$ be the dual group of $T$
and let $S$ be a set of representatives for the $\Ga$-orbits in
$\widehat{T}\setminus\{1_T\}.$
Then
$$
\Utor \cong \bigoplus_{\chi\in S} \la_{\Ga/\Ga_\chi}, \leqno{(4)}
$$
where $\Ga_\chi$ is the stabilizer of $\chi$ in $\Ga$ and 
$\la_{\Ga/\Ga_\chi}$ is the natural representation
of $\Ga$ on $\ell^2(\Ga/\Ga_\chi).$

In the  following result,  we establish
a link between the restrictions  to  $\H$ and  to $L^2_0(T)$ of the Koopman representation
of $\Ga$.  This result,
which is a  consequence of the discussion above  and of 
results from Section~\ref{S5}, is a major  step
in our proof of Theorem~\ref{Theo1}.

Recall that  $\paut$  denotes the 
canonical projection $\Affnil\to \Autnil$. For a probability measure
$\mu$  on $\Affnil,$  let $\paut(\mu)$ 
be  the probability measure on $\Autnil$ which is the image of  $\mu$ under $\paut.$

\begin{proposition}
\label{Pro-FixAut}
There exists an integer $k\geq 1 $ only depending on $\dim N$
with the following property.  
Let $\Ga$ be a subgroup of  $\Autnil$ which
stabillizes some $\pi\in \hN$ appearing in the decomposition
$\H=\bigoplus_{\pi\in\Sigma} \H_{\pi} $ 
of $\H$ into isotypical components under $N$. For every 
probability  measure $\mu$ on
$\Ga\ltimes N$,  we have
$$
\Vert U_\pi(\mu))\Vert \leq  \Vert \Utor(\paut(\mu))\Vert^{1/2k},
$$
where  $U_\pi$ and $\Utor$ are the restrictions of the Koopman
representation  of $\Ga\ltimes N$ to $\H_{\pi}$
and $L^2_0(T)$ respectively.
\end{proposition}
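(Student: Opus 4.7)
My plan is to combine Nevo's tensor-power inequality (Proposition~\ref{Pro-Nevo}) with the strong $L^p$ decay of $\widetilde\pi$ on $\Ga\ltimes N$ provided by Corollary~\ref{Cor-DecayRatRep}, and then use Herz's majoration principle (Proposition~\ref{Pro-Herz}) repeatedly to descend to the decomposition $\Utor\cong\bigoplus_{\chi\in S}\la_{\Ga/\Ga_\chi}$ recalled in equation~$(4)$.

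First I would apply Proposition~\ref{Pro-Nevo} to the ordinary unitary representation $U_\pi$ to get
$$\Vert U_\pi(\mu)\Vert^{2k}\leq\Vert(U_\pi\otimes\overline{U_\pi})^{\otimes k}(\mu)\Vert.$$
Using the factorization $U_\pi\cong\widetilde\pi\otimes W_\pi$ from Section~\ref{S7}, with $W_\pi$ a finite-dimensional projective unitary representation trivial on $N$, and noting that the projective cocycles cancel with their conjugates, the right-hand side rewrites as the tensor product of two ordinary representations: the infinite-dimensional $\tau_k:=(\widetilde\pi\otimes\overline{\widetilde\pi})^{\otimes k}$ and the finite-dimensional $\sigma_k:=(W_\pi\otimes\overline{W_\pi})^{\otimes k}$.

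The second step is to exploit the decay of matrix coefficients. By Proposition~\ref{Prop1} there exists an integer $p$ depending only on $\dim\Aff(N)$, hence only on $\dim N$, such that $\widetilde\pi$ is strongly $L^p$ modulo its projective kernel; by Corollary~\ref{Cor-DecayRatRep} the restriction $\widetilde\pi|_{\Ga\ltimes N}$ is strongly $L^p$ modulo the normal subgroup $D:=\vfi^{-1}(\Delta)\cap(\Ga\ltimes N)$. Choose $k$ with $2k\geq p/2$, depending only on $\dim N$. Then the matrix coefficients of $\tau_k|_{\Ga\ltimes N}$ on simple tensors from the dense subspace are products of $2k$ moduli of matrix coefficients of $\widetilde\pi|_{\Ga\ltimes N}$, hence lie in $L^2((\Ga\ltimes N)/D)$ by H\"older's inequality. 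Since $\tau_k$ has $D$ in its kernel, it is contained in an infinite multiple of $\la_{(\Ga\ltimes N)/D}=\ind_D^{\Ga\ltimes N}1$. Tensoring with $\sigma_k$, the product $\tau_k\otimes\sigma_k$ is then contained in an infinite multiple of $\ind_D^{\Ga\ltimes N}(\sigma_k|_D)$, whose convolution operators are dominated in norm by those of $\la_{(\Ga\ltimes N)/D}$ via Herz's majoration; hence
$$\Vert(U_\pi\otimes\overline{U_\pi})^{\otimes k}(\mu)\Vert\leq\Vert\la_{(\Ga\ltimes N)/D}(\mu)\Vert.$$

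The third step descends to $\Ga$. Because $N$ is normal in $\Ga\ltimes N$, the set $DN$ is a subgroup; applying Herz's majoration to the tower $D\subset DN\subset\Ga\ltimes N$ gives
$$\Vert\la_{(\Ga\ltimes N)/D}(\mu)\Vert\leq\Vert\la_{(\Ga\ltimes N)/DN}(\mu)\Vert=\Vert\la_{\Ga/\paut(D)}(\paut(\mu))\Vert,$$
with the last identity using $DN/N=\paut(D)$. Proposition~\ref{Pro-ProjKernNilman} shows that any $\gamma\in\paut(D)$ induces an inner automorphism on $\overline N=N/L_\pi$ and so acts trivially on the abelianization $\overline N/[\overline N,\overline N]$. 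Since $L_\pi$ is rational (Proposition~\ref{Prop-RatStab}) and every non-trivial nilpotent group has a non-trivial abelianization, there exists a non-trivial $\chi\in\widehat T$ trivial on $L_\pi$; such $\chi$ is then fixed by $\paut(D)$, yielding $\paut(D)\subset\Ga_\chi$. One more application of Herz, together with equation~$(4)$, gives
$$\Vert\la_{\Ga/\paut(D)}(\paut(\mu))\Vert\leq\Vert\la_{\Ga/\Ga_\chi}(\paut(\mu))\Vert\leq\Vert\Utor(\paut(\mu))\Vert,$$
and combining all the inequalities yields the desired estimate. The main obstacle will be this last step, namely verifying that $\paut(D)$ lies in the stabilizer $\Ga_\chi$ of a non-trivial $\chi\in\widehat T$; this requires carefully unwinding the description of $D$ furnished by Proposition~\ref{Pro-ProjKernNilman} and using the rationality of $L_\pi$ together with nilpotence to produce an appropriate $\chi\in\widehat T\setminus\{1_T\}$ trivial on the image of $L_\pi$.
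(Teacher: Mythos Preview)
Your proof is correct and follows essentially the same route as the paper's. The only organizational difference is that the paper first shows $Q:=\vfi^{-1}(\Delta)\cap(\Ga\ltimes N)$ is contained in the projective kernel $P$ of $U_\pi$ itself (so that $U_\pi$ is strongly $L^p$ modulo $P$) and then passes to $\ind_P^{\Ga\ltimes N}\lambda_\pi$, whereas you work directly with $D=Q$ throughout, using that $D$ lies in the ordinary kernel of $\tau_k=(\widetilde\pi\otimes\overline{\widetilde\pi})^{\otimes k}$ and handling the finite-dimensional factor $\sigma_k$ separately via the formula $(\ind_D^G 1)\otimes\sigma_k\cong\ind_D^G(\sigma_k|_D)$; the two arrangements lead to the same Herz chain $\la_{(\Ga\ltimes N)/D}\to\la_{\Ga/\paut(D)}\to\la_{\Ga/\Ga_\chi}\to\Utor$.
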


\begin{proof}
Let $G_\pi$ be the stabilizer  of $\pi$ in $G=\Aff(N).$
Let $\widetilde{\pi}$ a projective representation
of $G_\pi$ extending  $\pi.$ 

As we have seen above,  
 $U_\pi$  is equivalent  to  
$(\widetilde{\pi}\vert_{\Ga\ltimes N})\otimes W$
for  some finite dimensional 
projective unitary representation $W$ of $\Ga\ltimes N.$
Let $P$ denote the projective kernel of $U_\pi.$
Observe that  $P=P_1\cap  P_2$,
where $P_1$ and $P_2$ are the projective kernels
of  $\widetilde{\pi}\vert_{\Ga\ltimes N}$   and $W$.

Denote by $L_\pi$ the connected component of $\Ker(\pi)$ and $\overline N=N/L_\pi.$
As in  Section~\ref{S5}, let $\vfi: G_\pi\to  \Aff(\overline N)$ be the 
corresponding homomorphism 
and $$\Delta= \{(\Ad(x), x^{-1} z)\ :  x\in \overline \La, z\in Z(\overline N)  \},$$
 where  $\overline \La$ is the lattice $ \La L_\pi/L_\pi$ 
 in  $\overline N$ and $Z(\overline N)$  the centre of $\overline N.$
Then 
$$Q:=\vfi^{-1} (\Delta) \cap (\Ga_\pi\ltimes N )$$
 is  a subgroup of finite index
of  $P_1$ (Proposition~\ref{Pro-ProjKernNilman}).  
By  Corollary~\ref{Cor-DecayRatRep}, 
there exists a real number  $p\geq 1$ only depending  on the
dimension of $\Aut(N)\ltimes N$ such that 
$\widetilde{\pi}_{\vert \Ga_\pi\ltimes N}$   is strongly $L^p$  modulo  $Q$.
  
  We claim that  $Q$ is   contained in  $P.$ Indeed, 
  for $g\in Q$, we have  
  $$\vfi(g) =(\Ad(x), x^{-1} z)$$
   for some  
  $x \in \overline \La$  and  $z\in Z(\overline N) .$
  Hence $\vfi(g)$ acts as the right translation by $z$ on 
   $L^2(\overline \La\bs \overline N).$  
 Observe that $\H_\pi$ is contained in $L^2(\overline \La\bs \overline N)$
and that $g$ acts  as $\vfi(g)$ on  $\H_\pi.$
 Since $ N$ acts as a multiple of  the irreducible representation $\pi$
 on $\H_\pi$,  it follows that  $g\in P$ and the claim is proved
 
 As a consequence, we see that $Q$ is  a subgroup of finite index 
 in $P$. Observe that $Q$ is also contained in $P_2.$
 It follows that $U_\pi= (\widetilde{\pi}\vert_{\Ga\ltimes N})\otimes W$
 is strongly  $L^p$  modulo     $Q$ and hence 
 $U_\pi$ is strongly $L^p$ modulo $P$.
 
 Let $k$ be an integer with $k\geq p/4.$ Then the tensor power 
$\left( U_\pi\otimes\overline{U_\pi}\right)^{\otimes k}$ is strongly
 $L^2$ modulo $P.$
 Hence,  as discussed in Section~\ref{S2}, 
 $\left( U_\pi\otimes\overline{U_\pi}\right)^{\otimes k}$ is contained in  
 an infinite multiple
of the induced representation $\ind_{P}^{\Ga \ltimes N} \lambda_\pi, $ 
for  the associated unitary  character  $\lambda_\pi$ of $P.$  
It follows that, for every  probability measure  $\mu$ on $\Ga\ltimes N,$ we have
$$
\Vert \left( U_\pi\otimes\overline{U_\pi}\right)^{\otimes k}(\mu)\Vert \leq \Vert\left( \ind_{P}^{\Ga \ltimes N} \lambda_\pi\right) (\mu)\Vert 
$$
and hence, using  Proposition~\ref{Pro-Nevo},
$$
\Vert U_\pi(\mu)\Vert  \leq  \Vert  \left(\ind_{P}^{\Ga \ltimes N} \lambda_\pi\right)(\mu) \Vert ^{1/2k}.
$$

On the other hand,  observe that $P N=\paut^{-1}(\paut (P))$  is  closed in $\Affnil,$ 
as $\Autnil$ is discrete.
Since, by induction by stages,
$$
\ind_{P}^{\Ga \ltimes N} \lambda_\pi = \ind_{P N}^{\Ga \ltimes N} \left(\ind_{P}^{P N} \lambda_\pi\right),
$$
we have, using  by Herz's  majoration principle (Proposition~\ref{Pro-Herz}),
$$
\Vert  \left(\ind_{P}^{\Ga \ltimes N} \lambda_\pi\right)(\mu)\Vert 
\leq  \Vert  \la_{(\Ga\ltimes N)/ P N}(\mu)\Vert.
$$
Now, $ \la_{(\Ga\ltimes N)/P N} = \left(\la_{\Ga/\paut(P)}\right)\circ \paut$ and hence
$$  
\Vert  \la_{(\Ga\ltimes N)/ P N}(\mu)\Vert = \Vert\la_{\Ga/\paut(P)} (\paut (\mu))\Vert.
$$
As a consequence, the proposition will be proved if we establish the following inequality
$$
\Vert\la_{\Ga/\paut(P)} (\paut (\mu))\Vert  \leq   \Vert \Utor(\paut(\mu))\Vert. \leqno {(5)}
$$
To show this, recall (see $(4)$ above) that  $\Utor$ is equivalent
to  the direct sum $\bigoplus_{\chi\in S} \la_{\Ga/\Ga_\chi},$
where  $S$ is   set of representatives  for the $\Ga$-orbits in
$\widehat{T}\setminus\{1_T\}.$ 
As a consequence, Inequality $(5)$ will be proved
if we can show that  there exists $\chi\in \widehat{T}\setminus\{1_T\}$
such that 
$$
\Vert\la_{\Ga/\paut(P)} (\paut (\mu))\Vert \leq  \Vert \la_{\Ga/\Ga_\chi} (\mu)\Vert.
$$

By Herz's majoration principle again, it suffices to show that 
exists $\chi\in \widehat{T}$ with $\chi\neq 1_T$ such that  $\paut(P)\subset \Ga_\chi.$
For this, recall  that, for every  $g\in P\subset P_1,$ there exists
$x\in \overline N$ such that $\ga=\paut(g)$ acts as $\Ad(x)$
on $ \overline N$ (Proposition~\ref{Pro-ProjKernNilman}).
 For every unitary character $\chi$ of $\overline N,$ 
we have 
$$
\chi(\vfi(\ga)(y)) = \chi (x yx^{-1}) = \chi(y) \tout y\in \overline N.
$$
Thus, $\paut(P)$ fixes every unitary character  of $\overline N.$

Observe that $\overline N$ is non-trivial, since $\pi\neq 1_N.$
Choose a  non-trivial unitary character  of  $\overline N$ which is constant
on the cosets of $\overline \Lambda$
and denote again by $\chi$ its lift  to $N.$ Then  
 $\chi\in \widehat{T}\setminus\{1_T\}$ and $\chi$ is fixed by $\paut(P)$. 
 $\bsq$

\end{proof}
\begin{remark}
\label{Rem2}
With Remark~\ref{Rem1}, we see that a rough estimate for the integer  $k$ appearing 
in the statement of Proposition~\ref{Pro-FixAut} is 
$$k\leq \frac{1}{4}\left(\dim \left({\Aut} (N)\ltimes N\right) +1\right)^2 +1\leq  \frac{1}{4}((\dim (N))^3 +1)^2 +1.
$$
\end{remark}

\begin{example}
\label{Exa-Heisenberg}
Let $N=H_{2n+1}(\RRR)$ be the $(2n+1)$-dimensional Heisenberg group (over $\RRR$) 
and let $\La$ be  a lattice in $N.$  Then  $\Autnil$ contains a subgroup
of finite index $\Ga$ consisting of automorphisms which fix  
every infinite dimensional representation  $\pi\in \hN$ (see \cite{Folland}).
Let $H$ be a countable subgroup of  $\Affnil$. Assume that
the action of  $H$ on $\nil$ does not 
have a spectral gap.  It follows
from Proposition~\ref{Pro-FixAut} that there is a  subgroup $H_1$ of 
finite index in $H,$  such that the action of $\paut(H_1)$ on $T$
does not have a spectral gap. Therefore, using Theorem~\ref{Theo3},
the action of $H_1$  and hence of the action of $H$ on $T$ does not have a spectral gap.
This result generalizes Theorem~3 in \cite{BeHe} to groups 
of affine transformations of Heisenberg nilmanifolds.
\end{example}

\section{Proof of Theorem~\ref{Theo1}: completion of the proof}
\label{S9}
We are now in position to give the proof of Theorem~\ref{Theo1}.
In view of Theorem~\ref{Theo3}, we only need  show that 
(ii) implies (i).

Let $H$  be a countable subgroup of  $\Affnil$.  Assume, by contraposition,
 that   the action of $H$ on $\nil$ does not have a   spectral gap.
We have to prove that  the action of $H$ on $T$  does not have a spectral gap. 

Set $\Ga=\paut(H).$   By Theorem~\ref{Theo3},  it suffices to prove that  the action  on $T$ of some  subgroup  of  finite index in  $\Ga$  does not have  a spectral gap.
 Let $U^\H$   be  the representation of $\Affnil$  on  the orthogonal complement $\H$ 
 of  $L^2(T)$ in $L^2(\nil)$  and    $\Utor$  the representation   on  $L^2_0(T).$
  Our theorem will be proved if we can show the following

 \medskip
\noindent
\textbf{Claim:}  Let $\mu$ be an  aperiodic measure on $H$.  Assume that  $\Vert U^\H(\mu)\Vert =1$.
 Then there exists a subgroup $\Delta$ of finite index in $\Ga$ and an aperiodic
 probability measure  $\nu$ on $\Delta$ such that
  $\Vert \Utor(\paut(\nu))\Vert=1.$   
  \medskip

To prove this claim, we proceed by induction 
on the dimension  of the Zariski closure 
$\ZC (\Ga)$ of $\Ga $ in $\Aut(N).$

If $\dim  \ZC (\Ga)=0,$ then $\Ga$ is finite and there is nothing to prove.

Assume that $\dim  \ZC (\Ga)\geq 1$ and that the claim above is proved for every 
 countable subgroup of  $H_1$ of $\Affnil$ for which  $\dim  \ZC (\paut(H_1)) <\dim  \ZC (\Ga).$

   Recall from Sections~\ref{S7} and  \ref{S8} that, as $\Ga\ltimes N$-representation,
  $U^\H$ is equivalent to a direct sum
$$\bigoplus_{i\in I}  \ind_{\Ga_i\ltimes N}^{\Ga\ltimes N} V_i,$$
where $\Ga_i$ is the stabilizer in $\Ga$ of  a rational representation $\pi_i\in \hN$ 
and  $V_i$ is a unitary representation  of $\Ga_i\ltimes N$.

Let $I_{\rm fin} \subset I$ be the set of all $i\in I$ such that 
$\Ga_i$ has finite index in $\Ga$ and set $I_{\infty}=I \setminus I_{\rm fin}.$
Let 
$$
U_{\rm fin} =  \bigoplus_{i\in I_{\rm fin} } \ind_{\Ga_i\ltimes N}^{\Ga\ltimes N} V_i \qquad\text{and}
\qquad
U_\infty= \bigoplus_{i\in I_{\infty} } \ind_{\Ga_i\ltimes N}^{\Ga\ltimes N} V_i
$$
and denote by  $\H_{\rm fin}$ and  $\H_{\infty}$  the corresponding subspaces of $\H$ 
defined respectively by $U_{\rm fin}$ and  $U_{\infty}.$
Since  $\Vert U^\H(\mu)\Vert =1,$  two cases can occur.

\noindent
$\bullet$ \emph{First case:}  we have $\Vert U_\infty (\mu)\Vert=1.$
By Herz's majoration principle,  we have
$$
\left\Vert\left(\ind_{\Ga_i\ltimes N}^{\Ga\ltimes N} V_i\right) (\mu) \right\Vert \leq 
\left\Vert \la_{(\Ga\ltimes N)/(\Ga_i\ltimes N)}(\mu)\right\Vert
$$
 for every $i\in I_{\rm fin}.$ Since  $\la_{(\Ga\ltimes N)/(\Ga_i\ltimes N)}= \la_{\Ga/\Ga_i} \circ \paut,$
 it follows that
$$
\left\Vert \bigoplus_{i\in I_{\infty }}\la_{\Ga/\Ga_i}(\paut(\mu))\right\Vert=1.
$$
Let $\eps>0.$ We can choose  $i\in I_{\infty} $ such that 
$$
\Vert \la_{\Ga/\Ga_i}(\paut(\mu))\Vert \geq 1-\eps \leqno{(6)}.
$$

We claim that
$\dim \ZC (\Ga_i)< \dim  \ZC (\Ga).$ Indeed,  otherwise
$\ZC (\Ga_i)$ and   $\ZC (\Ga)$ would have the same connected component $C^0,$ since  $\ZC (\Ga_i) \subset  \ZC (\Ga).$
 As the stabilizer of   $\pi_i$ in $\Aut(N)$
 is Zariski closed (Proposition~\ref {Prop-StabAlg}),  $ C^0 $  would  stabilize $\pi_i.$
 Therefore, $\Ga\cap C^0$ would be contained in $\Ga_i.$ But $\Ga\cap C^0$ 
 has finite index in $\Ga.$ Hence, $\Ga_i$ would have a finite index in $\Ga$
 and this would be a contradiction,   since  $i\in I_{\infty}.$

Let $\mu_i$ be a probability measure  with support equal to  $(\Ga_i\ltimes N)\cap H.$
Then  $(\mu_i+\mu)/2$ is an aperiodic probability measure on $H.$
  Since    $\Vert  U^\H(\mu)\Vert=1,$ we  also have $\Vert  U^\H ((\mu_i+\mu)/2)\Vert=1$.
Therefore,  $\Vert  U^\H(\mu_i)\Vert=1.$ 
Since  $\dim \ZC (\Ga_i)< \dim  \ZC (\Ga),$  it follows from the induction hypothesis
that  $ \Vert \Utor (\mu_i)\Vert =1.$ Then, by Theorem~\ref{Theo3}, we also have
 $ \Vert \Utor (\paut(\mu_i))\Vert =1.$

On the other hand, recall  from $(4)$ that, replacing $\Ga$ by $\Ga_i,$ 
the $\Ga_i$-representation $\Utor$ decomposes into a direct sum 
$$
\Utor \cong \bigoplus_{\chi\in S} \la_{\Ga_i/ (\Ga_\chi \cap \Ga_i )}.
$$
As a consequence, we have
$$
\left\Vert \bigoplus_{\chi\in S} ( \la_{\Ga_i/ (\Ga_\chi \cap \Ga_i) })(\paut(\mu_i))\right\Vert=1. 
$$
Observe that $\paut(\mu_i)$ is an aperiodic probability measure on $\Ga_i$
(in fact, the support of $\paut(\mu_i)$ is $\Ga_i$). It follows that  the $\Ga_i$-representation
$ \bigoplus_{\chi\in S}  \la_{\Ga_i/ (\Ga_\chi \cap \Ga_i) }$ weakly contains
the trivial representation $1_{\Ga_i}$.
Since  
$$
\ind_{\Ga_i}^\Ga 1_{\Ga_i}= \la_{\Ga/ \Ga_i}\qquad\text{and}\qquad  
\ind_{\Ga_i}^\Ga \la_{\Ga_i/ (\Ga_\chi \cap \Ga_i) }= \la_{\Ga/ (\Ga_\chi \cap \Ga_i )}
$$
it follows, by continuity of induction (see Proposition~{F.3.5} in \cite{BHV}), that 
the $\Ga$-representation  $ \bigoplus_{\chi\in S}  \la_{\Ga/ (\Ga_\chi \cap \Ga_i) }$ 
weakly contains  $\la_{\Ga/ \Ga_i}.$  As a consequence, we have
$$
\Vert  \la_{\Ga/ \Ga_i}(\paut(\mu))\Vert\leq \left\Vert  \bigoplus_{\chi\in S} 
(\la_{\Ga/ (\Ga_\chi \cap \Ga_i )})(\paut(\mu))\right\Vert.
$$
Observe that, by Herz's majoration principle again, we have
$$\Vert  \la_{\Ga/ (\Ga_\chi \cap \Ga_i } (\paut(\mu))\Vert \leq  \Vert  \la_{\Ga/ \Ga_\chi  }(\paut(\mu))\Vert. $$
Hence
\begin{align*}
\Vert  \la_{\Ga/ \Ga_i} (\paut(\mu))\Vert &\leq \left\Vert   \bigoplus_{\chi\in S} 
\la_{\Ga/ \Ga_\chi } (\paut(\mu)) \right\Vert\\
&=\Vert \Utor(\paut(\mu))\Vert.
\end{align*}
Using Inequality  $(6),$ it follows that 
$$
\Vert \Utor(\paut(\mu))\Vert \geq 1-\eps.
$$
Since this is true for every $\eps>0,$ we obtain that   $\Vert \Utor(\paut(\mu))\Vert =1.$

\medskip
\noindent
$\bullet$ \emph{Second case:}  we have $\Vert U_{\rm fin}(\mu) \Vert=1.$
By the Noetherian  property of the Zariski topology on $\Aut(N)$, we can find
finitely many indices $i_1, \dots, i_r$ in $I_{\rm fin}$ such that 
$$
\ZC(\Ga_{i_1})\cap\cdots \cap \ZC(\Ga_{i_r}) =\bigcap_{i\in I_{\rm fin}}  \ZC(\Ga_{i}).
$$
 Since stabilizers  of irreducible representations of $N$ are algebraic (Proposition~\ref{Prop-StabAlg}),
  the  subgroup $\Delta:=\Ga_{i_1}\cap \cdots\cap \Ga_{i_r}$ stabilizes   $\pi_i$ for every  $i\in I_{\rm fin}.$
 Moreover, $\Delta$ has finite index in $\Ga,$  since every $\Ga_i$ has finite index in $\Ga.$
 
 From Sections~\ref{S7} and ~\ref{S8}, we have a decomposition of 
 $\H_{\rm fin}$ into $\Delta\ltimes N$-invariant subspaces
 $$
 \H_{\rm fin} =  \bigoplus_{i\in I_{\rm fin} } \H_i,
 $$
 where $\H_i$ is the isotypical component corresponding to $\pi_i$ under 
 the action of $N.$
 Let $\nu$ be a probability  measure  with support equal to ${(\Delta\ltimes N)\cap H.}$ 
 Considering as above
the aperiodic measure  $(\mu+\nu)/2 $ on $H$, we  have 
 $\Vert U_{\rm fin} (\nu))\Vert =1,$ since   $\Vert U_{\rm fin}(\mu) \Vert=1.$ 
 
 On the other hand, by  Proposition~\ref{Pro-FixAut},
there exists an integer $k\geq 1 ,$ which is independent of $i,$ 
such that 
$$
\Vert U_i(\nu))\Vert \leq  \Vert \Utor(\paut(\nu))\Vert^{1/2k} \tout  i\in I_{\rm fin}
$$
where  $U_i$  is the representation of $\Delta \ltimes N$ on  $\H_i$.
As a consequence, we have
$$
\Vert U_{\rm fin} (\nu))\Vert \leq  \Vert \Utor(\paut(\nu))\Vert^{1/2k}
$$
and it follows that $\Vert \Utor(\paut(\nu))\Vert=1.$
Since the support of  $\paut(\nu)$ is the subgroup $\Delta$ 
of finite index  in $\Ga,$ this completes the proof of Theorem~\ref{Theo1}. $\bsq$

\begin{remark}
\label{Rem-Quantitatif}
The proof of Theorem~\ref{Theo1} we gave above
is not effective:  it does not give, for a probability measure
$\mu$ on $\Aut(\nil),$ a bound for the norm of 
$\mu$ under $U^\H$  in terms  of  the norm of $\mu$   
under $\Utor$  and/or  other ''known" representations
of the group generated by $\mu,$ such as  the regular representation. 
In the following example, such an explicit bound is given.
The crucial tool  we use  is Mackey's tensor product theorem
This approach succeeds here  because of the special features
of the example  and  we could not use it to get explicit bounds 
  in the most general case. 
  \end{remark}

 \begin{example}
 \label{Exo-Libre}

 Let ${\frak n}={\frak n}_{3,2} $ be  the free 2-step nilpotent Lie algebra on $3$ generators and
 let $N=N_{3,2}$ be  the corresponding connected and simply-connected nilpotent Lie group.
 As is well-known, ${\frak n}$   is a   6-dimensional Lie algebra 
 which can be realized as follows. 
 Set $V_1=V_2= \RRR^3$ and define a Lie bracket  on 
 the vector space ${\frak n}=V_1 \oplus V_2$ by
 $$
[(X_1, Y_1) , (X_2,Y_2)] = (0, 2 (X_1\wedge X_2)) \tous X_1, X_2, Y_1, Y_2\in \RRR^3,
$$
 where $X_1\wedge X_2$ denotes the usual cross-product on $\RRR^3.$
 (The factor 2 appears here just for computational ease.)
 The centre of $\frak n$ is $V_2$  and the 
 Lie group 
 $N$ is $V_1 \oplus V_2$ with the product
 $$
 (x_1,y_1)(x_2,y_2)= (x_1+x_2, y_1+y_2+x_1\wedge x_2) \tous x_1, x_2, y_1, y_2\in \RRR^3,
 $$
 so that the exponential mapping $\exp: {\frak n}\to N$ is the identity.
  
  Observe that, for a matrix $A\in GL_3(\RRR),$ we have 
 $$
 A(X\wedge Y)= (\det A) (A^t)^{-1}(X\wedge Y) \tous X,Y\in \RRR^3.
 $$
 The automorphism group $\Aut (N)$ of $N$ is the subgroup  of $GL_6(\RRR)$
 of matrices $g_{A,B}$ of the form
\[
g_{A,B}=\left(
\begin{array}{cc}
 A& 0\\
 B&  (\det A) (A^t)^{-1}
\end{array}\
\right)
\]
with $A\in GL_3(\RRR)$ and $B\in M_3(\RRR),$
so  that $\Aut (N)$ is isomorphic to the semi-direct product $GL_3(\RRR)\ltimes M_3(\RRR)$
for the action of $GL_3(\RRR)$ by left multiplication on the  vector space $M_3(\RRR)$ of $3\times 3$-real matrices.

 We will  identify $\frak n$ with $\frak n^*$ by means of the
 standard scalar product  $(X,Y)\mapsto \langle X \vert Y\rangle$
 on $\RRR^6.$
  For $(x,y)$ and  $(X_0,Y_0) $ in $V_1 \oplus V_2$, we compute
  that $ {\rm Ad}^* (x,y)(X_0,Y_0)= (X_0 + x\wedge Y_0, Y_0 ).$
 It follows that the coadjoint orbit of $(X_0,0)$ is $\{(X_0, 0)\}$ and, for $Y_0\neq 0,$ 
 we have
 \begin{align*}
  {\rm Ad}^* (N)(X_0,Y_0)&=\left\{(X_0 +x\wedge Y_0, Y_0 ) \ :\ x\in \RRR^3\right\}\\
 &=\left\{(X_0 +Y, Y_0 ) \ :\ Y\in (\RRR Y_0)^\perp\right\}\\
 &=\left\{(\la_0 Y_0 +Y, Y_0 ) \ :\ Y\in (\RRR Y_0)^\perp\right\}.\\
  \end{align*} 
  for $\la_0= \langle X_0\vert Y_0\rangle/\Vert Y_0\Vert^2.$
 The orbits which are not reduced to  singletons 
  are  therefore the two-dimensional affine planes
 $$
 {\cal O}_{\la_0, Y_0} = \left\{(\la_0 Y_0 +Y, Y_0 ) \ :\ Y\in (\RRR Y_0)^\perp\right\}, 
 $$
 parametrized by $(\la_0, Y_0)\in \RRR\times (\RRR^3\setminus \{0\}).$

The subgroup $\Lambda= \ZZ^3\oplus \ZZ^3$ is a lattice in $N.$
The group $\Aut(\nil)$   is the  subgroup of $\Aut(N)$ of automorphisms $g_{A,B}$ as above given by matrices 
$A\in GL_3(\ZZ)$ and $B\in M_3(\ZZ).$

Fix $(\la_0, Y_0)\in \RRR\times (\RRR^3\setminus \{0\}).$
The irreducible  unitary representation $\pi_{\la_0, Y_0}$
of $N$ corresponding to the coadjoint orbit ${\cal O}_{\la_0, Y_0}$
appears in the decomposition of $L^2(\nil)$ into $N$-isotypical
components if and only if 
${\cal O}_{\la_0, Y_0}\cap  (\ZZ^3\oplus \ZZ^3)\neq \emptyset.$
This is the case if and only if $Y_0\in \ZZ^3\setminus \{0\}$
and $\la_0 \in \Vert Y_0\Vert^{-2} \Delta_{Y_0},$ where $\Delta_{Y_0}$ is the 
subgroup of $\ZZ$ consisting of the integers $m$  for which 
$m Y_0\in (\RRR Y_0)^\perp +  \Vert Y_0\Vert^2 \ZZ^3.$

Let $\Ga$ be a subgroup of $\Aut(\nil).$ For simplicity, 
we assume that  $\Ga$ consists only of automorphisms $g_{A,0}$
with $A\in SL_3(\ZZ).$  
We identify $\Ga$ with  a subgroup of $SL_3(\ZZ).$
For $A\in SL_3(\ZZ),$ we have 
$$
A({\cal O}_{\la_0, Y_0})= {\cal O}_{\beta_0, (A^t)^{-1}(Y_0)}\qquad\text{for}\qquad
\beta_0={ \la_0\Vert Y_0\Vert^2}/{\Vert (A^t)^{-1}(Y_0)\Vert^2}.
$$
The stabilizer  $\Ga_{\la_0,Y_0}$ of ${\cal O}_{\la_0, Y_0}$ 
(which is the stabilizer of $\pi_{\la_0, Y_0}$)  in $\Ga$
is therefore
\begin{align*}
\Ga_{\la_0,Y_0}&= \{A\in \Ga \ : \ A^{t} Y_0= Y_0 \},
\end{align*}
and is isomorphic to a subgroup of the semi-direct product $SL_2(\ZZ)\ltimes \ZZ^2.$

Let $\H_{\la_0,Y_0}$ be the isotypical component of 
$L^2(\nil)$ associated to  $\pi_{\la_0, Y_0}$
and $U_{\la_0, Y_0}$ the corresponding representation  of 
$\Ga$ (see Section \ref{S7}); we know that 
$U_{\la_0, Y_0}$
is equivalent  to $ \ind_{\Ga_{\la_0,Y_0}}^{\Ga} V_{\la_0,Y_0}$
for a representation $V_{\la_0,Y_0}$ of $\Ga_{\la_0,Y_0}$ which is 
strongly $L^p$ modulo its projective kernel $P_{\la_0, Y_0}$
for some real number $p\geq 1.$

The projective kernel $P_{\la_0, Y_0}$ of $V_{\la_0,Y_0}$ coincides with
the subgroup of $\Ga$ of all automorphisms
which fixes every point $(X,Y)\in {\cal O}_{\la_0,Y_0};$
hence,  $P_{\la_0, Y_0} =\{I\}$ if $\la_0=0$ and 
$$
P_{\la_0, Y_0}=\{A\in \Ga \ : \ A^{t} Y_0= Y_0 \quad \text{and}\quad AY=Y \quad\text{ for all}\quad Y\in (\RRR Y_0)^\perp\} 
$$
 if $\la_0\neq 0.$

Every $\pi_{\la_0, Y_0}$ factorizes to a representation of 
a quotient of $N$ of dimension $3$ or $4,$
which is isomorphic to the Heisenberg group $H_3$
or to the direct product  $H_3\oplus \RRR.$ 
It follows that the representation $V_{\la_0,Y_0}$ of $\Ga_{\la_0,Y_0}$  is 
strongly $L^{6+\eps}$ modulo $P_{\la_0, Y_0}$ for every $\eps>0$
(see \cite{BeHe} and \cite{HoMo}).
 
 Set $\Ga_0=\Ga_{\la_0,Y_0}, V=V_{\la_0,Y_0},$ and $U=U_{\la_0,Y_0}.$
 We claim that $U^{\otimes 4}$ is weakly contained in the regular representation $\la_\Ga$ of $\Ga$
on $\ell^2(\Ga).$

  Indeed, by Mackey's tensor product theorem, 
 $U^{\otimes 4}$ is weakly equivalent to the direct sum
 $$
\bigoplus_{\ga_1,\ga_2,\ga_3\in\Ga}  \ind_{\Ga_0\cap \Ga_{0}^{\ga_1} \cap \Ga_{0}^{\ga_2} \cap \Ga_{0}^{\ga_3} }^{\Ga}
 \left(V\otimes V^{\ga_1}\otimes V^{\ga_2}\otimes V^{\ga_3}\right),
  $$
  where $V\otimes V^{\ga_1}\otimes V^{\ga_2}\otimes V^{\ga_3}$  is the tensor product
  of the restrictions of   $V, V^{\ga_1},V^{\ga_2}$ and $V^{\ga_3}$
  to $\Ga_0\cap \Ga_{0}^{\ga_1} \cap \Ga_{0}^{\ga_2} \cap \Ga_{0}^{\ga_3}.$ 
   Fix $\ga_1,\ga_2, \ga_3\in\Ga.$  Observe that 
   $\Ga_0\cap  \Ga_{0}^{\ga_1} \cap \Ga_{0}^{\ga_2} \cap \Ga_{0}^{\ga_3}$
   is the subgroup of elements $\ga\in \Ga$ 
   such that $\ga^t$ fixes  $Y_0, \ga_1^t(Y_0), \ga_2^t(Y_0)$ and $\ga_3^t(Y_0).$
   Set 
   $$U_{\ga_1,\ga_2\ga_3}=\ind_{\Ga_0\cap \Ga_{0}^{\ga_1} \cap \Ga_{0}^{\ga_2} \cap \Ga_{0}^{\ga_3} }^{\Ga}
 \left(V\otimes V^{\ga_1}\otimes V^{\ga_2}\otimes V^{\ga_3}\right).
 $$ 
 Two cases can occur.
   
\medskip \noindent
$\bullet$ \emph{First case:} There exists some $i\in \{1,2,3\}$ such that 
$\ga_i^t(Y_0)$ is not a multiple of $Y_0.$  
Then every element $\Ga_0\cap  \Ga_{0}^{\ga_1} \cap \Ga_{0}^{\ga_2} \cap \Ga_{0}^{\ga_3}$
fixes pointwise a plane in $\RRR^3;$
it follows that  $\Ga_0\cap  \Ga_{0}^{\ga_1} \cap \Ga_{0}^{\ga_2} \cap \Ga_{0}^{\ga_3}$ is abelian and hence amenable. Therefore
$U_{\ga_1,\ga_2\ga_3}$ is weakly contained in  $\la_\Ga.$

\medskip \noindent
$\bullet$ \emph{Second case:}
Every  $\ga_i^t(Y_0)$ is  a multiple of $Y_0,$
that is, every $\ga_i$ belongs to the 
subgroup  $ H= \left\{ \ga\in \Ga\ : \ \ga^t(Y_0) \in \{\pm Y_0\}\right\}.$
 Observe that $\Ga_0$ is a subgroup of $H$ of index at most 2.
 It can be checked that the  subgroup
 $P=P_{\la_0, Y_0},$ which is normal  in $\Ga_0,$ is normal
 in $H.$ It follows that the restriction 
 of  $ V^{\ga_i}$ to  $\Ga_0\cap \Ga_{0}^{\ga_1} \cap \Ga_{0}^{\ga_2} \cap \Ga_{0}^{\ga_3}$ 
 is strongly  $L^{6+\eps} $ modulo $P$ for every  $i\in \{1,2,3\}.$ 
Hence,  $V\otimes V^{\ga_1}\otimes V^{\ga_2}\otimes V^{\ga_3}$
 is strongly $L^{2} $ modulo $P$ and hence contained  in a multiple
 of  $\ind _P^{\Ga_0\cap \Ga_{0}^{\ga_1} \cap \Ga_{0}^{\ga_2} \cap \Ga_{0}^{\ga_3}}\la.$
Since $P$ is amenable, it follows that 
 $U_{\ga_1,\ga_2,\ga_3}$ is  weakly contained in  $\la_\Ga.$ 
 As a consequence,  we see that  $U^{\otimes 4}$ is weakly contained in  $\la_\Ga.$
 
 Let $\mu$ be a probability  measure on $\Ga.$
 It follows from what we have seen that 
$$
\Vert U^\H(\mu)\Vert\leq \Vert \lambda_\Ga(\mu)\Vert^{1/4},
$$
where  $U^\H$ is the Koopman representation of $\Ga$
on $\H = L^2(T)^\perp.$ As a consequence, we have
$$
\Vert U^0(\mu)\Vert\leq \max\{\Vert \lambda_\Ga(\mu)\Vert^{1/4},\Vert \Utor(\mu)\Vert \},
$$
where  $U^0$ and $\Utor$ are the Koopman representations of $\Ga$
on  $L^2_0(\nil)$ and $ L^2_0(T).$
The same estimate was established  in \cite[Corollary~3]{BeHe}
in the case where $N$ is  the  Heisenberg group $H_3.$
 \end{example}

\section{Proof of Theorem~\ref{Theo2}}
\label{S-ErgMixing}
Let $H$ be a  subgroup  of  $\Affnil.$
The following elementary proposition
shows that ergodicity of $H$ on $T$ is inherited by every subgroup
of finite index in $H.$
\begin{proposition}
\label{Pro-FiniteIndexErg}
Let $H$ be a subgroup of $\Aff(T)$  and $H_1$ a subgroup of finite index in $H.$
 Assume  that 
$L^2_0(T)$ contains a non-zero $H_1$-invariant function. 
Then $L^2_0(T)$ contains a non-zero $H$-invariant function. 
\end{proposition}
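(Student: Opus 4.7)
The plan is to reduce first to the case where $H_1$ is normal in $H$, then to extract from any non-zero $H_1$-invariant $f\in L^2_0(T)$ a continuous non-constant $H_1$-invariant function $v$, and finally to conclude by a classical integrality argument for a finite group acting by algebra automorphisms on a commutative algebra.

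First, I would replace $H_1$ by its normal core $H_1^\circ:=\bigcap_{h\in H}hH_1h^{-1}$. This subgroup is normal in $H$, still of finite index (as an intersection of finitely many finite-index conjugates of $H_1$), and contained in $H_1$; hence $L^2_0(T)^{H_1^\circ}\supset L^2_0(T)^{H_1}\neq\{0\}$, and from now on we may assume that $H_1$ is normal in $H$.

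Next, take any non-zero $f\in L^2_0(T)^{H_1}$ and expand it in Fourier series $f=\sum_\chi c_\chi\chi$. Using the formula $V(\gamma,a)\chi=\chi(a)\chi^\gamma$ recalled in the proof of Proposition~\ref{Pro-AffAut}, the $H_1$-invariance of $f$ translates into $c_{\chi^\gamma}=c_\chi\chi(a)$ for all $(\gamma,a)\in H_1$; in particular $|c_\chi|$ is constant along each $H_1$-orbit in $\widehat{T}\setminus\{1_T\}$, and the $L^2$-constraint $\sum|c_\chi|^2<\infty$ forces every orbit carrying non-zero coefficients to be finite. Selecting a non-zero summand $v$ of $f$ supported on a single such finite orbit $\mathcal{O}_1$, one obtains $v=\sum_{\chi\in\mathcal{O}_1}c_\chi\chi$, a finite trigonometric polynomial, hence continuous and bounded, lying in $L^\infty(T)^{H_1}$. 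Since $v\in L^2_0(T)\setminus\{0\}$, $v$ cannot be a constant function; and being continuous and non-constant on the connected torus $T$, its image $v(T)$ is a connected subset of $\mathbf{C}$ with at least two points, hence uncountable.

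Finally, set $G:=H/H_1$ and $A:=L^\infty(T)^{H_1}$. The finite group $G$ acts on the commutative $\mathbf{C}$-algebra $A$ by algebra automorphisms, and for every $a\in A$ the monic polynomial $P_a(X):=\prod_{g\in G}(X-g\cdot a)\in A[X]$ has symmetric coefficients, hence lies in $A^G[X]=L^\infty(T)^H[X]$, and satisfies $P_a(a)=0$. Suppose, for contradiction, that $L^2_0(T)^H=\{0\}$; then $L^\infty(T)^H=\mathbf{C}$ and hence $P_v\in\mathbf{C}[X]$ is an ordinary polynomial of degree $|G|$, which forces $v(x)$ to lie in the finite root set of $P_v$ for almost every $x\in T$, contradicting the uncountable image of $v$ established above. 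Therefore $L^\infty(T)^H$ must contain a non-constant function, and subtracting its integral produces a non-zero element of $L^2_0(T)^H$. The main technical point is the observation that a non-zero single-$H_1$-orbit summand $v$ is automatically a finite trigonometric polynomial, which rests on the finiteness of $\mathcal{O}_1$ forced by the $L^2$-norm constraint.
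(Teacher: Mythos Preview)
Your proof is correct and takes a genuinely different route from the paper's. The paper argues directly: from the Fourier expansion of an $H_1$-invariant function one extracts a character $\chi\in\widehat T\setminus\{1_T\}$ with finite $\paut(H_1)$-orbit, observes that the subgroup $H_2:=H_1\cap\paut^{-1}(\Ga_\chi)$ has finite index in $H$ and fixes $\chi$, and then writes down the averaged function $\sum_{s\in H/H_2}\Utor(s)\chi$ as the desired $H$-invariant element of $L^2_0(T)$. Your argument instead passes to the normal core, produces a \emph{continuous} non-constant $H_1$-invariant function $v$ (by restricting to a single finite orbit of characters), and then invokes the classical integrality fact that every element of $L^\infty(T)^{H_1}$ satisfies a monic polynomial with coefficients in $L^\infty(T)^H$; if the latter were reduced to $\CCC$, the continuous function $v$ would have finite image, contradicting connectedness of $T$. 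The paper's approach is shorter and yields an explicit invariant function, but its non-vanishing step is left implicit (and indeed requires some care, since for a given $\chi$ the sum $\sum_{s\in H/H_2}\Utor(s)\chi$ can vanish when several coset representatives send $\chi$ to scalar multiples of itself). Your integrality argument is a bit longer but is entirely self-contained, sidesteps any non-vanishing issue, and applies verbatim to any finite-index inclusion of groups acting by algebra automorphisms on $L^\infty$ of a connected probability space.
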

\begin{proof}
By standard arguments involving Fourier series, there exists
a unitary character $\chi$ in $\widehat{T}\setminus\{1_T\}$ 
with a finite orbit under $\paut(H_1)$ and such that 
$H_2:=H_1\cap \paut^{-1}(\Ga_\chi)$ fixes $\chi,$ 
where $\Ga_\chi$ is the stabilizer of $\chi$ in $\Aut(T).$ 
Then $H_2$
has finite index in $H$ and 
$$\sum_{s\in H/ H_2}  \Utor(s)\chi$$ is a 
non-zero $H$-invariant function in $L^2_0(T).\bsq$
\end{proof}

\n
\textbf{Proof of (i) in Theorem~\ref{Theo2}}
\medskip

 As is well-known,
the action of  a group $H$ on a probability space $(X,\nu)$ is weakly mixing 
if and only if the diagonal action of $H$ on $(X\times X,\nu\otimes \nu)$ 
is ergodic. Since $T\times T$ is the maximal factor torus of $(\nil)\times(\nil),$
we only have to prove the statement about ergodicity.

So, let $H$ be a (not necessarily countable) subgroup of $\Affnil$ acting ergodically on $T.$
We have to prove that $H$ acts ergodically on $\nil.$
We can assume that $N$ is not abelian, otherwise there is nothing to prove.

Set $\Ga=\paut(H).$ Recall  from Sections~\ref{S7} and  \ref{S8} that 
we have  orthogonal decompositions  
into $\Ga\ltimes N$ -invariant subspaces $L^2(\nil)=L^2(T)\oplus \H$ 
and 
$$
 \H= \bigoplus_{i} \H_{\Sigma_i},
 $$
 such that   the representation $U_i$  of $\Ga\ltimes N$  on $\H_{\Sigma_i}$ 
is equivalent to an induced representation  $ \ind_{\Ga_{\pi_i}\ltimes N}^{\Ga\ltimes N} V_{_i},$
where $\Ga_{\pi_i}$ is the stabilizer in $\Ga$ of some  $\pi_i\in \Sigma_i.$ 
In view of the previous proposition, it suffices to prove the following

\medskip
\noindent
\textbf{Claim:}  Assume that, for some $i,$
the   subspace $\H_{\Sigma_i}$ contains a non-zero $H$-invariant function.
 Then   $L^2_0(T)$ contains a non-zero $H_1$-invariant function
for some subgroup $H_1$ of finite index in $H.$
\medskip

To show this,  set $\pi=\pi_i$, $\Sigma_\pi=\Sigma_i,$ $U_\pi=U_i,$
and $V_\pi=V_i.$  Let    $S$ be a set of representatives
for the cosets in 
$$\Ga/\Ga_{\pi}\cong (\Ga\ltimes N)/ (\Ga_\pi\ltimes N)$$
 with $e\in S.$ 
Then, by the definition of an induced representation,  $\H_{\Sigma_\pi}$ is an orthogonal sum 
$$ \H_{\Sigma_\pi}= \bigoplus_{s\in S}\K^s,$$
where  $\K$  carries the $\Ga_{\pi}\ltimes N$-representation $V_{\pi}$ 
 and where  $\K^s=U_\pi(s)\K.$
 It follows from this that there exists a non-zero  function in $\K$
which is invariant under $H\cap (\Ga_{\pi}\ltimes N)$
 and that $\Ga_{\pi}$ has finite index in $\Ga.$ 
 
 Upon replacing $H$ by the subgroup 
 of finite index  $H\cap (\Ga_{\pi}\ltimes N),$
 we can assume that $H$ is contained in  $\Ga_{\pi}\ltimes N$.
 
Let $L_\pi$ be the connected component of $\Ker (\pi)$
and $\overline N=N/L_\pi.$  Observe that $\overline N$ is not abelian,
since $\pi$ is not a unitary character of $N.$
As seen in Section~\ref{S5},  
the action of $\Ga_{\pi}\ltimes N$ 
on $\H_\pi$ factorizes through 
the quotient nilmanifold $\overline \La\bs \overline N.$
Hence, we can assume that $L_\pi$ is trivial.

 By the proof of Proposition~\ref{Pro-FixAut}, 
 there exists a real number $p\geq 1$ such that the representation $V_\pi$ of 
 $\Ga_\pi\ltimes N$ is strongly $L^p$  modulo  $\Delta,$
 where  $\Delta$ is the normal subgroup
 $$\Delta= \{(\Ad(x), x^{-1} z)\ :  x\in  \La, z\in Z(N)  \}.$$
  We claim that $H\cap \Delta$ has finite index in $H.$

 Indeed, let $R=\overline{H\Delta}$ be the closure of $H\Delta$ in  $\Ga_\pi\ltimes N$.
 Then  the restriction of $V_\pi$ to 
 $R$ is strongly $L^p$  modulo  $\Delta.$
 
 Observe that  
 $(\Ad(x), x^{-1} z)\in \Delta$ acts as
 multiplication with $\la_\pi(z)$ on $\H_\pi,$
 where $\la_\pi$ is the central character  of $\pi.$
 Let $\xi$ a non-zero $V_\pi(H)$-invariant function in $\K.$
 The function  $x\mapsto \vert  \langle V_\pi(x)\xi , \xi\rangle\vert$
 is non-zero, belongs to $L^p(R/\Delta),$ and is $R$ invariant.
 It follows that $R/\Delta$ is  a compact group.
 
 Let $R_0$ be the connected component of $R.$ Since 
 $R$ is a Lie group,  $R_0$ is open in $R.$   
 It follows that  $R_0 \Delta/\Delta$ is an open (and hence closed)
 subgroup of   $R/\Delta.$
 Since $R/\Delta$ is compact, we conclude that $R_0\Delta /\Delta \cong R_0/(R_0\cap \Delta)$ 
 is a subgroup of finite index in  $R/\Delta.$
 
 On the other hand, observe that   $R_0\subset N,$ 
 since $R\subset \Ga_\pi\ltimes N$ and since  $\Ga_\pi$  is discrete.
 Observe also that 
 $$R_0\cap \Delta = R_0 \cap Z(N),$$
 since $Z(N)$ is connected (as $N$ is simply connected). It follows that
 $R_0\cap \Delta$ is  a connected
 subgroup of the nilpotent simply connected Lie group $R_0.$
 But $R_0/(R_0\cap \Delta)$ is compact. Hence,
 $R_0/(R_0\cap \Delta)$  is trivial.
 As a consequence, we see that  $R/\Delta$ is finite.
 This shows that $H\cap \Delta$ has finite index in $H.$ 
 Therefore, upon replacing $H$ by  $H\cap \Delta$, we can assume that $H\subset \Delta.$ 
 
The centre $Z(N)$ being a rational
subgroup of $N,$   the subgroup $\overline{\La}=\La Z(N)$ 
of  the   nilpotent Lie group $\overline N= N/ Z(N)$ is a lattice. 
Observe that $\overline N$  is non-trivial, since 
$N$ is non-abelian. 
 The group $\Delta$ acts  trivially on the factor nilmanifold 
 $\overline \La\bs \overline N$ and hence  on the associated
 torus $\overline{T}$. 
 Since  $\overline{T}$ is a $\Delta$-invariant factor torus of $T,$
 it follows that   the action of $H$ on $T$  is not ergodic.$\bsq$

\bigskip\n
\textbf{Proof of (ii) in Theorem~\ref{Theo2}}
\medskip

Let $H$ be a subgroup of $\Autnil$ with a strongly mixing action on $T.$
We have to prove that the action of $H$  on $\nil$ is strongly mixing.

With the notation as in the proof of Part (i) above,
 the Koopman representation 
$U$ of $H$ on $\H$ decomposes as a direct
sum $U\cong \oplus_{i} U_i,$
where $U_i$ equivalent to an induced representation  $ \ind_{H_{\pi_i} }^{H} V_{i}.$
It suffices to prove that, for every  $i,$
the matrix coefficients of $U_i$  belong to $c_0(H).$ 
This will follow if we show that the matrix coefficients of $V_i$ belong
to  $c_0(H_{\pi_i}).$ 

Set $\pi=\pi_i$ and $V_\pi=V_i.$ Let $L_\pi$ be the connected component of $\Ker(\pi)$
and $\overline \La\bs \overline N$ the corresponding $H_\pi$-invariant 
factor nilmanifold. Since $H_\pi$ is contained in $ \Autnil,$  the projective kernel 
$P$ of $V_\pi$  coincides with the kernel  of the homomorphism
$\vfi: H_\pi\to \Aut (\overline \La\bs \overline N)$, by  Proposition~\ref{Pro-ProjKernNilman}.

We claim  that  $P=\Ker(\vfi)$ is finite.
Indeed, otherwise   the matrix coefficients of the Koopman representation
of $H_\pi$ on the maximal factor torus $\overline T$  of $\overline \La\bs \overline N$
would  not belong to $c_0(H_\pi)$ and this would imply that the  action of $H_\pi$
and hence of $H$ on $T$ is not strongly mixing.

Since $P$ is finite, $V_\pi$  is strongly  $L^p$ for some $p\geq 1.$ 
It follows that the matrix coefficients of $V_\pi$  belong 
 to $c_0(H_\pi).$ 
This  finishes the proof of  Theorem~\ref{Theo2}.$\bsq$

\noindent
{\bf Address}

\noindent
\obeylines 
{Bachir Bekka and Yves Guivarc'h
IRMAR, Universit\'e de  Rennes 1, 
Campus Beaulieu, F-35042  Rennes Cedex
 France}

\noindent
E-mail : bachir.bekka@univ-rennes1.fr, yves.guivarch@univ-rennes1.fr


\begin{thebibliography}{BeMa00}

\bibitem[Anan03]{Claire} C. Anantharaman-Delaroche.
On spectral characterizations of amenability.
\newblock\emph{Israel J. Math.} {\bf 137}, 1--33.(2003).


\bibitem[AuGH63]{AGH} L. Auslander, L. Green, F. Hahn.
 \newblock \emph{Flows on homogeneous spaces,}
Annals of Mathematical Studies, Princeton 1963.


\bibitem[Bekk90]{Bekka} B. Bekka. Amenable unitary representations of locally compact groups.
\newblock\emph{Invent. Math. } {\bf 100}, 383--401 (1990).



\bibitem[BeHV08]{BHV} B. Bekka, P. de la Harpe, A. Valette. \newblock \emph{Kazhdan's Property (T),}
Cambridge University Press 2008.

\bibitem[BeGu06]{BeGui} B. Bekka,  Y. Guivarc'h. A spectral gap property for random walks under unitary representations.
\newblock\emph{ Geom. Dedicata} {\bf 118}, 141--155 (2006).


\bibitem[BeHe10]{BeHe} B. Bekka, J-R. Heu. 
Random products of automorphisms of Heisenberg nilmanifolds 
and Weil's representation.
\newblock \emph{Ergod. Th\& Dynam.Sys.}, to appear.






\bibitem[CoGu74]{CoGu} J-P. Conze and Y. Guivarc'h. 
Remarques sur la distalit\'e dans les espaces vectoriels.
\newblock \emph{C.R. Acad. Sc. Paris} {\bf 278},  S\'erie A, 1083--1086 (1974)       


\bibitem[CoGu11]{CoGu2} J-P. Conze and Y. Guivarc'h. 
Ergodicity of group actions and spectral gap, application to random walks
and Markov shifts.
\newblock   Preprint 2011                     

                  
\bibitem[CoLe11]{CoLe} J-P. Conze, S. Le Borgne. 
Quenched central limit theorem for random walks with a spectral gap.
\newblock Preprint 2011



 
\bibitem[CoGr89]{CoGr} L. Corwin and F. Greenleaf. 
\newblock \emph{Representations of nilpotent Lie groups and their applications},
\newblock %
 Cambridge University Press 1989.
 
 
 \bibitem[Dixm69]{Dixmier} J. Dixmier. 
\newblock \emph{Les $C^*$-alg\`ebres et leurs repr\'esentations},
\newblock %
 Gauthier-Villars 1969.

\bibitem[Dufl72]{Duflo} M. Duflo.
Sur les extensions des repr\'esentations irr\'eductibles des groupes de Lie nilpotents.
\newblock \emph{Ann. Sci. \'Ecole Norm. Sup.} {\bf 5}, 71--120 (1972).

  \bibitem[Eyma72]{Eymard} P. Eymard.
\newblock \emph{Moyennes invariantes et repr\'esentations unitaires,}
\newblock Lecture Notes in Mathematics {\bf 300}, Springer 1972. 



\bibitem[Fish]{Fisher} D.Fisher.
Groups acting on manifolds: around the Zimmer program.
\emph{ to appear in Festschrift for R.J.Zimmer},
arXiv:0809.4849v2 [math.DS]

\bibitem[Foll89]{Folland} G.B. Folland.
\newblock \emph{Harmonic analysis in phase space,}
\newblock Annals of Mathematics Studies {\bf 122}, Princeton University Press 1989.


\bibitem[FuSh99]{FuSh} A. Furman, Y. Shalom.
 Sharp ergodic theorems for group actions and strong ergodicity.
 \newblock \emph{Ergod. Th. \& Dynam. Sys.} {\bf 19}, 1037--1061 (1999).


 \bibitem[Furs76]{Furst}    H. Furstenberg.
 A note on Borel's density theorem.
 \newblock \emph{Proc. Amer. Math. Soc.} {\bf 55}, 209--212 (1976).
209--212.


\bibitem[GoNe10]{GoNe}  A. Gorodnik and  A. Nevo.
The ergodic theory of lattice subgroups. 
\newblock Annals of Mathematics Studies {\bf 172},  Princeton University Press 2010.

\bibitem[Guiv05]{Yves} Y. Guivarc'h. Limit theorems for random walks and products of random matrices. \newblock 
In:\emph{Probability measures on groups: recent directions and trends}, 255--330, Tata Inst. Fund. Res., Mumbai, 2006.

\bibitem[Herz70]{Herz} C. Herz.
Sur le ph\'enom\`ene de Kunze-Stein. 
\newblock \emph{C. R. Acad. Sci. Paris S\'er. A--B }{\bf 271},491--493 (1970).




\bibitem[Howe71]{Howe1} R. Howe.
On Frobenius reciprocity for unipotent algebraic groups over $Q$.
\newblock \emph{Amer. J. Math.}{\bf 93}, 163--172 (1971). 

\bibitem[Howe73]{Howe2} R. Howe.
On the character of Weil's representation.
\newblock \emph{Trans. Amer. Math. Soc.} {\bf 177}, 287--298 (1973). 

\bibitem[Howe82]{Howe3} Howe, R. On a notion of rank for unitary representations of the classical groups.  
\newblock In:\emph{Harmonic analysis and group representations}. 223--331, Liguori, Naples, 1982.



\bibitem[HoMo79]{HoMo} R. Howe, C. C. Moore.
 Asymptotic properties of unitary representations.
 \newblock \emph{J. Funct. Anal.} {\bf 32}, 72--96 (1979).

\bibitem[HoTa92]{HoTa} R. Howe, E.C. Tan.
\newblock\emph{Non-abelian harmonic analysis,}
\newblock Springer 1992.

\bibitem[Hum81]{Humphreys} J. E. Humphreys.
\newblock \emph{Linear algebraic groups}, 
\newblock %
Springer Verlag, 1981.


\bibitem[JoSc87]{JoSch} V. Jones, K. Schmidt.
Asymptotically invariant sequences and approximate finiteness.
\newblock \emph{Amer. J. Math}{\bf 109}, 91--114 (1987).  



\bibitem[JuRo79]{JuRo} A. del Junco, J. Rosenblatt.
 Counterexamples in ergodic theory.
 \newblock \emph{Math. Ann.} {\bf 245}, 185--197 (1979).
.




\bibitem[Kiri62]{Kirillov} A. A. Kirillov. Unitary representations of nilpotent Lie groups. 
 \newblock \emph{Russian Math. Surveys} {\bf 17}, 53--104 (1962).


\bibitem[Lion79]{LionVergne} G. Lion.
Extensions de repr\'esentations de groupes de Lie nilpotents et indices de Maslov.
\newblock\emph{C. R. Acad. Sci. Paris}, S\'er. A-B {\bf 288}, A615--A618. (1979) 


\bibitem[Lubo94]{Lubotzky} A. Lubotzky.
 \newblock \emph{Discrete groups, expanding graphs and invariant measures,}
  \newblock Birkh\"auser 1994.

\bibitem[Mack58]{Mackey} G.W. Mackey.
Unitary representations of group extensions I.
\newblock \emph{Acta Math.}{\bf 99}, 265--311 (1958). 

\bibitem[Mack76]{Mackey-Livre} G.W. Mackey.
\newblock\emph{The theory of unitary group representations,}
\newblock Chicago Lectures in Mathematics, The University of Chicago Press 1976.


\bibitem[Moor65]{Moore}  C. C. Moore.
 Decomposition of unitary representations defined by discrete
subgrous of nilpotent Lie groups.
 \newblock \emph{Ann. Math.} {\bf 82}, 146--182 (1965).

\bibitem[Nevo98]{Nevo} A. Nevo. 
\newblock Spectral transfer and pointwise ergodic theorems for semisimple
Kazhdan groups, %
\newblock \emph{Math. Res.Letters} {\bf 5}, 305-325, 1998.



\bibitem[Parr69]{Parry1} W. Parry.
Ergodic properties of affine transformations and flows on nilmanifolds.
\newblock \emph{Amer. J. Math.}{\bf 9}, 757--771 (1969).

\bibitem[Parr70-a]{Parry2} W. Parry.
 Dynamical systems on nilmanifolds. 
\newblock \emph{Bull. London Math. Soc.} {\bf 2}, 37--40 (1970). 

\bibitem[Parr70-b]{Parry3} W. Parry.
 Spectral analysis of $G$-extensions of dynamical systems
\newblock \emph{Topology.} {\bf 9}, 217--224 (1970). 







 
\bibitem[Popa08]{Popa} S. Popa.
On the superrigidity of malleable actions with spectral gap.
\newblock \emph{J. Amer. Math. Soc.} {\bf 21}, 981--1000 (2008). 

 
\bibitem[Ragh72]{Raghunathan} M.S. Raghunathan.
\newblock \emph{Discrete
subgroups of Lie groups}, 
\newblock %
Springer-Verlag, 1972.

\bibitem[Rich71]{Richardson} L. Richardson.
Decomposition of the $L\sp{2}$-space of a general compact nilmanifold.
\newblock \emph{Amer. J. Math}{\bf 93}, 173--190 (1971).  


\bibitem[Sarn90]{Sar} P. Sarnak. 
\newblock \emph{Some applications of modular forms},
\newblock Cambridge University Press, 1990.

\bibitem[Schm80]{Schmidt1} K. Schmidt.
Asymptotically invariant sequences and an action of $SL(2,\mathbf Z)$ on the 2-sphere.
\newblock \emph{Israel J. Math}{\bf 37}, 193--208 (1980).  



\bibitem[Schm81]{Schmidt2} K. Schmidt.
Amenability, Kazhdan's property T, strong ergodicity and invariant 
means for ergodic group-actions.
\newblock \emph{Ergod. Th. \& Dynam.Sys.}{\bf 1}, 223--236 (1981).  



\bibitem[StTa87]{Stewart} I. N. Stewart, D.O. Tall.  
\newblock \emph{Algebraic nummber theory,}
\newblock %
Chapman \& Hall, 1987.



\bibitem[Zimm84]{Zimmer} R.J. Zimmer.  
\newblock \emph{Ergodic theory and semisimple groups,}
\newblock %
Birkh\"auser, 1984.




\end{thebibliography}
\end{document}